\definecolor{todo}{rgb}{1,0,0}
\definecolor{answer}{rgb}{0,0,1}
\definecolor{new}{rgb}{1,0,1}
\definecolor{conditional}{rgb}{0,1,0}
\definecolor{e-mail}{rgb}{0,.40,.80}
\definecolor{reference}{rgb}{.20,.60,.22}
\definecolor{mrnumber}{rgb}{.80,.40,0}
\definecolor{citation}{rgb}{0,.40,.80}
\theoremstyle{remark}
\newtheorem{remark}[theorem]{Remark}
\numberwithin{equation}{section}
\theoremstyle{definition}
\newtheorem{algorithm}{}
\newtheorem{step}{Step}
\def\beq{\begin{equation}}
\def\eeq{\end{equation}}
\def\calD{{\mathcal D}}
\def\calE{{\mathcal E}}
\def\Ob{{\mathrm Ob}}
\def\Z{{\mathbb Z}}
\def\Q{{\mathbb Q}}
\def\N{{\mathbb N}}
\def\ip{{\mathfrak p}}
\def\is{{\mathfrak s}}
\def\ld{{\ell\Delta}}
\def\deltabar{{\overline{\Delta}}}
\def\dbar{{\overline{\partial}}}
\newcommand{\K}{\mathbf{K}}
\newcommand{\Gm}{\mathbf{G}_{m}}
\newcommand{\G}{\mathbf{G}}
\newcommand{\ZZ}{\mathds{Z}}
\newcommand{\Ga}{\mathbf{G}_{a}}
\DeclareMathOperator{\gr}{gr}
\DeclareMathOperator{\ord}{ord}
\DeclareMathOperator{\Hom}{Hom}
\DeclareMathOperator{\Ker}{Ker}
\DeclareMathOperator{\Rep}{\bf Rep}
\DeclareMathOperator{\TRep}{\widetilde{\bf Rep}}
\DeclareMathOperator{\GL}{\bf GL}
\DeclareMathOperator{\id}{id}
\DeclareMathOperator{\Const}{\mathcal C}
\DeclareMathOperator{\I}{\mathbb I}
\DeclareMathOperator{\U}{\mathcal U}
\DeclareMathOperator{\diff}{diff}
\DeclareMathOperator{\Mn}{\bf M}
\DeclareMathOperator{\SL}{\bf SL}
\DeclareMathOperator{\Quot}{Quot}
\DeclareMathOperator{\Char}{char}
\DeclareMathOperator{\Id}{Id}
\DeclareMathOperator{\Span}{span}
\DeclareMathOperator{\diag}{diag}
\DeclareMathOperator{\Cat}{\mathcal{C}}
\DeclareMathOperator{\Ll}{\ell\ell}
\DeclareMathOperator{\Ru}{{\bf R}_u}
\DeclareMathOperator{\soc}{\mathrm soc}
\newcommand{\Le}{\leqslant}
\newcommand{\Ge}{\geqslant}
\begin{document}

\title{Reductive Linear Differential Algebraic Groups and the  Galois Groups of Parameterized Linear Differential Equations}
\shorttitle{Reductive LDAGs and the Galois Groups of Parameterized Linear Differential Equations}

\author{Andrey Minchenko\affil{1}, Alexey Ovchinnikov\affil{2,3}, and Michael F. Singer\affil{4}}

\abbrevauthor{A. Minchenko {\em et al.}}

\headabbrevauthor{A. Minchenko {\em et al.}}

\address{
\affilnum{1}The Weizmann Institute of Science, Department of Mathematics, Rehovot 7610001, Israel, 
\affilnum{2} Department of Mathematics, CUNY Queens College, 65-30 Kissena Blvd,
Queens, NY 11367, USA, 
\affilnum{3}
Department of Mathematics, CUNY Graduate Center, 365 Fifth Avenue,
New York, NY 10016, USA, and
\affilnum{4} Department of Mathematics,
North Carolina State University,
Raleigh, NC 27695-8205, USA}

\correspdetails{aovchinnikov@qc.cuny.edu}

\volumeyear{2014}
\paperID{rnt344}

\begin{abstract} 
We develop the representation theory for reductive linear differential algebraic groups (LDAGs). In particular, we exhibit an explicit sharp upper bound for orders of derivatives in differential representations of reductive LDAGs, extending existing results, which were obtained for $\SL_2$ in the case of just one derivation.
As an application of the above bound, we develop an algorithm that tests whether the parameterized differential Galois group of a system of linear differential 
equations is reductive and, if it is, calculates it.
\end{abstract}
\received{April 5, 2013}
\revised{November 29, 2013}
\accepted{December 2, 2013}


\maketitle

\section{Introduction}At the most basic level, a linear differential algebraic group (LDAG)  is a group of matrices whose entries are functions satisfying a fixed set of polynomial  differential equations.  An algebraic study of these objects in the context of differential algebra was initiated by Cassidy in~\cite{Cassidy}   and further developed by Cassidy~\cite{CassidyRep,Cassunipot,CassidyClassification,CassidyLie,CassidyPlane}.    This theory of LDAGs has been extended to a theory of general differential algebraic groups by Kolchin, Buium, Pillay and others. Nonetheless, interesting applications via the parameterized Picard--Vessiot (PPV) theory to questions of integrability~\cite{GO,MiSiIsom}  and hypertranscendence~\cite{PhyllisMichael,CharlotteMichael} support a more detailed study of the linear case.

Although there are several similarities between the theory of LDAGs  and the theory of linear algebraic groups (LAGs), a major difference lies in the representation theory of reductive groups.  If $G$ is a reductive LAG defined over a field of characteristic $0$, then any representation of $G$ is completely reducible, that is, any invariant subspace has an invariant complement.  This is no longer the case for reductive LDAGs. For example, if $k$ is a differential field containing at least one element whose derivative is nonzero, the reductive LDAG $\SL_2(k)$ has a representation in $\SL_4(k)$ given by 
$$A \mapsto\begin{pmatrix}
A&A'\\
0&A
\end{pmatrix}.$$
One can show that this is not completely reducible (cf. Example~\ref{ex:sharp}).  Examples such as this show that the process of taking derivatives complicates the representation theory in a significant way. 
Initial steps to understand representations of LDAGs are given in~\cite{Cassidy, CassidyRep} and a classification of  semisimple LDAGs is given in~\cite{CassidyClassification}. A Tannakian approach to the representation theory of LDAGs was introduced in~\cite{OvchRecoverGroup,OvchTannakian} (see also~\cite{Moshe,Moshe2012}) and successfully used to further our understanding of representations of reductive LDAGs in~\cite{diffreductive,MinOvRepSL2}. This Tannakian approach gives a powerful tool in which one can understand the impact of taking derivatives on the representation theory of LDAGs. 

The main results of the paper consist of  bounds for orders of derivatives in differential representations of semisimple and reductive LDAGs (Theorems~\ref{thm:EqualFiltrations} and~\ref{thm:bound}, respectively).  Simplified, our results say that, for a semisimple LDAG, the orders of derivatives are bounded by the dimension of the representation. For a reductive LDAG containing a finitely generated group dense in the Kolchin topology (cf. Section~\ref{sec:basicdef}), they are bounded by the maximum of the bound for its semisimple part and by the order of differential equations that define the torus of the group. This result completes and substantially extends what could be proved using \cite{MinOvRepSL2}, where one is restricted just to $\SL_2$, one derivation, and to those representations that are extensions of just two irreducible representations. We expect that the main results of the present paper will be used in the future to give a complete classification of differential representations of semisimple LDAGs (as this was partially done for $\SL_2$ in \cite{MinOvRepSL2}). Although reductive and semisimple differential algebraic groups were  studied in~\cite{CassidyClassification,diffreductive},  the techniques used there were not developed enough to achieve the goals of this paper.  The main technical tools that we develop and use in our paper are filtrations of  modules of reductive LDAGs, which, as we show, coincide with socle filtrations in the semisimple case (cf. \cite{BGS,Kodera}). We expect that this technique is general and powerful enough to have applications beyond this paper.

In this paper, we also apply these results to the Galois theory of parameterized linear differential equations.  The classical differential Galois theory studies symmetry
groups of solutions of linear differential equations, or,
equivalently, the groups of automorphisms of the corresponding
extensions of differential fields. The groups that arise are LAGs over the field of constants. This theory, started in the 19th century by Picard and Vessiot, was put on a firm
modern footing by Kolchin~\cite{Kolchin1948}.
 A generalized differential Galois theory that uses
Kolchin's axiomatic approach~\cite{KolDAG} and realizes differential
algebraic groups as Galois groups was initiated in~\cite{Landesman}.

 The PPV Galois theory considered by Cassidy and Singer in~\cite{PhyllisMichael} is a special case of the Landesman generalized differential Galois theory and studies symmetry groups of the solutions of linear differential equations whose coefficients contain parameters. This is done by
constructing a differential field containing the solutions and their
derivatives with respect to the parameters, called a  PPV extension, and studying its group of
differential symmetries, called a parameterized differential Galois
group. The Galois groups that arise are LDAGs which are defined by polynomial differential
equations in the parameters. Another approach to the Galois theory of systems of linear differential equations with parameters is given in~\cite{TG}, where the authors study Galois groups for generic values of the parameters. It was shown in~\cite{DreyfusDensity,MiSiIsom}  that, a necessary and sufficient condition that an LDAG $G$ is a PPV-Galois group over the field $C(x)$ is that $G$ contains a finitely generated Kolchin-dense subgroup (under some further restrictions on $C$).

In Section~\ref{sec:main}, we show how our main result yields algorithms in the PPV theory.  For systems of differential equations without parameters in the usual Picard--Vessiot theory, there are many existing algorithms for computing differential Galois groups. A complete algorithm over the field $C(x)$, where $C$ is a computable algebraically closed field of constants, $x$ is transcendental over $C$,  and its derivative is equal to $1$, is given in~\cite{HRUW} (see also~\cite{CoSi97b} for the case when the group is reductive).  More efficient algorithms  for equations of low order appear  
in ~\cite{Kovacic1,FelixMichael1,FelixMichael2,FelixMichael3,FelixJAW,Michael}.  These latter algorithms depend on knowing a list of groups that can possibly occur and step-by-step eliminating the choices.

 For parameterized systems, the first known algorithms are given in \cite{Carlos,Dreyfus}, which apply to systems of first and second orders (see also~\cite{Carlos2} for the application of these techniques to the incomplete gamma function). An algorithm for the case in which the quotient of the parameterized Galois group by its unipotent radical is constant is given in \cite{MiOvSi}. In the present paper, without any restrictions to the order of the equations,
based on our main result (upper bounds mentioned above), we present  algorithms that
\begin{enumerate}
\item compute the quotient of the parameterized Galois group $G$ by its unipotent radical $\Ru(G)$;
\item test whether $G$ is reductive (i.e., whether $\Ru(G) = \{\id\}$)
\end{enumerate}   Note that these algorithms imply that we can determine if the PPV-Galois group is reductive and, if it is, compute it.

The paper is organized as follows. We start by recalling the basic definitions of differential algebra, differential dimension, differential algebraic groups, their representations, and unipotent and reductive differential algebraic groups in Section~\ref{sec:basicdef}. The main technical tools of the paper, properties of LDAGs containing a Kolchin-dense finitely generated subgroup  and grading filtrations of differential coordinate rings, can be found in Sections~\ref{sec:DFGG} and~\ref{sec:maintech}, respectively. The main result is in Section~\ref{sec:main:bound}. The main algorithms  are described in Section~\ref{sec:main}.
Examples that show that the main upper bound is sharp and illustrate the algorithm are in Section~\ref{sec:examples}.

\section{Basic definitions}\label{sec:basicdef}
\subsection{Differential algebra}
We begin by fixing  notation and recalling some basic facts from differential algebra (cf. \cite{Kol}). In this paper a $\Delta$-ring will be a commutative associative ring $R$ with unit $1$ and commuting derivations $\Delta=\{\partial_1,\ldots,\partial_m\}$.
We let $$\Theta := \big\{\partial_1^{i_1}\cdot\ldots\cdot\partial_m^{i_m}\:|\: i_j \Ge 0\big\}$$ and note that this free semigroup acts naturally on $R$.  For an element $\partial_1^{i_1}\cdot\ldots\cdot\partial_m^{i_m} \in \Theta$, we let $$\ord\big(\partial_1^{i_1}\cdot\ldots\cdot\partial_m^{i_m}\big) := i_1+\ldots+i_m.$$
Let $Y = \{y_1,\ldots,y_n\}$ be a set of variables and
$$
\Theta Y := \left\{\theta y_j
\:\big|\: \theta\in\Theta,\ 1\Le j\Le n\right\}.
$$
The ring of differential polynomials $R\{Y\}$ in
differential indeterminates $Y$
over $R$ is
$R[\Theta Y]$
 with
the derivations $\partial_i$ that 
extends the $\partial_i$-action on $R$ as follows:
$$
\partial_i\left(\theta y_j\right) := (\partial_i\cdot\theta)y_j,\quad 1 \Le j \Le n,\ \  1\Le i\Le m.$$
 An ideal $I$ in a $\Delta$-ring $R$ is called a differential ideal if
$
\partial_i(a) \in I$ for all  $a \in I$, $1\Le i\Le m$. 
For $F \subset R$,  $[F]$ denotes the differential ideal of $R$ generated by $F$.

Let $\K$  be a $\Delta$-field of characteristic zero.  We denote the subfield of constants of $\K$ by 
$$\K^\Delta:= \{c\in \K \ | \ \partial_i(c) = 0, \ 1 \Le i \Le m\}.$$ Let $\U$ be a differentially closed field containing $\K$, that is, a $\Delta$- extension field of $\K$  such that any system of polynomial differential equations with coefficients in $\U$ having a solution in some $\Delta$-extension of $\U$ already have a solution in $\U^n$ (see \cite[Definition~3.2]{PhyllisMichael} and the references therein). 

\begin{definition} A {\it Kolchin-closed} subset $W(\U)$ of $\U^n$ over $\K$ is the set of common zeroes
of a system of differential algebraic equations with coefficients in $\K,$ that is, for $f_1,\ldots,f_l \in \K\{Y\}$, we define
$$
W(\U) = \left\{ a \in \U^n\:|\: f_1(a)=\ldots=f_l(a) = 0\right\}.$$

\noindent If $W(\U)$ is a  Kolchin-closed subset  of $\U^n$ over $\K$, we let  $\I(W) = \{ f\in \K\{y_1,  \ldots , y_n\} \ | \ f(w) = 0 \ \forall \ w\in W(\U)\}$.
\end{definition}

One has the usual correspondence between Kolchin-closed subsets of  $\K^n$ defined over $\K$ and radical differential ideals of $\K\{y_1, \ldots , y_n\}.$   Given a Kolchin-closed subset $W$ of
$\U^n$ defined over $\K$, we let the coordinate ring $\K\{W\}$ be defined as
$$
\K\{W\} = \K\{y_1,\ldots,y_n\}\big/\I(W).
$$
A differential polynomial map $\varphi : W_1\to W_2$ between Kolchin-closed subsets of $\U^{n_1}$ and $\U^{n_2}$, respectively, defined over $\K$, is given in coordinates by differential polynomials in
$\K\{W_1\}$. Moreover, to give $\varphi : W_1 \to W_2$
is equivalent to defining a differential $\K$-homomorphism $\varphi^* : \K\{W_2\} \to \K\{W_1\}$. If $\K\{W\}$ is an integral domain, then $W$ is called {\it irreducible}. This is equivalent to $\I(W)$ being a prime differential ideal. More generally, if $$\I(W) = \mathfrak{p}_1\cap\ldots\cap \mathfrak{p}_q$$ is a minimal prime decomposition, which is unique up to permutation, \cite[VII.29]{Kap}, then the irreducible Kolchin-closed sets 
$W_1,\ldots, W_q$ corresponding to $\mathfrak{p}_1,\ldots,\mathfrak{p}_q$ are called the {\it irreducible components} of $W$. We then have $$W = W_1\cup\ldots\cup W_q.$$
If $W$ is an irreducible Kolchin-closed subset of  $\U^n$ defined over $\K$, we denote the quotient field of $\K\{W\}$ by $\K\langle W\rangle$.

In the following, we shall need the notion of a Kolchin closed set being of {\it differential type  at most zero}.  The general concept of differential type is defined in terms of the Kolchin polynomial (\cite[Section~II.12]{Kol}) but this more restricted notion has a simpler definition.

\begin{definition} Let $W$ be an irreducible Kolchin-closed subset of  $\U^n$ defined over $\K$.  We say that $W$ is of {\it differential type at most zero}  and denote this by $\tau(W) \Le 0$ if $\rm{tr.~deg}_\K \K\langle W\rangle < \infty.$ If $W$ is an arbitrary Kolchin-closed subset of  $\U^n$ defined over $\K$, we say that $W$ has differential type at most zero if this is true for each of its components.
\end{definition}

We shall use the fact that if $H\trianglelefteq G$ are LDAGs, then  $\tau(H) \Le 0$ and $\tau(G/H) \Le 0$ if and only if $\tau(G) \Le 0$ \cite[Section~IV.4]{KolDAG}.

\subsection{Linear Differential Algebraic Groups} Let $\K \subset \U$ be as above. Recall that LDAG stands for linear differential algebraic group.
\begin{definition}\cite[Chapter~II, Section~1, p.~905]{Cassidy}\label{def:LDAG} An {\it LDAG} over $\K$
is a Kolchin-closed subgroup $G$ of $\GL_n(\U)$ over $\K$,
that is, an intersection
of a Kolchin-closed subset of $\U^{n^2}$ with $\GL_n(\U)$ that is closed under
the group operations.
\end{definition}

Note that we identify $\GL_n(\U)$ with a Zariski closed
subset of $\U^{n^2+1}$ given by
$$\left\{(A,a)\:\big|\: (\det(A))\cdot a-1=0\right\}.$$
If $X$ is an invertible $n\times n$ matrix, we
can identify it with the pair $(X,1/\det(X))$. Hence, we may represent the coordinate ring of $\GL_n(\U)$ as
$
\K\{X,1/\det(X)\}$. 
As usual, let $\Gm(\U)$ and $\Ga(\U)$ denote the multiplicative and additive groups of $\U$, respectively. The coordinate ring of the LDAG $\SL_2(\U)$ is isomorphic to    $$\K\{c_{11},c_{12},c_{21},c_{22}\}/[c_{11}c_{22}-c_{12}c_{21} -1].$$
For a group $G\subset\GL_n(\U)$, we denote  the Zariski closure of $G$ in $\GL_n(\U)$ by $\overline{G}$. Then $\overline{G}$ is a LAG over $\U$. If $G\subset\GL_n(\U)$ is an LDAG defined over $\K$, then $\overline{G}$ is
defined over $\K$ as well.

The irreducible component of an LDAG $G$ containing $\id$, the identity,  is called the {\it identity component} of $G$ and denoted by $G^\circ$. An LDAG $G$ is called {\it connected} if $G = G^\circ$, which is equivalent to $G$ being an irreducible Kolchin closed set \cite[p.~906]{Cassidy}.

The coordinate ring $\K\{G\}$ of an LDAG $G$ has a structure of a {\it differential Hopf algebra}, that is, a Hopf algebra in which the coproduct, antipode, and counit are homomorphisms of differential algebras \cite[Section~3.2]{OvchRecoverGroup} and \cite[Section~2]{CassidyRep}. One can view $G$ as a representable functor defined on $\K$-algebras, represented by $\K\{G\}$. For example, if $V$ is an $n$-dimensional vector space over $\K$, $\GL(V)=\mathrm{Aut} V$ is an LDAG represented by $\K\{\GL_n\}=\K\{\GL_n(\U)\}$.

\subsubsection{Representations of LDAGs}
\begin{definition}\cite{CassidyRep},\cite[Definition~6]{OvchRecoverGroup} Let $G$ be an LDAG. A  differential polynomial
group homomorphism  $$r_V : G \to \GL(V)$$ is called a
{\it differential representation} of $G$, where $V$ is a
finite-dimensional vector space over $\K$. Such space is
simply called a {\it $G$-module}. This is equivalent to giving a {\it comodule structure} $$\rho_V : V \to V\otimes_\K \K\{G\}, $$ see \cite[Definition~7 and Theorem~1] {OvchRecoverGroup}, \cite[Section~3.2]{Waterhouse}. Moreover, if $U\subset V$ is a submodule, then ${\varrho_V|}_U=\varrho_U$.

As usual, {\it morphisms} between $G$-modules are $\K$-linear maps that are $G$-equivariant. The category of differential representations of $G$ is denoted by $\Rep G$.
\end{definition}

For an LDAG $G$, let $A:=\K\{G\}$ be its differential Hopf algebra and $$\Delta : A \to A\otimes_\K A$$ be the comultiplication inducing the 
{\it right-regular $G$-module structure} on $A$ as follows (see also \cite[Section~4.1]{OvchRecoverGroup}). For $g, x \in G(\U)$ and $f \in A$,
$$
\left(r_g(f)\right)(x)=f(x\cdot g) = \Delta(f)(x,g)= \sum_{i=1}^nf_i(x) g_i(g),
$$
where $\Delta(f) = \sum_{i= 1}^n f_i\otimes g_i$.
The $k$-vector space $A$ is an $A$-comodule via
$$
\varrho_A:=\Delta.
$$
\begin{proposition}\label{prop:BasicPropsA}\cite[Corollary~3.3, Lemma~3.5]{Waterhouse}\cite[Lemma~3]{OvchRecoverGroup}
The coalgebra $A$ is a countable union of its finite-dimensional subcoalgebras. If $V\in\Rep G$, then, as an $A$-comodule, $V$ embeds into $A^{\dim V}$.
\end{proposition}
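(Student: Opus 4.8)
The plan is to prove the two assertions separately, both of which are standard facts about comodules over coalgebras adapted to the differential setting; the point is that everything goes through because $A = \K\{G\}$ is still an (ordinary) coalgebra over $\K$, and the extra differential structure does not interfere with the coalgebra arguments. For the first assertion, I would use the Fundamental Theorem of Comodules: given any element $a \in A$, write $\Delta(a) = \sum_{i=1}^r b_i \otimes c_i$ with the $c_i$ linearly independent over $\K$; applying coassociativity $(\Delta \otimes \id)\Delta = (\id \otimes \Delta)\Delta$ and the counit axiom, one shows that the finite-dimensional subspace spanned by $a$ together with all the $b_i$ (equivalently, the subspace $\sum_i \K b_i$ after a symmetric manipulation) is a subcoalgebra containing $a$. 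Since $A$ is generated as a $\K$-algebra by the finitely many coordinate functions $X_{ij}$ and $1/\det(X)$ and their derivatives $\theta X_{ij}$ for $\theta \in \Theta$, and $\Theta$ is a countable set, $A$ is a countable-dimensional $\K$-vector space; hence $A$ is the union of an increasing countable chain of finite-dimensional subspaces, each of which is contained in a finite-dimensional subcoalgebra by the above. Taking the union of those subcoalgebras exhibits $A$ as a countable union of finite-dimensional subcoalgebras.

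For the second assertion, let $V \in \Rep G$ with comodule map $\varrho_V : V \to V \otimes_\K A$, and fix a $\K$-basis $v_1, \dots, v_d$ of $V$, so that $\varrho_V(v_j) = \sum_{i=1}^d v_i \otimes a_{ij}$ for some $a_{ij} \in A$. I would define the map $\Phi : V \to A^d$ by $\Phi(v) = \big((\epsilon \otimes \id)\text{-type contractions}\big)$; concretely, writing $\varrho_V(v) = \sum_i v_i \otimes f_i(v)$, set $\Phi(v) = (f_1(v), \dots, f_d(v))$. Coassociativity of $\varrho_V$ (i.e. $(\varrho_V \otimes \id)\varrho_V = (\id \otimes \Delta)\varrho_V$) shows that each coordinate $f_i : V \to A$ is a comodule morphism when $A^d$ is given the diagonal right-regular comodule structure, and the counit axiom $(\id \otimes \epsilon)\varrho_V = \id_V$ shows that $\Phi$ is injective (if $\Phi(v) = 0$ then $v = (\id \otimes \epsilon)\varrho_V(v) = \sum_i v_i\, \epsilon(f_i(v)) = 0$). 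Thus $\Phi$ embeds $V$ into $A^{\dim V}$ as a $G$-module, as claimed.

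The only subtlety worth flagging — and the step I would be most careful about — is checking that the maps constructed above are genuinely morphisms in $\Rep G$, that is, differential polynomial / comodule morphisms, not merely $\K$-linear maps; but this is automatic since they are built from $\varrho_V$, $\Delta$, and the counit, all of which are differential algebra homomorphisms or their associated comodule structure maps, and composition and tensoring of such maps stays in the category. I would simply cite \cite[Section~3.2]{Waterhouse} and \cite[Lemma~3]{OvchRecoverGroup} for the underlying coalgebra facts and note that the differential structure is preserved throughout; no genuinely new argument beyond the classical one is needed.
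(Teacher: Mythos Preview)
Your argument is correct and is exactly the standard coalgebra proof one finds in the cited references. Note, however, that the paper does not actually give its own proof of this proposition: it is stated as a quotation of \cite[Corollary~3.3, Lemma~3.5]{Waterhouse} and \cite[Lemma~3]{OvchRecoverGroup}, with no further argument in the text. So there is nothing to compare against beyond observing that your sketch reproduces the classical proof from those sources, and your remark that the differential structure plays no role here is the right justification for why the citation suffices.
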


By \cite[Proposition~7]{Cassidy}, $\rho(G)\subset\GL(V)$ is a differential algebraic subgroup.  
Given a representation $\rho$ of an LDAG $G$, one can define its prolongations 
$$P_i(\rho) : G \to \GL(P_i(V))
$$ with respect to $\partial_i$ as follows (see \cite[Section~5.2]{GGO}, \cite[Definition~4 and Theorem~1]{OvchRecoverGroup}, and \cite[p.~1199]{diffreductive}). Let 
\begin{equation}\label{eq:prolongation}
P_i(V):=\leftidx{_{\K}}{\left((\K\oplus \K\partial_i)_{\K}\otimes_{\K} V\right)}
\end{equation}
as vector spaces, where $\K\oplus \K\partial_i$ is considered as the right $\K$-module:
$
\partial_i\cdot a = \partial_i(a) + a\partial_i
$
for all $a \in \K$.
Then the action of $G$ is given by $P_i(\rho)$ as follows:
$$
P_i(\rho)(g) (1\otimes v) := 1\otimes \rho(g)(v),\quad P_i(\rho)(g)(\partial_i\otimes v) := \partial_i\otimes\rho(g)(v)
$$
for all $g \in G$ and $v \in V$. In the language of matrices, if $A_g \in \GL_n$ corresponds to the action of $g \in G$  on $V$, then the matrix
$$
\begin{pmatrix}
A_g&\partial_i A_g\\
0&A_g
\end{pmatrix}
$$
corresponds to the action of $g$ on $P_i(V)$. In what follows, the $q^{\rm th}$ iterate of $P_i$ is denoted
by $P_i^q$. Moreover, the above induces the exact sequences:
\begin{equation}\label{eq:es}
\begin{CD}
0 @>>> V @>\iota_i>> P_i(V)@>\pi_i>> V @>>>0,
\end{CD}
\end{equation}
where $\iota_i(v) = 1\otimes v$ and $\pi_i(a\otimes u + b\partial_i\otimes v) = bv$, $u,\,v \in V$, $a,\, b \in \K$.
For any integer $s$, we will refer to $$P_m^sP_{m-1}^s\cdot\ldots\cdot P_1^s(\rho): G \rightarrow GL_{N_s}$$  to be the {\em $s^{th}$ total prolongation of $\rho$} (where $N_s$ is the dimension of the underlying prolonged vector space). We denote this representation by $P^s(\rho):G \rightarrow GL_{N_s}$. The underlying vector space is denoted by $P^s(V)$.

It will be convenient to consider $A$ as a $G$-module. For this, let $\TRep G$ denote the differential tensor category 
of all $A$-comodules (not necessarily finite-dimensional), which are direct limits of finite-dimensional $A$-comodules by \cite[Section~3.3]{Waterhouse}.
Then $A\in\TRep G$ by Proposition~\ref{prop:BasicPropsA}. 

\subsubsection{Unipotent radical of  differential algebraic groups and reductive LDAGs}

\begin{definition}\cite[Theorem~2]{Cassunipot} Let $G$ be an LDAG defined over $\K$. We say that $G$ is {\it unipotent} if one of 
the following conditions holds:
\begin{enumerate}
\item $G$ is conjugate to a differential algebraic subgroup  of the group ${\mathbf U}_n$ of unipotent upper triangular matrices;
\item $G$ contains no elements of finite order $>1$;
\item $G$ has a descending normal sequence  of differential algebraic subgroups 
$$G=G_0 \supset G_1 \supset \ldots \supset G_N =\{1\}$$
with $G_i/G_{i+1}$ isomorphic to a differential algebraic subgroup of the additive group $\bold{G}_a$.\qedhere
\end{enumerate}
\end{definition}

One can  show that 
an LDAG   $G$ defined over $\K$ admits a maximal normal unipotent differential subgroup \cite[Theorem~3.10]{diffreductive}.

\begin{definition}This subgroup is  called the {\it unipotent radical} of $G$ 
and denoted by $\Ru(G)$. The unipotent radical of a LAG $H$ is also denoted by $\Ru(H)$.
\end{definition}

\begin{definition}\cite[Definition~3.12]{diffreductive}
An LDAG $G$ is called {\it reductive} if its unipotent radical
is trivial, that is, $\Ru(G) = \{\id\}$.
\end{definition}

\begin{remark}\label{rem:GbarG}
If $G$ is given as a linear differential algebraic subgroup of some $\GL_\nu$, we may consider its Zariski closure $\overline{G}$
in $\GL_\nu$, which is an algebraic group scheme defined over $\K$. Then, following the proof of \cite[Theorem~3.10]{diffreductive} $$\Ru(G) = \Ru{\left(\overline{G}\right)} \cap G.$$ 
This implies that, if $\overline G$ is reductive, then $G$ is reductive.
However,  in general the Zariski closure of $\Ru(G) $ may be strictly 
included in $\Ru(\overline{G})$ \cite[Ex.~3.17]{diffreductive}.  
\end{remark}

\subsubsection{Differentially finitely generated groups}\label{sec:DFGG}   As mentioned in the introduction, one motivation for studying LDAGs is their use in the PPV theory.  In Section~\ref{sec:main}, we will discuss PPV-extensions of certain fields whose PPV-Galois groups satisfy the following property.
In this subsection, we will assume that $\K$ is differentially closed.

 \begin{definition}\label{def:DFGG} Let $G$ be an LDAG defined over $\K$.  We say that $G$ is \emph{differentially finitely generated}, or simply a \emph{DFGG}, if $G(\K)$ contains a  finitely generated subgroup that is Kolchin dense over $\K$. 
\end{definition}

\begin{proposition}\label{prop:finite}
If $G$ is a DFGG, then its identity component $G^\circ$ is a DFGG. 
\end{proposition}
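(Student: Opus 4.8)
The plan is to reduce the statement to a purely group-theoretic fact about finite-index subgroups combined with the fact (noted in the excerpt) that $G^\circ$ has finite index in $G$. Suppose $\Gamma \le G(\K)$ is a finitely generated subgroup that is Kolchin dense in $G$. Since $G^\circ$ is a Kolchin-closed subgroup of finite index in $G$, the intersection $\Gamma \cap G^\circ(\K)$ is a subgroup of finite index in $\Gamma$. A standard fact from combinatorial group theory states that a finite-index subgroup of a finitely generated group is itself finitely generated (this follows, for instance, from the Reidemeister–Schreier procedure, or from the Nielsen–Schreier-type counting argument applied to a Schreier transversal). Hence $\Gamma \cap G^\circ(\K)$ is finitely generated.

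Next I would show that $\Gamma \cap G^\circ(\K)$ is Kolchin dense in $G^\circ$. Let $H$ denote the Kolchin closure of $\Gamma \cap G^\circ(\K)$ inside $G^\circ$; it is a Kolchin-closed subgroup of $G^\circ$, hence of $G$. Since $\Gamma \cap G^\circ(\K)$ has finite index in $\Gamma$, one can write $\Gamma$ as a finite union of cosets $\gamma_j \cdot (\Gamma \cap G^\circ(\K))$, and therefore the Kolchin closure of $\Gamma$ is contained in the finite union $\bigcup_j \gamma_j \cdot H$. But the Kolchin closure of $\Gamma$ is all of $G$ by hypothesis, so $G = \bigcup_j \gamma_j \cdot H$; intersecting with $G^\circ$ and using that $H \subseteq G^\circ$, we get that $G^\circ$ is covered by finitely many translates of $H$, forcing $H$ to have finite index in $G^\circ$. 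Since $G^\circ$ is connected (irreducible as a Kolchin-closed set) and $H$ is a Kolchin-closed subgroup of finite index, $H$ cannot be proper: a proper closed subgroup of finite index would exhibit $G^\circ$ as a finite disjoint union of (at least two) nonempty closed sets, contradicting irreducibility. Therefore $H = G^\circ$, i.e. $\Gamma \cap G^\circ(\K)$ is Kolchin dense in $G^\circ$. Combined with the previous paragraph, $G^\circ(\K)$ contains a finitely generated Kolchin-dense subgroup, so $G^\circ$ is a DFGG.

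The only genuinely non-routine input is the combinatorial fact that a finite-index subgroup of a finitely generated group is finitely generated; everything else is a matter of bookkeeping with Kolchin closures and finite unions of cosets, together with the irreducibility of $G^\circ$. I expect the main (minor) obstacle to be making the coset argument precise, namely verifying that the Kolchin closure of a finite union of translates of a closed set is the union of the translates of its closure — this is immediate since translation by a fixed element of $G(\U)$ is a Kolchin-homeomorphism of $G$ and a finite union of closed sets is closed. One should also take a moment to confirm that $\Gamma \cap G^\circ(\K)$ really does have finite index in $\Gamma$, which holds because the index is bounded by $[G : G^\circ]$, a finite number since $G$ has only finitely many irreducible components.
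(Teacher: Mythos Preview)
Your proof is correct and follows essentially the same approach as the paper's: both take $\Gamma\cap G^\circ$ as the candidate finitely generated dense subgroup, obtaining finite generation from the fact that finite-index subgroups of finitely generated groups are finitely generated (the paper explicitly cites Reidemeister--Schreier for this, then opts to give a self-contained pigeonhole argument instead). You actually supply more detail on the density of $\Gamma\cap G^\circ$ in $G^\circ$ than the paper, which simply asserts it.
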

\begin{proof} The Reidemeister--Schreier Theorem implies that a subgroup of finite index in a finitely generated group is finitely generated (\cite[Corollary~2.7.1]{MKS66}). One can use this fact to construct a proof of the above. Nonetheless,  we present a self-contained proof. 

Let $F:= G/G^\circ$ and $t:= |G/G^\circ|$. We claim that every sequence of $t$ elements of $F$ has a contiguous subsequence whose product is the identity.  To see this, let $a_1,\ldots ,a_t$ be a sequence of elements of $F$.  Set 
$$
b_1:= a_1, b_2:= a_1a_2, \ldots , b_t:=a_1a_2\cdot\ldots\cdot a_t.
$$
If there are $i<j$ such that $b_i=b_j$ then 
$$
{\rm id} = b_i^{-1}b_j= a_{j+1} \cdot\ldots \cdot a_j.
$$
If  the $b_j$ are pairwise distinct, they exhaust $F$ and so one of them must be the identity.

Let $S=S^{-1}$ be a finite set generating a dense subgroup $\Gamma\subset G$. Set
$$
\Gamma_0:=\big\{s \:|\: s=s_1\cdot\ldots\cdot s_m\in G^\circ,\ s_i\in S\big\}.
$$ 
Then $\Gamma_0$ is a Kolchin dense subgroup of $G^\circ$. Applying the above observation concerning $F$, we see that $\Gamma_0$ is generated by the finite set
$$
S_0:=\big\{s \:|\: s = s_1\cdot\ldots\cdot s_m\in G^\circ,\ s_i\in S\ \text{and}\  m\Le|G/G^\circ|\big\}.\qedhere
$$\end{proof}

\begin{lemma}\label{lem2}  If $H\subset \Ga^m$ is a DFGG, then $\tau(H) \Le 0$.\end{lemma}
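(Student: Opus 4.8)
The plan is to reduce to a statement about subgroups of $\Ga$ and then analyze the differential equations defining a Kolchin-dense finitely generated subgroup. First I would recall the structure of Kolchin-closed subgroups of $\Ga^m$: by Cassidy's work (and the linear structure of $\Ga^m$), any such subgroup is defined by a system of linear homogeneous differential equations, i.e. it is the common kernel in $\U^m$ of a finite set of linear differential operators with coefficients in $\K$. Equivalently, after choosing coordinates, $H = \{(x_1,\dots,x_m) : L_j(x_1,\dots,x_m)=0,\ j=1,\dots,r\}$ where each $L_j = \sum_{k} \ell_{jk}(\partial_1,\dots,\partial_m)x_k$ with $\ell_{jk} \in \K[\Theta]$. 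So $H$, as a Kolchin-closed set, is a $\K$-vector subspace of $\U^m$ cut out by a left $\D$-submodule of $\D^m$, where $\D = \K[\partial_1,\dots,\partial_m]$.

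Next I would bring in the hypothesis that $H$ is a DFGG: there is a finitely generated subgroup $\Gamma \subset H(\K)$ that is Kolchin dense in $H$. Since $\Gamma \subset \Ga^m$ is a finitely generated abelian group with no torsion (it sits inside a $\Q$-vector space), $\Gamma \cong \Z^s$ for some finite $s$; pick generators $v_1,\dots,v_s \in \K^m$. Kolchin density means that $\I(H)$ is exactly the set of differential polynomials vanishing on $\Gamma$, and because $H$ is a linear (vector-space) object, this forces $H$ to equal the Kolchin closure of the $\K$-span — more precisely, the $\D$-submodule of $\D^m$ defining $H$ is the annihilator of the finite tuple $(v_1,\dots,v_s)$ under the natural pairing. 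The key point is then: a finitely generated $\D$-module has finite differential type, and dually the Kolchin closure of finitely many points in $\Ga^m$ has bounded transcendence degree. Concretely, $\K\langle H\rangle$ is generated over $\K$ by the coordinate functions $x_1,\dots,x_m$ and their derivatives $\theta x_k$; I would show that modulo $\I(H)$ only finitely many of these are algebraically (indeed linearly) independent, because each $\theta x_k$, evaluated at the generic point, is a $\K$-linear combination of the finitely many "leading" derivatives determined by a Gröbner-type basis of the defining $\D$-submodule. Finitely generated $\Rightarrow$ Noetherian in a suitable sense gives the uniform bound on orders, hence $\trdeg_\K \K\langle H\rangle < \infty$, i.e. $\tau(H) \le 0$.

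The cleanest route, and the one I would actually write, is to invoke the normal-subgroup criterion quoted just before the lemma: $\tau$ is "additive" in the sense that for $H_1 \trianglelefteq H$, $\tau(H)\le 0 \iff \tau(H_1)\le 0$ and $\tau(H/H_1)\le 0$. One reduces $H \subset \Ga^m$ by a filtration with successive quotients subgroups of $\Ga$, so it suffices to treat $H \subset \Ga$. Then $H$ is either all of $\Ga$ (not a DFGG, since $\Ga(\K)$ has no finitely generated dense subgroup when $\K$ has a nonzero derivation — one checks a finitely generated subgroup satisfies a nontrivial linear ODE), or $H$ is defined by a single nonzero linear operator $L(\partial_1,\dots,\partial_m)x=0$, say of order $d$; in the latter case $\K\langle H\rangle$ is generated by $x$ and derivatives of order $<d$ after using $L$ to rewrite higher ones, so $\trdeg_\K\K\langle H\rangle \le$ (number of such derivatives) $<\infty$, giving $\tau(H)\le 0$.

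I expect the main obstacle to be the dichotomy step for $H \subset \Ga$: ruling out $H = \Ga$ (or more generally showing that if $H$ is a DFGG then its defining $\D$-module is nonzero in each "direction") requires knowing that a finitely generated subgroup of $(\K,+)$ cannot be Kolchin dense in $\Ga$ unless... — in fact it never is when some derivation is nontrivial, because $s$ elements $c_1,\dots,c_s \in \K$ are always $\K$-linearly dependent together with enough of their derivatives (Wronskian/dimension count), producing a nonzero linear differential equation they all satisfy, hence a proper Kolchin-closed subgroup containing $\Gamma$. Making this Wronskian argument precise over a general $\Delta$-field $\K$ of characteristic zero, and assembling the pieces into the $\Ga^m$ statement via the filtration, is where the real work lies; the transcendence-degree bookkeeping afterward is routine.
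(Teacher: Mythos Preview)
Your ``cleanest route'' via a filtration has a real gap. To induct on the number of $\Ga$-factors you take $H_1 := H \cap \ker(\pi_1) \trianglelefteq H$ and want to apply the induction hypothesis to $H_1 \subset \Ga^{m-1}$ and the $\Ga$-case to $H/H_1 \cong \pi_1(H)$. The quotient $\pi_1(H)$ is a DFGG because it is a homomorphic image of one, but the DFGG property does \emph{not} pass to arbitrary Kolchin-closed subgroups, so there is no reason $H_1$ should be a DFGG and the induction stalls. The paper avoids this entirely by using a product embedding rather than a filtration: each projection $\pi_i(H)\subset\Ga$ is a DFGG (as an image of $H$), hence $\tau(\pi_i(H))\Le 0$ by the one-variable case, which the paper simply cites from~\cite[Lemma~2.10]{MiOvSi}; then $H\subset \pi_1(H)\times\cdots\times\pi_m(H)$ and a product of type-$\Le 0$ groups has type $\Le 0$, so $\tau(H)\Le 0$. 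Only images of $H$ are used, never closed subgroups.

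Your handling of the $\Ga$-case is also incorrect as written. You claim that a proper $H\subsetneq\Ga$ is cut out by a single nonzero operator $L$ and that this lets you rewrite all higher derivatives in terms of finitely many. With several parametric derivations this fails: e.g.\ $\partial_1 x=0$ alone leaves $\partial_2^j x$ free for all $j$, so that zero set has infinite transcendence degree over $\K$. What one actually needs is that the full annihilator of the generators $c_1,\dots,c_s$ in $\K[\Theta]$ has finite codimension: the evaluation map $\K[\Theta]\to\K^s$, $L\mapsto (L(c_1),\dots,L(c_s))$, lands in a space of dimension $s$, so its kernel has codimension $\Le s$, and that codimension equals $\trdeg_\K\K\langle H\rangle$. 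Your first, vaguer paragraph (annihilator of the finite tuple, $\D^m$-module) is in fact much closer to a correct self-contained argument than the ``cleanest route'' you chose to write up; the missing ingredient there is not ``Noetherian'' but precisely this finite-codimension observation.
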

\begin{proof} Let $\pi_i$ be the projection of $\Ga^m$ onto its $i$th factor. We have that $\pi_i(H)\subset \Ga$ is a DFGG and so, by \cite[Lemma~2.10]{MiOvSi}, $\tau(\pi_i(H)) \Le 0$. Since $$H \subset \pi_1(H)\times \ldots \times \pi_m(H)\quad \text{and}\quad \tau(\pi_1(H)\times \ldots \times \pi_m(H)) \Le 0,$$ we have $\tau(H) = 0$.\end{proof}

\begin{lemma}\label{lem3}  If $H\subset \Gm^r$ is a DFGG, then $\tau(H) \Le 0$.\end{lemma}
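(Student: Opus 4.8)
The plan is to mimic the structure of the proof of Lemma~\ref{lem2}, reducing the multiplicative case to the known one-dimensional statement from \cite[Lemma~2.10]{MiOvSi}. First I would recall that $\Gm^r$ has exactly $r$ coordinate projections $\pi_i\colon \Gm^r \to \Gm$, and that each $\pi_i(H)$ is the continuous (Kolchin) image of $H$; since a Kolchin-dense finitely generated subgroup of $H$ maps onto a Kolchin-dense finitely generated subgroup of $\pi_i(H)$, each $\pi_i(H) \subset \Gm$ is again a DFGG. By \cite[Lemma~2.10]{MiOvSi} (the $\Gm$-analogue of the $\Ga$-statement cited in Lemma~\ref{lem2}), we get $\tau(\pi_i(H)) \Le 0$ for every $i$.

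Next I would assemble these into a bound for $H$ itself. We have the inclusion
$$
H \subset \pi_1(H)\times \ldots \times \pi_r(H),
$$
and a finite product of Kolchin-closed sets of differential type at most zero again has differential type at most zero: this follows either from the defining property $\trdeg_\K \K\langle W\rangle < \infty$ being stable under taking the compositum of finitely many such field extensions, or from the additivity statement $\tau(G)\Le 0 \iff \tau(H)\Le 0$ and $\tau(G/H)\Le 0$ for $H\trianglelefteq G$ recalled just before Section~2.2, applied inductively to the tower of subgroups $\pi_1(H)\times \cdots \times \pi_j(H)$. Hence $\tau\big(\pi_1(H)\times \ldots \times \pi_r(H)\big) \Le 0$. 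Finally, differential type at most zero passes to Kolchin-closed subsets: if $W \subset W'$ with $W$ irreducible, then $\K\langle W\rangle$ is a quotient field of a homomorphic image of $\K\{W'\}$, so $\trdeg_\K\K\langle W\rangle \Le \trdeg_\K \K\langle W'\rangle < \infty$ (and in the reducible case one argues componentwise). Applying this to $H \subset \pi_1(H)\times \ldots \times \pi_r(H)$ yields $\tau(H)\Le 0$.

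Honestly, there is no serious obstacle here: the argument is essentially formal once \cite[Lemma~2.10]{MiOvSi} is invoked, exactly as in Lemma~\ref{lem2}. The only points requiring a line of care are (i) checking that the projection of a DFGG is a DFGG — which is immediate since images of finitely generated Kolchin-dense subgroups are finitely generated and Kolchin-dense in the image — and (ii) confirming that \cite[Lemma~2.10]{MiOvSi} indeed covers differential algebraic subgroups of $\Gm$ and not merely $\Ga$; if it is stated only for $\Ga$, one instead cites the classification of differential algebraic subgroups of $\Gm$ (a subgroup of $\Gm$ is either finite or has coordinate ring a localization of a quotient of $\K\{y\}$ by an ideal forcing $\trdeg < \infty$) or reduces to the additive case via a differential-logarithmic-derivative type argument. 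I would write the proof in two or three sentences paralleling Lemma~\ref{lem2}, since the paper clearly intends these two lemmas to be read as a pair.
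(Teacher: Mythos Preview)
Your primary argument has a genuine gap: \cite[Lemma~2.10]{MiOvSi} is stated for differential algebraic subgroups of $\Ga$, not $\Gm$. This is precisely why the paper proves Lemma~\ref{lem3} separately rather than simply re-running the argument of Lemma~\ref{lem2} with a different citation. So the step ``by \cite[Lemma~2.10]{MiOvSi} \ldots\ we get $\tau(\pi_i(H))\Le 0$'' does not go through as written.

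You anticipated this in your caveat~(ii), and your proposed fix---reduce to the additive case via a logarithmic-derivative map---is exactly what the paper does. But the paper organizes it differently, and more efficiently: rather than projecting to each $\Gm$ factor and then handling the one-dimensional multiplicative case, it applies the single homomorphism
\[
\ld:\Gm^r\to\Ga^{rm},\qquad (y_1,\ldots,y_r)\mapsto\Big(\tfrac{\partial_1 y_1}{y_1},\ldots,\tfrac{\partial_m y_r}{y_r}\Big),
\]
to all of $H$ at once. The image $\ld(H)\subset\Ga^{rm}$ is a DFGG, so $\tau(\ld(H))\Le 0$ by Lemma~\ref{lem2}; the kernel of $\ld|_H$ sits inside $\big(\Gm(\K^\Delta)\big)^r$, which visibly has type $\Le 0$; and the extension property for differential type (recalled just before Section~2.2) then gives $\tau(H)\Le 0$. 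Once you bring in the logarithmic derivative, your projection-to-factors detour becomes redundant---the $r=1$ case you would need is itself proved by this same $\ld$ argument, so you may as well do it for general $r$ in one stroke.
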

\begin{proof} Let $\ld:\Gm^r \rightarrow \Ga^{rm}$ be the homomorphism
$$\ld(y_1, \ldots , y_r) = \left(\frac{\partial_1y_1}{y_1}, \ldots ,\frac{\partial_1y_r}{y_r},\frac{\partial_2y_1}{y_1},\ldots,\frac{\partial_2y_r}{y_r}, \ldots, \frac{\partial_my_1}{y_1}, \ldots,\frac{\partial_my_r}{y_r}\right).$$
The image of $H$ under this homomorphism is a DFGG in $\Ga^{rm}$ and so has differential type at most $0$.  The kernel of this homomorphism restricted to $H$ is $$\left(\Gm\left(\K^\Delta\right)\right)^r \cap H,$$ 
which also has type at most $0$.  Therefore, $\tau(H)\Le 0$.\end{proof}

\begin{lemma}\label{lem4} Let $G$ be  a reductive LDAG.  Then $G$ is a DFGG if and only if  $\tau\big({Z(G)}^\circ\big) \Le 0.$
\end{lemma}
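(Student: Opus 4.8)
The plan is to combine the differential analogue of the structure theorem for connected reductive groups with the hereditary behaviour of the DFGG property and with the additivity of $\tau$ recalled above. One reduces at the outset to the case that $G$ is connected: by Proposition~\ref{prop:finite} the identity component of a DFGG is a DFGG, and conversely, adjoining to a Kolchin-dense finitely generated subgroup of $G^\circ$ a set of representatives for $G/G^\circ$ produces a Kolchin-dense finitely generated subgroup of $G$; for connected $G$ one has $Z(G)=Z(G^\circ)$. For connected reductive $G$ I would invoke \cite{diffreductive}: $G=S\cdot T$, where $S:=[G,G]$ is semisimple, $T:=Z(G)^\circ$ is a differential algebraic torus (a Kolchin-closed subgroup of some $\Gm^r$), and $S\cap T$ is finite; moreover any connected commutative differential algebraic group of multiplicative type is again such a torus.

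For the implication ``$G$ a DFGG $\Rightarrow\tau(Z(G)^\circ)\Le0$'': the image of a Kolchin-dense finitely generated subgroup under the surjective differential polynomial homomorphism $G\to G/S$ is again Kolchin-dense and finitely generated, so $G/S$ is a DFGG. Now $G/S\cong T/(S\cap T)$ is a connected commutative group of multiplicative type, hence embeds in some $\Gm^{r'}$, so Lemma~\ref{lem3} gives $\tau(G/S)\Le0$. Applying the additivity of $\tau$ to $1\to S\cap T\to T\to T/(S\cap T)\to1$, the finite group $S\cap T$ having $\tau\Le0$, one gets $\tau(T)=\tau(Z(G)^\circ)\Le0$.

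For the converse, assume $\tau(T)\Le0$. First I would show $T$ is a DFGG by running the proof of Lemma~\ref{lem3} in reverse. Let $\ld\colon\Gm^r\to\Ga^{rm}$ be the logarithmic-derivative homomorphism used there. Its image $\ld(T)$ is a Kolchin-closed subgroup of $\Ga^{rm}$ with $\tau(\ld(T))\Le\tau(T)\Le0$; being the solution set of a system of linear homogeneous differential equations and of differential type at most zero, it is a finite-dimensional $\U^\Delta$-vector space, which is a DFGG because the $\Z$-span of a $\U^\Delta$-basis is already Kolchin-dense in it. The kernel of $\ld|_T$ lies in $\Gm^r(\U^\Delta)$, hence is an ordinary linear algebraic group over the algebraically closed field $\U^\Delta$ and therefore a DFGG. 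Since an extension of a DFGG by a DFGG is a DFGG (lift the finitely many generators of a dense subgroup of the quotient and adjoin a dense finitely generated subgroup of the kernel), $T$ is a DFGG. Next, $S$ is a DFGG because it is semisimple — this is known, resting on the classification of semisimple LDAGs \cite{CassidyClassification} (compare also the density results \cite{DreyfusDensity,MiSiIsom}). Since $T=Z(G)^\circ$ is central in $G$, the multiplication map $S\times T\to G$, $(s,t)\mapsto st$, is a surjective differential polynomial homomorphism; as $S\times T$ is a DFGG (the product of Kolchin-dense finitely generated subgroups of $S$ and $T$ is Kolchin-dense, using $\overline{A\times B}=\overline A\times\overline B$), so is $G$.

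I expect the main obstacle to be the input that semisimple LDAGs are DFGGs. This is exactly the case $Z(G)^\circ=\{\id\}$ of the lemma, so it cannot be circumvented, and it is the only step that genuinely uses the fine structure theory of differential algebraic groups — via the classification of Zariski-dense Kolchin-closed subgroups of simple algebraic groups — rather than formal bookkeeping with $\tau$ and with quotients. Secondary points that need to be checked carefully are the reduction to connected $G$, and the identification up to finite kernel of $Z(G)^\circ$ with the multiplicative-type group $G/[G,G]$ together with the fact that the latter embeds in a torus, which is what makes Lemma~\ref{lem3} and its converse available.
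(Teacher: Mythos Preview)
Your strategy matches the paper's: reduce to connected $G$, factor $G^\circ$ as (semisimple)${}\cdot{}$(central torus), and handle the two pieces separately. Two points deserve comment.

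First, the reduction to connected $G$. It works for the forward implication because $Z(G)^\circ\subset Z(G^\circ)^\circ$, so $\tau(Z(G)^\circ)\Le\tau(Z(G^\circ)^\circ)$. For the converse it does not work as you have written it: from $\tau(Z(G)^\circ)\Le0$ one cannot conclude $\tau(Z(G^\circ)^\circ)\Le0$ in general --- take $G$ to be the normaliser of a maximal torus in $\SL_2$, where $Z(G)$ is finite but $G^\circ\cong\Gm$ is not a DFGG. The paper's own proof makes the same slide between $Z(G)$ and $Z(G^\circ)$; the version of the statement actually needed and used later in the paper is with $Z(G^\circ)^\circ$, and for that version your reduction is sound.

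Second, and more substantively: you are right that the semisimple case is the crux, but it is not something you can simply cite --- the paper proves it here. The argument is: by Cassidy's classification \cite[Theorem~18]{CassidyClassification}, a connected semisimple LDAG factors as $G_1\cdots G_\ell$ with each $G_i=H_i\big(\K^{\calE_i}\big)$ for a simple algebraic group $H_i$ defined over $\Q$ and a Lie $\K$-subspace $\calE_i\subset\calD$. One chooses a commuting basis of $\calE_i$ and extends it to a commuting basis of $\calD$; by \cite[Lemma~9.3]{PhyllisMichael} the field $\K^{\calE_i}$ is differentially closed with respect to the complementary set of derivations, so $H_i\big(\K^{\calE_i}\big)$ may be regarded as an ordinary linear algebraic group over that (algebraically closed) field, and \cite[Lemma~2.2]{MichaelGmGa} then supplies a finitely generated Zariski-dense subgroup. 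Thus the step you black-boxed reduces, via the classification, to the classical fact for algebraic groups over algebraically closed fields --- but that reduction is precisely the content that is missing from your outline.

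For the torus, your hands-on argument via $\ld$ is correct and more explicit than the paper, which simply cites \cite[Proposition~2.9]{MiOvSi} for the statement that a differential-type-zero subgroup of a torus is a DFGG.
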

\begin{proof}  Assume that $G$ is a DFGG. By  Proposition~\ref{prop:finite}, we can assume that $G$ is Kolchin-connected as well as a DFGG. From \cite[Theorem~4.7]{diffreductive}, we can assume that $\overline{G}  = P$ is a reductive LAG. From the structure of reductive LAGs, we know that $$P = (P,P) \cdot Z(P),$$ where $Z(P)$ denotes the center, $(P,P)$ is the commutator subgroup and $Z(P) \cap (P,P)$ is finite.  Note also that ${Z(P)}^\circ$ is a torus and that $Z(G)= Z(P)\cap G$.  Let 
$$\pi: P \rightarrow P/(P,P) \simeq Z(P)/[Z(P)\cap(P,P)].$$
The image of $G$ is connected and so lies in $$\pi\big({Z(P)}^\circ\big) \simeq \Gm^t$$ for some $t$. The image is a DFGG and so, by Lemma~\ref{lem3},  must have type at most $0$. From the description of $\pi$, one sees that $$\pi:Z(G) \rightarrow Z(G)/[Z(P)\cap(P,P)] \subset Z(P)/[Z(P)\cap(P,P)].$$ Since $Z(P)\cap(P,P)$ is finite, we have $\tau\big({Z(G)}^\circ\big) \Le 0$. 

Nowadays assume that $\tau\big({Z(G)}^\circ\big) \Le 0.$  \cite[Proposition~2.9]{MiOvSi} implies that $Z(G^\circ)$ is a DFGG.  Therefore, it is enough to show that $G' = G/Z(G)^\circ$ is a DFGG. We see that $G'$ is semisimple, and we will show that any semisimple LDAG is a DFGG.  Clearly, it is enough  to show that this is true under the further assumption that $G'$ is connected.  

Let $\calD$ be the $\K$-vector space spanned by $\Delta$.  \cite[Theorem~18]{CassidyClassification} implies that $G' = G_1\cdot\ldots\cdot G_\ell$, where, for each $i$, there exists a simple LAG $H_i$ defined over $\Q$ and a Lie (A Lie subspace $\calE \subset \calD$ is a subspace such that, for any $\partial, \partial' \in \calE$, we have $\partial\partial' - \partial'\partial \in \calE$.)   $K$-subspace $\calE_i$ of $\calD$ such that $$G_i = H_i\left(\K^{\calE_i}\right),\quad \K^{\calE_i} = \big\{ c\in K \ | \ \partial(c) = 0 \mbox{ for all } \partial \in \calE_i\big\}.$$ Therefore, it suffices to show that, for a simple LAG $H$ and a Lie $\K$-subspace $\calE \subset \calD$, the LDAG $H\big(\K^{\calE}\big)$ is a DFGG.  
From~\cite[Proposition~6 and~7]{KolDAG}, $\calE$ has a $\K$-basis of commuting derivations $\Lambda =\big\{\partial_1', \ldots , \partial_r'\big\}$, which can be extended to a commuting basis $\big\{\partial_1', \ldots , \partial_m'\big\}$ of $\calD$.  Let $\Pi = \big\{\partial_{r+1}', \ldots , \partial_m'\big\}$. \cite[Lemma~9.3]{PhyllisMichael} implies  that $\K^{\calE}$ is differentially closed as a $\Pi$-differential field.  We may consider $H\big(\K^{\calE}\big)$ as a LAG over the $\Pi$-differential field $\K^{\calE}$.  The result now follows from \cite[Lemma~2.2]{MichaelGmGa}.
\end{proof}

\section{Filtrations and gradings of the coordinate ring of an LDAG}\label{sec:maintech}
In this section, we develop the main technique of the paper, filtrations and grading of coordinate rings of LDAGs. Let $\K$ be a $\Delta$-field of characteristic zero, not necessarily differentially closed. 
The set of natural numbers $\{0, 1, 2,\ldots\}$ is denoted by $\N$.

\subsection{Filtrations of $G$-modules}
Let $G$ be an LDAG and $A:=\K\{G\}$ be the corresponding differential Hopf algebra (see \cite[Section~2]{CassidyRep} and \cite[Section~3.2]{OvchRecoverGroup}).  
Fix a faithful $G$-module $W$. Let
\begin{equation}\label{eq:varphi}
\varphi: \K\{\GL(W)\}\to A
\end{equation}
be the differential epimorphism of differential Hopf algebras corresponding to the embedding $G\to\GL(W)$. Set $$H:=\overline{G} ,$$ which is a LAG. Define
\begin{equation}\label{eq:defA0}
A_0:=\varphi(\K[\GL(W)])=\K[H]
\end{equation}
and, for $n\Ge 1$,
\begin{equation}\label{eq:defAn}
A_n:=\Span_\K\left\{\prod_{j\in J}\theta_j y_j \in A\:\Big|\: J\ \text{is a finite set},\  y_j\in A_0,\ \theta_j\in\Theta,\ \sum_{j\in J}\ord(\theta_j)\Le n\right\}.
\end{equation}
 
The following shows that the subspaces $A_n\subset A$ form a filtration (in the sense of~\cite{sweedler}) of the Hopf algebra $A$.
\begin{proposition}\label{prop:PropertiesOfAn}
We have
\begin{align}
 &A=\bigcup_{n\in\N}A_n,\quad A_n\subset A_{n+1},\label{eq:UnionAn}\\
 &A_iA_j\subset A_{i+j},\quad i,j\in\N,\label{eq:ProductAn}\\
 &\Delta(A_n)\subset\sum_{i=0}^nA_i\otimes_\K A_{n-i}.\label{eq:DeltaAn}
\end{align}\qedhere
\end{proposition}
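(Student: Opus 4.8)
The plan is to verify the three assertions \eqref{eq:UnionAn}, \eqref{eq:ProductAn}, \eqref{eq:DeltaAn} more or less directly from the definition \eqref{eq:defAn} of $A_n$, using the fact that $\varphi$ is a differential Hopf algebra epimorphism and that $A$ is generated as a differential algebra by $A_0=\K[H]$. First I would record the key structural fact: since $\varphi:\K\{\GL(W)\}\to A$ is a differential epimorphism and $\K\{\GL(W)\}=\K[\GL(W)]\{\ \}$ is the differential algebra generated by $\K[\GL(W)]$, the image $A$ is generated as a $\Delta$-algebra by $A_0=\varphi(\K[\GL(W)])=\K[H]$. Hence every element of $A$ is a $\K$-linear combination of products $\prod_{j\in J}\theta_j y_j$ with $y_j\in A_0$ and $\theta_j\in\Theta$; collecting those with total order $\Le n$ gives exactly $A_n$, which proves $A=\bigcup_n A_n$, and $A_n\subset A_{n+1}$ is immediate since enlarging the order bound only adds generators (and $A_0\subset A_1$ by taking $\theta_j=\mathrm{id}$). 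This settles \eqref{eq:UnionAn}.

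For \eqref{eq:ProductAn}, given a spanning monomial $\prod_{j\in J}\theta_j y_j\in A_i$ and $\prod_{k\in K}\eta_k z_k\in A_j$ (with $J,K$ disjoint index sets, $y_j,z_k\in A_0$, $\sum\ord\theta_j\Le i$, $\sum\ord\eta_k\Le j$), their product is $\prod_{\ell\in J\sqcup K}(\cdots)$ with total order $\Le i+j$, so it lies in $A_{i+j}$; since $A_i$ and $A_j$ are spanned by such monomials, $A_iA_j\subset A_{i+j}$ follows by bilinearity.

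The step I expect to be the main obstacle is \eqref{eq:DeltaAn}, the compatibility of the filtration with the coproduct. The idea is to proceed by induction on $n$. For $n=0$ we have $\Delta(A_0)=\Delta(\K[H])\subset\K[H]\otimes_\K\K[H]=A_0\otimes_\K A_0$, since $\K[H]$ is a sub-Hopf-algebra of $A$ (the comultiplication $\Delta=\varphi^{\otimes2}\circ\Delta_{\GL(W)}\circ(\text{section})$ restricts correctly, or more simply because $A_0$ is the image of the Hopf subalgebra $\K[\GL(W)]$ under the Hopf map $\varphi$). For the inductive step, the crucial point is that $\Delta$ is a homomorphism of \emph{differential} algebras, so it intertwines each derivation $\partial_i$: $\Delta\circ\partial_i=\partial_i\circ\Delta$, where $\partial_i$ acts on $A\otimes_\K A$ by the Leibniz rule $\partial_i(a\otimes b)=\partial_i a\otimes b+a\otimes\partial_i b$. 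A typical spanning element of $A_n$ is obtained from one of $A_{n-1}$ by applying a single $\partial_i$ to one factor (equivalently, $A_n$ is spanned by $A_{n-1}$ together with elements $\partial_i(u)$ with $u$ a spanning monomial of $A_{n-1}$ and, more generally, by products; but using \eqref{eq:ProductAn} and multiplicativity of $\Delta$ it suffices to treat the generators $\theta y$ with $y\in A_0$). Writing $\theta y=\partial_i(\theta' y)$ with $\ord\theta'=n-1$, and assuming inductively $\Delta(\theta' y)\in\sum_{p=0}^{n-1}A_p\otimes A_{n-1-p}$, we get
\[
\Delta(\theta y)=\partial_i\Delta(\theta' y)\in\partial_i\!\left(\sum_{p=0}^{n-1}A_p\otimes_\K A_{n-1-p}\right)\subset\sum_{p=0}^{n-1}\bigl(A_{p+1}\otimes_\K A_{n-1-p}+A_p\otimes_\K A_{n-p}\bigr)\subset\sum_{p=0}^{n}A_p\otimes_\K A_{n-p},
\]
using that $\partial_i A_p\subset A_{p+1}$ (clear from the definition). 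The remaining care is to handle general spanning monomials of $A_n$, i.e.\ products $\prod_j\theta_j y_j$ with $\sum\ord\theta_j\Le n$: since $\Delta$ is an algebra map, $\Delta\bigl(\prod_j\theta_j y_j\bigr)=\prod_j\Delta(\theta_j y_j)$, and by the single-factor case each $\Delta(\theta_j y_j)\in\sum_{p}A_p\otimes A_{\ord(\theta_j)-p}$; multiplying these out and invoking \eqref{eq:ProductAn} on each tensor slot shows the product lands in $\sum_{p=0}^{n}A_p\otimes_\K A_{n-p}$, because the orders add up to at most $n$ on the nose. This completes the induction and hence \eqref{eq:DeltaAn}. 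The only genuinely delicate bookkeeping is making sure the order counts are additive across both tensor factors simultaneously when multiplying, but this is exactly parallel to \eqref{eq:ProductAn} and presents no real difficulty.
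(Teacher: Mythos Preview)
Your proof is correct and follows essentially the same approach as the paper. The paper packages the argument for \eqref{eq:DeltaAn} slightly more cleanly by setting $B_n:=\sum_{i=0}^n A_i\otimes_\K A_{n-i}$ and noting once and for all that $B_iB_j\subset B_{i+j}$ and $\partial_l(B_n)\subset B_{n+1}$; together with the base case $\Delta(A_0)\subset B_0$ and the fact that $\Delta$ is a differential algebra homomorphism, this gives $\Delta(A_n)\subset B_n$ directly from the defining form of $A_n$, without separating the single-factor and product cases as you do---but the underlying logic is identical.
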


\begin{proof}
Relation~\eqref{eq:ProductAn} follows immediately from~\eqref{eq:defAn}. Since $\K[\GL(W)]$ differentially generates $\K\{\GL(W)\}$ and $\varphi$ is a differential epimorphism, $A_0$ differentially generates $A$, which implies~\eqref{eq:UnionAn}. Finally, let us prove~\eqref{eq:DeltaAn}. Consider the differential Hopf algebra
$$
B:=A\otimes_\K A,
$$
where $\partial_l$, $1\Le l\Le m$, acts on $B$ as follows:
$$
\partial_l(x\otimes y)=\partial_l(x)\otimes y+x\otimes \partial_l(y),\quad x,y\in A.
$$
Set
$$
B_n:=\sum_{i=0}^nA_i\otimes_\K A_{n-i},\quad n\in\N.
$$
We have
\begin{equation}\label{eq:RelationsForBn}
B_iB_j\subset B_{i+j}\qquad\text{and}\qquad\partial_l(B_n)\subset B_{n+1},\quad i,j\in\N,\ n\in\N,\ 1\Le l\Le m.
\end{equation}
Since $\K[\GL(W)]$ is a Hopf subalgebra of $\K\{\GL(W)\}$, $A_0$ is a Hopf subalgebra of $A$. In particular,
\begin{equation}\label{eq:A0ToB0}
\Delta(A_0)\subset B_0.
\end{equation}
Since $\Delta: A\to B$ is a differential homomorphism, definition~\eqref{eq:defAn} and relations~\eqref{eq:A0ToB0},~\eqref{eq:RelationsForBn} imply
$$
\Delta(A_n)\subset B_n,\ n\in\N.\qedhere
$$
\end{proof}

We will call $\{A_n\}_{n\in\N}$ the \emph{$W$-filtration} of $A$. As the definition of $A_n$ depends on $W$, we will sometimes write $A_n(W)$ for $A_n$.  By~\eqref{eq:DeltaAn}, $A_n$ is a subcomodule of $A$. If $x\in A\setminus{A_n}$, then the relation
\begin{equation}\label{eq:UnitRelation}
x=(\epsilon\otimes\Id)\Delta(x)
\end{equation}
shows that $\Delta(x)\not\in A\otimes A_n$. Therefore, $A_n$ is the largest subcomodule $U\subset A$ such that $\Delta(U)\subset U\otimes_\K A_n$. This suggests the following notation.

For $V\in\TRep G$ and $n\in\N$, let $V_n$ denote the largest submodule $U\subset V$ such that
$$
\varrho_V(U)\subset U\otimes_\K A_n.
$$
Then submodules $V_n\subset V$, $n\in\N$, form a filtration of $V$, which we also call the $W$-filtration. 

\begin{proposition}\label{prop:GMorAreFilt}
For a morphism $f: U\to V$ of $G$-modules and an $n\in\N$, we have $f(U_n)\subset V_n$.
\end{proposition}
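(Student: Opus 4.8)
The plan is to use the defining (maximal) property of $V_n$ as stated just before the proposition: $V_n$ is the largest submodule $U\subseteq V$ with $\varrho_V(U)\subseteq U\otimes_\K A_n$. So to show $f(U_n)\subseteq V_n$, it suffices to check that the submodule $f(U_n)\subseteq V$ satisfies $\varrho_V\big(f(U_n)\big)\subseteq f(U_n)\otimes_\K A_n$.

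First I would note that $f(U_n)$ is indeed a $G$-submodule of $V$, since $f$ is a morphism of $G$-modules (the image of a submodule under an equivariant linear map is a submodule). Next, the key point is compatibility of $f$ with the comodule maps: since $f: U\to V$ is a morphism of $A$-comodules, the square relating $\varrho_U$ and $\varrho_V$ commutes, i.e.
$$
\varrho_V\circ f=(f\otimes\Id_A)\circ\varrho_U.
$$
Now apply this to elements of $U_n$. By the definition of $U_n$ we have $\varrho_U(U_n)\subseteq U_n\otimes_\K A_n$, hence
$$
\varrho_V\big(f(U_n)\big)=(f\otimes\Id_A)\big(\varrho_U(U_n)\big)\subseteq (f\otimes\Id_A)\big(U_n\otimes_\K A_n\big)\subseteq f(U_n)\otimes_\K A_n.
$$
Therefore $f(U_n)$ is a submodule $U'\subseteq V$ with $\varrho_V(U')\subseteq U'\otimes_\K A_n$, so by maximality $f(U_n)\subseteq V_n$, which is exactly the claim.

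There is essentially no obstacle here; the only thing to be slightly careful about is making sure one is entitled to use the "largest submodule" characterization for an arbitrary $V\in\TRep G$ (not just for $A$ itself), but this is precisely how $V_n$ was defined in the paragraph preceding the statement, so it is available. One could alternatively phrase the whole argument functorially: the assignment $V\mapsto V_n$ is a subfunctor of the identity functor on $\TRep G$, and the displayed commuting square is the functoriality statement; but the elementwise argument above is the most economical and self-contained. If desired, one can also remark afterward that applying this to inclusions of submodules shows $V_n$ is compatible with the lattice of submodules, though that is not needed for the proposition as stated.
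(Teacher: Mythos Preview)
Your argument is correct and is essentially the same as the paper's, which simply states that the result follows immediately from the definition of a morphism of $G$-modules; you have merely unpacked that remark by writing out the comodule-compatibility square and invoking the maximality of $V_n$.
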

\begin{proof}
The proof follows immediately from the definition of a morphism of $G$-modules.
\end{proof}

Note that $U_n\subset V_n$ and $V_n\cap U\subset U_n$ for all submodules $U\subset V\in\TRep G$. Therefore,
\begin{align}
& U_n=U\cap V_n\ \ \text{for every subcomodule }\ U\subset V\in\TRep G, \label{eq:Restriction} \\
& (U\oplus V)_n=U_n\oplus V_n\ \ \text{for all}\ U,V\in\TRep G, \label{eq:nSum} \\
& \big(\bigcup\nolimits_{i\in\N} V(i)\big)_n=\bigcup\nolimits_{i\in\N} {V(i)}_n,\quad V(i)\subset V(i+1)\in\TRep G. \label{eq:DirLim}
\end{align}

\begin{proposition}\label{prop:VarrhoV_gen}
For every $V\in\TRep G$, we have
\begin{equation}\label{eq:VarrhoV_gen}
\varrho_V(V_n)\subset\sum_{i=0}^nV_i\otimes_\K A_{n-i}.
\end{equation}
\end{proposition}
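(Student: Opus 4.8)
The plan is to prove the refined comodule relation $\varrho_V(V_n)\subset\sum_{i=0}^n V_i\otimes_\K A_{n-i}$ by reducing it, via Proposition~\ref{prop:BasicPropsA}, to the already-established statement~\eqref{eq:DeltaAn} about the Hopf algebra $A$ itself. The key observation is that the relation $\Delta(A_n)\subset B_n=\sum_{i=0}^n A_i\otimes_\K A_{n-i}$ is precisely the case $V=A$ of what we want, so the strategy is to transport this along a comodule embedding of $V$ into a finite direct sum of copies of $A$.

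First I would fix $V\in\TRep G$. By~\eqref{eq:DirLim} it suffices to treat the finite-dimensional case, since both sides of~\eqref{eq:VarrhoV_gen} commute with the directed unions that present $V$; more precisely, write $V=\bigcup_i V(i)$ with each $V(i)$ a finite-dimensional subcomodule, use $V(i)_n=V(i)\cap V_n$ (which is~\eqref{eq:Restriction}), and note that a compatible system of the desired inclusions for the $V(i)$ passes to the limit. So assume $\dim V<\infty$. By Proposition~\ref{prop:BasicPropsA}, $V$ embeds as an $A$-comodule into $A^{\dim V}=:A^{\oplus d}$; call this embedding $\iota:V\hookrightarrow A^{\oplus d}$, so that $(\iota\otimes\Id)\circ\varrho_V=\varrho_{A^{\oplus d}}\circ\iota$, where $\varrho_{A^{\oplus d}}=\Delta^{\oplus d}$ is the diagonal coaction. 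Since $\iota$ is a morphism of $G$-modules, Proposition~\ref{prop:GMorAreFilt} gives $\iota(V_n)\subset (A^{\oplus d})_n$, and by~\eqref{eq:nSum} we have $(A^{\oplus d})_n=A_n^{\oplus d}$. Now apply~\eqref{eq:DeltaAn} in each coordinate: $\varrho_{A^{\oplus d}}\big(A_n^{\oplus d}\big)\subset B_n^{\oplus d}=\sum_{i=0}^n A_i\otimes_\K A_{n-i}$ inside $A^{\oplus d}\otimes_\K A$. Hence $(\iota\otimes\Id)\varrho_V(V_n)\subset\sum_{i=0}^n A_i^{\oplus d}\otimes_\K A_{n-i}$.

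The final step is to descend this back through $\iota\otimes\Id$, and this is the one point requiring a small argument rather than a formality: I must check that $(\iota\otimes\Id)^{-1}\big(\sum_{i=0}^n A_i\otimes_\K A_{n-i}\cap \mathrm{im}(\iota\otimes\Id)\big)=\sum_{i=0}^n V_i\otimes_\K A_{n-i}$, i.e. that the $W$-filtration on $V$ is the one induced from $A^{\oplus d}$. The inclusion $\supset$ is clear since $\iota(V_i)\subset A_i^{\oplus d}$. For $\subset$, one writes an element $\xi\in V\otimes_\K A$ with $(\iota\otimes\Id)(\xi)\in\sum_i A_i^{\oplus d}\otimes A_{n-i}$; choosing a $\K$-basis $\{a_s\}$ of $A$ adapted to the filtration $A_0\subset A_1\subset\cdots$, expand $\xi=\sum_s v_s\otimes a_s$ and read off that $\iota(v_s)\in A_i^{\oplus d}$ whenever $a_s$ is needed only modulo $A_{<\,n-i+1}$—equivalently, splitting $A=\bigoplus_i (A_i/A_{i-1})$ as $\K$-vector spaces and using that $\iota$ is injective, one concludes $v_s\in (\iota^{-1}(A_i^{\oplus d}))=V_i$ by Proposition~\ref{prop:GMorAreFilt} applied to $\iota^{-1}$ restricted appropriately, or more simply by $V_i=V\cap A_i^{\oplus d}$ from~\eqref{eq:Restriction}. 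Assembling these gives $\xi\in\sum_i V_i\otimes_\K A_{n-i}$, which is~\eqref{eq:VarrhoV_gen}.

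I expect the main obstacle to be exactly this descent through the tensored embedding: one is intersecting the image of $\iota\otimes\Id$ with a sum of subspaces of the form $A_i\otimes A_{n-i}$, and such intersections do not in general distribute over sums, so the clean way is to fix a filtered basis of the right-hand tensor factor $A$ and argue coordinatewise, invoking $V_i=V\cap A_i^{\oplus d}$ (which is~\eqref{eq:Restriction} together with~\eqref{eq:nSum}) to identify the coefficients. Everything else—reduction to finite dimension via~\eqref{eq:DirLim} and~\eqref{eq:Restriction}, the embedding from Proposition~\ref{prop:BasicPropsA}, functoriality of the filtration from Proposition~\ref{prop:GMorAreFilt}, and the input~\eqref{eq:DeltaAn}—is routine bookkeeping.
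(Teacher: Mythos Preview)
Your proposal is correct and follows essentially the same route as the paper's proof: reduce to finite dimension via~\eqref{eq:DirLim}, embed into $A^{\oplus d}$ via Proposition~\ref{prop:BasicPropsA}, invoke~\eqref{eq:DeltaAn} for $A$, and descend using $V_i = V\cap (A^{\oplus d})_i$ from~\eqref{eq:Restriction} and~\eqref{eq:nSum}. The only difference is packaging: the paper phrases the descent as ``if $U,V\in X$ then every submodule of $U\oplus V$ lies in $X$'' and asserts this follows from~\eqref{eq:Restriction} and~\eqref{eq:nSum}, whereas you have written out explicitly the filtered-basis argument needed to intersect $W\otimes A$ with $\sum_i Z_i\otimes A_{n-i}$---which is exactly the content the paper is suppressing.
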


\begin{proof}
Let $X$ denote the set of all $V\in\TRep G$ satisfying~\eqref{eq:VarrhoV_gen}. It follows from~\eqref{eq:Restriction} and~\eqref{eq:nSum} that, if $U,V\in X$, then every submodule of $U\oplus V$ belongs to $X$. If $V\in\Rep G$, then $V$ is isomorphic to a submodule of $A^{\dim V}$ by Proposition~\ref{prop:BasicPropsA}. Since $A\in X$ by Proposition~\ref{prop:PropertiesOfAn}, $\Ob(\Rep G)\subset X$. For the general case, it remains to apply~\eqref{eq:DirLim}. 
\end{proof}

Recall that a module is called \emph{semisimple} if it equals the sum of its simple submodules.
\begin{proposition}\label{prop:RedClosure}
Suppose that $W$ is a semisimple $G$-module. Then the LAG $H$ is reductive. If $W$ is not semisimple, then it is not semisimple as an $H$-module.
\end{proposition}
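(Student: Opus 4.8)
The plan is to relate semisimplicity of $W$ as a $G$-module to semisimplicity of $W$ as an $H$-module (where $H = \overline{G}$), and then invoke the classical fact that a linear algebraic group all of whose representations are... no, more carefully: we want that $H$ is reductive, which in characteristic zero is equivalent to $W$ being a semisimple $H$-module \emph{provided} $W$ is faithful — actually the right statement is that if $H$ is a LAG and $W$ is a faithful $H$-module that is semisimple, then $\Ru(H)$ acts trivially on $W$, hence (by faithfulness) $\Ru(H)$ is trivial, so $H$ is reductive. So the real content is the \emph{first} reduction: passing from ``$W$ semisimple as a $G$-module'' to ``$W$ semisimple as an $H$-module.''

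The key observation is this: a $G$-submodule of $W$ need not a priori be an $H$-submodule, but in fact it is. Indeed, the $G$-action on $W$ is the restriction of the $\GL(W)$-action, and the $G$-stable subspaces of $W$ are cut out by linear conditions; the Zariski closure $H$ of $G$ in $\GL(W)$ stabilizes any subspace that $G$ stabilizes, because the stabilizer of a subspace is Zariski closed in $\GL(W)$. Hence $G$-submodules of $W$ and $H$-submodules of $W$ coincide. The same reasoning applies to $G$-equivariant versus $H$-equivariant linear maps between subquotients, so a $G$-invariant complement to a $G$-submodule is automatically an $H$-invariant complement. Therefore $W$ is semisimple as a $G$-module if and only if it is semisimple as an $H$-module. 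This simultaneously gives the converse statement in the proposition (if $W$ is not semisimple as a $G$-module, it is not semisimple as an $H$-module).

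With that equivalence in hand, the first assertion follows: assume $W$ is semisimple as a $G$-module, hence as an $H$-module. Write $U := \Ru(H)$. Since we are in characteristic zero and $U$ is unipotent, for any $H$-module the subspace of $U$-fixed vectors is nonzero on every nonzero $U$-submodule; applying this to each simple $H$-constituent $S$ of $W$, the subspace $S^{U}$ is a nonzero $H$-submodule of the simple module $S$ (it is $H$-stable because $U \trianglelefteq H$), so $S^{U} = S$, i.e.\ $U$ acts trivially on $S$. As $W$ is a direct sum of such $S$, $U$ acts trivially on $W$. But $W$ is a faithful $H$-module (it is faithful for $G$, and $H$ is the Zariski closure of $G$ in $\GL(W)$, so the representation $H \to \GL(W)$ is a closed immersion), whence $U = \{\id\}$ and $H$ is reductive.

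The main obstacle — really the only nontrivial point — is the claim that $G$-submodules of $W$ are the same as $H$-submodules of $W$; everything else is standard. One should be slightly careful here: for an \emph{arbitrary} $G$-module the comodule structure is over $A = \K\{G\}$ and need not factor through $\K[H]$, so the statement genuinely uses that $W$ is the \emph{faithful} module defining the embedding $G \hookrightarrow \GL(W)$, for which the action is literally matrix multiplication and the relevant stabilizers in $\GL(W)$ are Zariski closed. I would state this as a short lemma (or inline it), citing the density of $G$ in $H = \overline{G}$: a linear subspace stable under a subgroup is stable under its Zariski closure.
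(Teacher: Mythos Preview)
Your argument is correct. The paper itself does not supply an independent proof of this proposition; it simply refers the reader to~\cite[proof of Theorem~4.7]{diffreductive}. What you have written is the standard route and is almost certainly what one finds there: identify $G$-submodules of $W$ with $H$-submodules of $W$ via Zariski density (stabilizers of subspaces in $\GL(W)$ are Zariski closed), conclude that semisimplicity of $W$ is the same over $G$ and over $H$, and then run the classical characteristic-zero argument that a faithful semisimple $H$-module forces $\Ru(H)=\{\id\}$ (unipotent normal subgroups fix nonzero vectors, hence act trivially on simples). Your caveat that this identification of submodule lattices is specific to the defining module $W$ --- because the comodule map for $W$ lands in $W\otimes A_0$ rather than $W\otimes A$ --- is exactly the right point to flag, and it is precisely why the proposition is stated for $W$ and not for an arbitrary $V\in\Rep G$.
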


\begin{proof}
For the proof, see~\cite[proof of Theorem~4.7]{diffreductive}.
\end{proof}

\begin{lemma}\label{lem:socV}
Let $V\in\TRep G$. If $V$ is semisimple, then $V=V_0$. (Loosely speaking, this means that all completely reducible representations of an LDAG are polynomial. This was also proved in~\cite[Theorem~3.3]{diffreductive}.) If $W$ is semisimple, the converse is true.
\end{lemma}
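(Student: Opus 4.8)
The plan is to prove the two implications separately, using the $W$-filtration machinery established in Propositions~\ref{prop:PropertiesOfAn}–\ref{prop:VarrhoV_gen} together with the reductivity criterion of Proposition~\ref{prop:RedClosure}. For the first implication, suppose $V\in\TRep G$ is semisimple. By \eqref{eq:DirLim} and the fact that a direct limit of semisimple modules along inclusions is semisimple, together with \eqref{eq:nSum}, it suffices to treat the case where $V$ is a finite-dimensional simple $G$-module. Then by Proposition~\ref{prop:BasicPropsA}, $V$ embeds as an $A$-comodule into $A^{\dim V}$, and by Proposition~\ref{prop:GMorAreFilt} and \eqref{eq:Restriction} it is enough to show that the semisimple part of $A$, or at least the image of a simple $V$, lands inside $A_0$. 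The key point is that $A_0=\K[H]$ and that the simple $G$-submodules of $A$ are exactly the simple $H$-submodules: a simple $G$-module $V$ is, by restriction of scalars, a module over $\overline G = H$ (indeed the comodule map factors appropriately), and a simple $G$-module whose underlying $H=\overline G$-action is what controls its "polynomiality." First I would observe that the right-regular $H$-module $\K[H]=A_0$ contains, with multiplicity, every simple $H$-module, and that since $H$ is a LAG in characteristic zero every rational $H$-module is a sum of such; then I would show that any simple $G$-submodule $V\subset A$ actually lies in $A_0$ by arguing that $\varrho_V=\Delta|_V$ must land in $V\otimes A_0$ — because a simple $G$-module restricted to $H$ stays semisimple only if $W$ is (this is where Proposition~\ref{prop:RedClosure} enters), and for a polynomial $H$-module the comodule structure over $A_0$ is forced. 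Concretely: by Proposition~\ref{prop:VarrhoV_gen} with $n=0$ one gets $\varrho_V(V_0)\subset V_0\otimes A_0$, so $V_0$ is the largest $A_0$-subcomodule, i.e. the largest submodule on which $G$ acts through its $H$-action polynomially; one then shows $V=V_0$ by noting that $V/V_0$ would be a nonzero semisimple module with no polynomial part, contradicting that simple modules are polynomial — which reduces to the base case of simple modules, handled directly via the embedding into $A_0\subset A$.

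For the base case that a simple $G$-module $V$ satisfies $V=V_0$: I would use that $V$ embeds into $A$ as a comodule; it remains to see the image lies in $A_0$. Here the cleanest route is the prolongation exact sequence \eqref{eq:es} and the matrix description: if $V$ were not contained in $A_0$, some $\theta y$ with $\ord(\theta)\ge 1$ and $y\in A_0$ would be needed to express elements of $V$, producing a nontrivial extension, via the prolongation $P_i$, of the $H$-module generated by $V$ by a copy of itself — and such an extension of a semisimple $H$-module by a semisimple $H$-module that is itself non-split would contradict complete reducibility of rational $H$-modules when $H$ is reductive. By Proposition~\ref{prop:RedClosure}, $H$ is reductive precisely when $W$ is semisimple — but in the forward direction we do not assume $W$ semisimple, so this argument must instead be run purely comodule-theoretically: the correct statement is that $V_0$ is an $H$-submodule of $V$ and $V/V_0$ has trivial $0$-th filtration piece, and for a simple $V$ either $V=V_0$ or $V_0=0$; the latter is impossible because any nonzero $G$-module has a nonzero image in some $A_n$ and the minimal such $n$, if positive, yields by \eqref{eq:VarrhoV_gen} a genuine quotient landing in $A_0$, forcing a lower $n$ — a descent argument. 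This descent is the technical heart and I expect it to be the main obstacle: one must carefully track that the $W$-filtration interacts well enough with simplicity to force the filtration to collapse at level $0$.

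For the converse, assume $W$ is semisimple and $V=V_0$; we must show $V$ is semisimple. By Proposition~\ref{prop:RedClosure}, $W$ semisimple implies $H=\overline G$ is a reductive LAG, hence every rational $H$-module is completely reducible. The condition $V=V_0$ means exactly $\varrho_V(V)\subset V\otimes A_0=V\otimes\K[H]$, i.e. the $G$-comodule structure on $V$ is induced from an $H$-comodule structure (the $G$-action factors through $H$ as a polynomial representation). Therefore $V$ is a rational $H$-module, and since $H$ is reductive $V$ is a direct sum of simple $H$-modules; each simple $H$-submodule is automatically a $G$-submodule (as the $G$-action is the restriction of the $H$-action), so $V$ is semisimple as a $G$-module. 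The one subtlety to verify is that "$V=V_0$" really does say the comodule map factors through $A_0$ — this is immediate from the definition of $V_0$ as the largest submodule $U$ with $\varrho_V(U)\subset U\otimes_\K A_n$ applied at $n=0$. This direction is straightforward modulo Proposition~\ref{prop:RedClosure}; the real work, as noted, is the descent argument in the forward direction.
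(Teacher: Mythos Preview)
Your proposal ultimately reaches the correct argument, but it is vastly more complicated than necessary, and the part you flag as ``the technical heart'' and ``the main obstacle'' is in fact a two-line computation that constitutes the \emph{entire} proof of the forward direction.

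For the forward implication, all of the material about embedding $V$ into $A^{\dim V}$, restricting to $H$-modules, prolongation exact sequences, and non-split extensions is superfluous. The paper's argument is simply this: reduce to $V$ simple by~\eqref{eq:nSum}; let $n$ be minimal with $V=V_n$; then $V_{n-1}$ is a proper submodule of the simple module $V$, so $V_{n-1}=\{0\}$, and hence $V_i=\{0\}$ for all $i<n$; now Proposition~\ref{prop:VarrhoV_gen} collapses to $\varrho_V(V)=\varrho_V(V_n)\subset V_n\otimes A_0=V\otimes A_0$, which by definition says $V=V_0$. That is the whole argument. Your ``descent'' paragraph gestures at exactly this, but the phrasing (``a genuine quotient landing in $A_0$, forcing a lower $n$'') is muddled: there is no quotient and no descent, just the observation that simplicity kills all lower filtration pieces so that only the $i=n$ term of~\eqref{eq:VarrhoV_gen} survives. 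Note in particular that this direction uses nothing about $H$ or $W$; your attempt to route it through reductivity of $H$ (which you yourself then retract, since $W$ is not assumed semisimple here) is a red herring.

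For the converse, your argument is correct and coincides with the paper's: $V=V_0$ means the comodule map factors through $A_0=\K[H]$, so $V$ is a rational $H$-module; $H$ is reductive by Proposition~\ref{prop:RedClosure}, so $V$ is semisimple over $H$; and since $G$ is Zariski-dense in $H$, $G$-submodules and $H$-submodules of $V$ coincide, so $V$ is semisimple over $G$ (the paper phrases this last step as a second appeal to Proposition~\ref{prop:RedClosure}).
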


\begin{proof}
By~\eqref{eq:nSum}, it suffices to prove the statement for a simple $V\in\Rep G$. Suppose that $V$ is simple and $V=V_n\neq V_{n-1}$. Then $V_{n-1}=\{0\}$, and Proposition~\ref{prop:VarrhoV_gen} implies 
\begin{equation}\label{eq:inVA0}
\varrho_V(V)\subset V\otimes A_0.
\end{equation} Hence, $V=V_0$. 

Suppose that $W$ is semisimple and $V=V_0\in\Rep G$. The latter means~\eqref{eq:inVA0}, that is, the representation of $G$ on $V$ extends to the representation of $H$ on $V$. But $H$ is reductive by Proposition~\ref{prop:RedClosure} (since $W$ is semisimple). Then $V$ is semisimple as an $H$-module. Again, by Proposition~\ref{prop:RedClosure}, the $G$-module $V$ is semisimple.
\end{proof}

\begin{corollary}\label{cor:FiltRed}
If $W$ is semisimple, then $A_0$ is the sum of all simple subcomodules of $A$.
Therefore, if $U, V$ are faithful semisimple $G$-modules, then the $U$- and $V$-filtrations of $A$ coincide.
\end{corollary}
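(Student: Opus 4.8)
The plan is to identify $A_0$ with the socle of $A$ as a $G$-module — that is, with the sum $S$ of all simple subcomodules of $A$ — under the hypothesis that $W$ is semisimple. Once this is done, the second assertion of the corollary is essentially formal: $S$ is intrinsic to $A$ and has nothing to do with the choice of faithful semisimple module, so $A_0(U)=A_0(V)=S$ for any two faithful semisimple $G$-modules $U$ and $V$; and since the defining formula~\eqref{eq:defAn} for $A_n$ refers only to $A_0$, this forces $A_n(U)=A_n(V)$ for all $n\in\N$, i.e.\ the $U$- and $V$-filtrations of $A$ coincide.

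For the equality $A_0=S$, I would first prove the inclusion $S\subseteq A_0$, which does not even use semisimplicity of $W$. Let $V\subseteq A$ be a simple subcomodule. Then $V$ is in particular semisimple, so Lemma~\ref{lem:socV} gives $V=V_0$; combining this with~\eqref{eq:Restriction}, which identifies $V_0$ with $V\cap A_0$, yields $V=V\cap A_0\subseteq A_0$. Summing over all simple subcomodules gives $S\subseteq A_0$.

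The reverse inclusion $A_0\subseteq S$ is where the hypothesis on $W$ is used, via the converse part of Lemma~\ref{lem:socV}. By~\eqref{eq:A0ToB0}, $\Delta(A_0)\subseteq A_0\otimes_\K A_0$, so $A_0$ is a subcomodule of $A$; hence $A_0$ lies in $\TRep G$ and is a direct limit of its finite-dimensional subcomodules. Let $U\subseteq A_0$ be such a finite-dimensional subcomodule. By~\eqref{eq:Restriction}, $U_0=U\cap A_0=U$, so $U=U_0$, and since $W$ is semisimple, Lemma~\ref{lem:socV} now tells us that $U$ is semisimple, i.e.\ $U\subseteq S$. Taking the union over all such $U$ gives $A_0\subseteq S$, completing the proof that $A_0=S$, and with it the corollary.

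The only genuinely delicate point I anticipate is the bookkeeping in the second inclusion: one must keep track of the difference between ``$U_0$ computed inside $U$ as a module'' and ``$U\cap A_0$'', which is exactly what~\eqref{eq:Restriction} supplies, and one must reduce from arbitrary $A_0$ to its finite-dimensional subcomodules in order to invoke Lemma~\ref{lem:socV}, which is stated for objects of $\Rep G$. Everything else is a direct consequence of Lemma~\ref{lem:socV} and the formal properties of the $W$-filtration established in~\eqref{eq:Restriction}–\eqref{eq:DirLim}.
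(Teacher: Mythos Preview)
Your proposal is correct and follows essentially the same route as the paper: both directions rest on Lemma~\ref{lem:socV}, with the inclusion $S\subseteq A_0$ coming from the forward implication and $A_0\subseteq S$ from the converse under the semisimplicity hypothesis on $W$. The paper's proof is terser---it invokes Proposition~\ref{prop:GMorAreFilt} in place of your use of~\eqref{eq:Restriction} for the first inclusion, and simply asserts that $A_0$ is semisimple by Lemma~\ref{lem:socV} without spelling out the passage through finite-dimensional subcomodules---so your version is a careful elaboration of the same argument rather than a different one.
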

\begin{proof}
By Lemma~\ref{lem:socV}, if $Z \subset A$ is simple, then $Z=Z_0$. Hence, by Proposition~\ref{prop:GMorAreFilt},  $Z$ is contained in $A_0$.  Moreover, by Lemma~\ref{lem:socV}, $A_0$  is the sum of all its simple submodules.
\end{proof}

\begin{corollary}\label{cor:ConnectedGH}
The LDAG $G$ is connected if and only if the LAG $H$ is connected.
\end{corollary}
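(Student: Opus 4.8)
The plan is to translate connectedness into a statement about idempotents in the relevant coordinate ring and then to show that every idempotent of $A$ already lies in $A_0=\K[H]$. Since, as recalled above, an LDAG is connected exactly when its underlying Kolchin-closed set is irreducible, $G$ is connected if and only if the reduced differential Hopf algebra $A=\K\{G\}$ is an integral domain, and in particular if and only if $A$ has no idempotents other than $0$ and $1$: a nontrivial idempotent $e$ splits $G$ into the disjoint nonempty clopen pieces $\{e=0\}$ and $\{e=1\}$, forcing $G^\circ\subsetneq G$; conversely, if $G$ is disconnected, then the decomposition $G=G^\circ\sqcup(G\setminus G^\circ)$ into clopen pieces yields a product decomposition of $A$ and hence a nontrivial idempotent. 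The identical statement holds for $H$ with coordinate ring $A_0$. As $A_0\subset A$, the implication ``$G$ connected $\Rightarrow$ $H$ connected'' is immediate, for a subring of an integral domain is an integral domain; so for the converse it suffices to prove that every idempotent of $A$ lies in $A_0$.

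To see this, let $e\in A$ satisfy $e^2=e$. Regarded as a function $G(\U)\to\U$, $e$ takes values in $\{0,1\}$, so it is the characteristic function of a Kolchin-clopen subset of $G$; being clopen, that subset is a union of connected components of $G$, i.e.\ a union of cosets of $G^\circ$, and hence $e$ is constant on the cosets of $G^\circ$. Consequently $e$ belongs to the sub-Hopf-algebra $\K\{G/G^\circ\}\subset A$ obtained by pulling functions back along the quotient morphism $G\to G/G^\circ=:F$. Now $\K\{G/G^\circ\}$ is a finite-dimensional $G$-subcomodule of $A$ on which the right-regular action factors through the finite group $F$, so as a $G$-module it is the regular representation of $F$; since $\K$ has characteristic zero, every representation of $F$ is semisimple (Maschke), and hence $\K\{G/G^\circ\}$ is a semisimple $G$-module. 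Lemma~\ref{lem:socV} then gives $\K\{G/G^\circ\}=\big(\K\{G/G^\circ\}\big)_0$, and by~\eqref{eq:Restriction} the right-hand side equals $\K\{G/G^\circ\}\cap A_0$. Therefore $\K\{G/G^\circ\}\subset A_0$ and, in particular, $e\in A_0$, which completes the proof.

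The step requiring the most care — the only genuine obstacle — is the geometric passage from an arbitrary idempotent to an element of the semisimple submodule $\K\{G/G^\circ\}$: one uses that $G^\circ$ has finite index in $G$, that the connected components of the topological group $G$ are precisely the cosets of $G^\circ$, and that $\K\{G/G^\circ\}$ embeds into $\K\{G\}$ exactly as the subalgebra of functions constant on those cosets. These are standard facts about LDAGs and their quotients; granting them, the remainder is a direct application of Lemma~\ref{lem:socV}, relation~\eqref{eq:Restriction}, and the semisimplicity of representations of finite groups in characteristic zero.
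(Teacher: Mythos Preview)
Your proof is correct and follows essentially the same approach as the paper: both arguments reduce the nontrivial implication ($H$ connected $\Rightarrow$ $G$ connected) to the observation that $\K\{G/G^\circ\}$ is a semisimple $G$-module by Maschke's theorem, and then invoke Lemma~\ref{lem:socV} (together with Proposition~\ref{prop:GMorAreFilt} or, equivalently, \eqref{eq:Restriction}) to conclude $\K\{G/G^\circ\}\subset A_0$. The only cosmetic difference is that the paper phrases the conclusion as ``the quotient map $\pi:G\to\Gamma$ extends to an epimorphism $H\to\Gamma$'' (forcing $\Gamma$ trivial when $H$ is connected), whereas you phrase it in terms of idempotents; these are equivalent readings of the same inclusion $\K\{G/G^\circ\}\subset A_0$.
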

\begin{proof}
If $G$ is Kolchin connected and $$A = \K\{G\} = \K\{\GL(W)\}/\ip = \K\{X_{ij},1/\det\}/\ip,$$ then the differential ideal $\ip$ is prime \cite[p.~895]{Cassidy}. Since, by \cite[p.~897]{Cassidy}, $$A_0 = \K[H] = \K[\GL(W)]\big/(\ip\cap \K[\GL(W)])= \K[X_{ij},1/\det]\big/(\ip\cap \K[X_{ij},1/\det])$$ and the ideal $\ip\cap \K[X_{ij},1/\det]$ is prime, $H$ is Zariski connected.

Set $\Gamma:=G/G^\circ,$ which is finite. Denote the quotient map by
$$
\pi: G\to\Gamma.
$$ Since $\Gamma$ is finite and $\Char \K =0$, $B:=\K\{\Gamma\}\in\Rep\Gamma$ is semisimple. Then $B$ has a structure of a semisimple $G$-module via $\pi$. Therefore, by Lemma~\ref{lem:socV}, $B=B_0$. Since $\pi^*$ is a homomorphism of $G$-modules, by Proposition~\ref{prop:GMorAreFilt},
$$
\pi^*(B)=\pi^*(B_0)\subset A_0=\K[H].
$$
This means that $\pi$ is a restriction of an epimorphism $H\to\Gamma$, which completes the proof.
\end{proof}

For the $\Delta$-field $\K$, denote the underlying abstract field endowed with the trivial differential structure ($\partial_lk=0$, $1\Le l\Le m$) by $\widetilde{\K}$.

\begin{proposition}\label{prop:IntDomain}
Suppose that the LDAG $G$ is connected. If $x\in A_i$, $y\in A_j$ and $xy\in A_{i+j-1}$, then either $x\in A_{i-1}$ or $y\in A_{j-1}$. 
\end{proposition}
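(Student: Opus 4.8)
The plan is to reformulate the statement as the assertion that the associated graded ring $\gr A := \bigoplus_{n \ge 0} A_n/A_{n-1}$ (with $A_{-1} := \{0\}$) is an integral domain. Indeed, for $x \in A_i$, $y \in A_j$ the hypothesis $xy \in A_{i+j-1}$ and the two possible conclusions say precisely that the homogeneous classes $\bar x \in A_i/A_{i-1}$ and $\bar y \in A_j/A_{j-1}$ cannot both be nonzero while $\bar x \bar y = 0$; thus the Proposition is equivalent to $\gr A$ having no nonzero homogeneous zero divisors, and an $\N$-graded ring with that property is automatically an integral domain (compare the lowest nonzero homogeneous components of a hypothetical vanishing product).

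By Proposition~\ref{prop:PropertiesOfAn}, together with the fact (established in its proof) that $A_0$ is a Hopf subalgebra of $A$, the chain $A_0 \subset A_1 \subset \cdots$ is a Hopf algebra filtration of $A$; hence $\gr A$ is a commutative graded Hopf algebra over $\K$ with $(\gr A)_0 = A_0 = \K[H]$, where $H := \overline{G}$. (One may even regard $\gr A$ as a differential Hopf algebra over $\widetilde{\K}$, since the induced derivations raise degree by one and annihilate $\K \subset (\gr A)_0$, but only the commutative Hopf algebra structure is needed.) Since $G$ is connected, so is $H$ by Corollary~\ref{cor:ConnectedGH}; as recorded in that proof, $A_0 = \K[H]$ is then an integral domain.

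The key step is a reduction to finitely generated pieces: $\gr A$ is the directed union of its finitely generated graded Hopf subalgebras. Given finitely many homogeneous elements of $\gr A$, they lie in the subcoalgebra they generate, which is finite dimensional by the fundamental theorem of coalgebras and is graded — its spanning matrix coefficients can be computed by pairing with homogeneous linear functionals on $\gr A$, and homogeneous matrix coefficients of homogeneous elements are homogeneous — so the graded subalgebra generated by a basis of that subcoalgebra is a finitely generated graded Hopf subalgebra $B$ containing the given elements. For such a $B$: the degree-zero component $B_0$ is a subring of the domain $A_0$ and so has no nontrivial idempotents; as $B$ is $\N$-graded with $B_0$ of this kind, $B$ itself has no nontrivial idempotents (the degree-zero part of an idempotent is $0$ or $1$, and then the lowest positive-degree component of the corresponding idempotent with vanishing constant term must itself vanish), so $\operatorname{Spec} B$ is connected. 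Because $\Char \K = 0$, $B$ is reduced by Cartier's theorem, whence $\operatorname{Spec} B$ is a smooth (in particular regular) Noetherian affine $\K$-group scheme; a connected regular Noetherian scheme is irreducible, so $B$ is an integral domain. Finally, a directed union of integral domains is an integral domain, so $\gr A$ is one, proving the Proposition.

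The point I expect to require the most care is the reduction to finitely generated graded Hopf subalgebras — in particular, checking that the (necessarily finite-dimensional) subcoalgebra generated by homogeneous elements may be taken graded, so that the exhausting subalgebras $B$ inherit the grading and hence the connectedness forced by $H$. The other substantive input, that a connected affine group scheme of finite type in characteristic zero has an integral coordinate ring, is precisely Cartier's smoothness theorem combined with the elementary fact that a connected regular Noetherian scheme is irreducible.
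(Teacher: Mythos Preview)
Your argument is correct and takes a genuinely different route from the paper's. Both proofs begin by reformulating the statement as the integrality of $\gr A$ and both invoke Corollary~\ref{cor:ConnectedGH} to get that $A_0=\K[H]$ is a domain, but they diverge from there.

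The paper works explicitly with the presentation $A={}_\K B/I$ where $_\K B=\K\{\GL_d\}$ carries a natural grading, forms the ``leading term'' ideal $\widetilde I$, checks by hand that $\widetilde I$ is a Hopf ideal with $\gr A\cong{}_\K B/\widetilde I$, and then observes that over $\widetilde\K$ (the field $\K$ with trivial derivations) $\widetilde I$ becomes a \emph{differential} Hopf ideal. Thus $\gr A$ is realized as the coordinate ring of an LDAG $\widetilde G$ over $\widetilde\K$ that is Zariski-dense in $H$; Corollary~\ref{cor:ConnectedGH} then makes $\widetilde G$ connected, whence $\gr A$ is a domain. Your proof bypasses this construction entirely: you use only that $\gr A$ is a commutative graded Hopf algebra in characteristic zero with connected degree-zero part, exhaust it by finitely generated graded Hopf subalgebras $B$, and appeal to Cartier's theorem (reducedness, hence smoothness) together with the graded idempotent argument (connectedness) to conclude each $B$ is integral. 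Your approach is shorter and more conceptual; the paper's approach yields the extra structural information that $\gr A$ is itself an LDAG coordinate ring over $\widetilde\K$, which is of independent interest even though it is not used later.

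One small point you should make explicit: when you pass from the finite-dimensional graded subcoalgebra $C$ to the subalgebra it generates, that subalgebra is only a sub\emph{bialgebra} a priori (e.g.\ $\K[y]\subset\K[y,y^{-1}]$), and Cartier's theorem needs the antipode. The fix is routine---replace $C$ by $C+S(C)$, which is still finite-dimensional and graded because the antipode of a graded bialgebra is automatically graded---but since you flagged the gradedness of $C$ as the delicate point, you should also flag closure under $S$.
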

\begin{proof}
We need to show that the graded algebra
$$
\gr A:=\bigoplus_{n\in\N} A_n/A_{n-1}
$$
is an integral domain. Note that $\gr A$ is a differential algebra via
$$
\partial_l(x+A_{n-1}):=\partial_l(x)+A_{n},\quad x\in A_n.
$$
Furthermore, to a homomorphism $\nu: B\to C$ of filtered algebras such that $\nu(B_n)\subset C_n$, $n\in\N$, there corresponds the homomorphism
$$
\gr\nu:\gr B\to\gr C,\quad x+B_{n-1}\mapsto \nu(x)+C_{n-1},\quad x\in B_n.
$$
Let us identify $\GL(W)$ with $\GL_d$, $d:=\dim W$, and set
$$
B:=\Q\big\{x_{ij},1/\det\big\},
$$
the coordinate ring of $\GL_d$ over $\Q$. The algebra $B$ is graded by
$$
\overline{B}_n:=\Span_{\Q}\left\{\prod_{j\in J}\theta_j y_j \:\Big|\: J\ \text{is a finite set},\  y_j\in \Q[\GL_d],\ \theta_j\in\Theta,\ \sum_{j\in J}\ord(\theta_j)=n\right\},\quad n\in\N.
$$ 
The $W$-filtration of $B$ is then associated with this grading:
$$
B_n=\bigoplus_{i=0}^n\overline{B}_i.
$$
For a field extension $\Q\subset L$, set $_LB:=B\otimes_{\Q} L$, a Hopf algebra over $L$. Then the algebra $_LB$ is graded by $_L\overline{B}_n:=\overline{B}_n\otimes L$.

Let $I$ stand for the Hopf ideal of $_\K B$ defining $G\subset\GL_d$. For $x\in{} _\K B$, let $x_h$ denote the highest degree component of $x$ with respect to the grading $\big\{{}_\K\overline{B}_n\big\}$. Let $\widetilde{I}$ denote the $\K$-span of all $x_h$, $x\in I$. 
As in the proof of Proposition~\ref{prop:PropertiesOfAn}, we conclude that, for all $n \in \N$, 
\begin{equation}\label{eq:Bnbar}
\Delta\big(\overline{B}_n\big)\subset \sum_{i=0}^n\overline{B}_i\otimes_\K\overline{B}_{n-i}. 
\end{equation}
Since $\Delta(I) \subset I\otimes_\K B+B\otimes_\K I$, inclusion~\eqref{eq:Bnbar} implies that, for all $n\in\N$ and $x \in I\cap B_n$, 
$$
I\otimes_\K B_n+B_n\otimes_\K I \ni \Delta(x)= \Delta(x-x_h)+\Delta(x_h)\in \left(\sum_{i=0}^{n-1}B_{i}\otimes_\K B_{n-i-1}\right)\oplus\left(\sum_{i=0}^n\overline{B}_i\otimes_\K\overline{B}_{n-i}\right).
$$
Hence,  by induction, one has$$
\Delta(x_h) \in\widetilde{I}\otimes_\K B_n+B_n\otimes_\K\widetilde I \subset \widetilde I\otimes_\K B+B\otimes_\K\widetilde I.
$$
We have $S(I)\subset I$, where $S: B\to B$ is the antipode. Moreover, since $S(B_0)=B_0$ and $S$ is differential,
$$
S\big(\overline{B}_n\big) \subset \overline{B}_n,\ n\in\N.
$$
Hence,
$$
S(x_h) = S(x_h-x+x)=S(x_h-x)+S(x)\in (B_{n-1}+I)\cap \overline{B}_n,
$$
which implies that $$S\big(\widetilde I\big)\subset\widetilde I.$$
Therefore, $\widetilde{I}$ is a Hopf ideal of $_\K B$ (not necessarily differential!). Consider the algebra map
$$
\alpha:{}_\K B\stackrel{\beta}{\simeq}\gr _\K B\stackrel{\gr\varphi}{\longrightarrow}\gr A,
$$
where $\beta$ is defined by the sections $$_\K \overline{B}_n\to{} _\K B_n\big/{}_\K B_{n-1},\quad n\in\N,$$ and $\varphi$ is given by~\eqref{eq:varphi}. 
For every $x \in I$, let $n\in\N$ be such that $x_h \in \overline{B}_n$. Then $$\varphi(x_h)=\varphi(x_h-x+x)=\varphi(x_h-x)+\varphi(x)=\varphi(x_h-x)+0\in A_{n-1}.$$
Hence, $$\widetilde{I} \subset \Ker\alpha.$$ On the other hand, let $\alpha(x)=0$. Then there exists $n \in \N$ such that, for all $i$, $0\Le i\Le n$, if $x_i \in \overline{B}_i$ satisfy $\beta(x) = x_0+\ldots+x_n$, then
$$
\varphi(x_i) \in A_{i-1},$$
which implies that there exists $y_i \in I\cap B_i$ such that
$$
x_i -y_i\in B_{i-1}.
$$
Therefore, $\beta^{-1}(x_i) \in \widetilde{I}$, implying that $$\Ker\alpha\subset\widetilde{I}.$$
Thus, $\alpha$ induces a Hopf algebra structure on $\gr A$. (In general, if $A$ is a filtered Hopf algebra, then $\gr A$ can be given (in a natural way) a structure of a graded Hopf algebra; see, e.g.,~\cite[Chapter~11]{sweedler}.)  Consider the identity map (This map is differential if and only if $\K $ is constant.)
$$
\gamma:{} _{\widetilde{\K}}B\to{} _\K B
$$
of Hopf algebras. Since
$$
\gamma\big({}_{\widetilde{\K}}\overline{B}_n\big)={}_\K \overline{B}_n,
$$
$J:=\gamma^{-1}\big(\widetilde{I}\big)$ is a Hopf ideal of $_{\widetilde{\K }}B$. Moreover, it is differential, since 
$$
\partial_l\big(x_h\big)=\big(\partial_lx\big)_h,\ x\in{} _{\widetilde{\K }}B.
$$
Therefore, $\gr A$ has a structure of a differential Hopf algebra over~$\widetilde{\K }$. Furthermore it is differentially generated by the Hopf algebra $A_0\subset\gr A$. In other words, $\gr A$ is isomorphic to the coordinate algebra of an LDAG $\widetilde{G}$ (over $\widetilde{\K }$) dense in $H$. By Corollary~\ref{cor:ConnectedGH}, $\widetilde{G}$ is connected. Hence, $\gr A$ has no zero divisors.
\end{proof}

\subsection{Subalgebras generated by $W$-filtrations}
For $n\in\N$, let $A_{(n)}\subset A$ denote the subalgebra generated by $A_n$. Since $A_n$ is a subcoalgebra of $A$, it follows that $A_{(n)}$ is a Hopf subalgebra of $A$. Note that $\big\{A_{(n)},\ n\in\N\big\}$ forms a filtration of the vector space $A$. We will prove the result analogous to Proposition~\ref{prop:IntDomain}.

\begin{proposition}\label{prop:SubalgFilt}
Suppose that $G$ is connected. If $x\in A_{(n)}$, $y\in A_{(n+1)}$, and $xy\in A_{(n)}$, then $y\in A_{(n)}$.
\end{proposition}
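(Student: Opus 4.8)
The plan is to mimic the proof of Proposition~\ref{prop:IntDomain}, but with a change of tools, since the graded algebra $\bigoplus_{n}A_{(n)}/A_{(n-1)}$ attached to the subalgebra filtration is \emph{not} a domain (for instance every element of $A_{(1)}/A_{(0)}$ squares to zero, because $A_{(1)}$ is closed under multiplication), so the argument of Proposition~\ref{prop:IntDomain} does not transfer. Instead I would reduce to a faithful flatness property of Hopf subalgebras. It is enough to show that the $A_{(n)}$-module $A_{(n+1)}/A_{(n)}$ is torsion free: then, for $0\neq x\in A_{(n)}$, $y\in A_{(n+1)}$ with $xy\in A_{(n)}$, we get $x\cdot\bar y=0$ in $A_{(n+1)}/A_{(n)}$, whence $\bar y=0$, that is, $y\in A_{(n)}$ (we may assume $x\neq 0$).

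To get torsion freeness I would first note that each $A_{(n)}$ is a \emph{finitely generated} Hopf subalgebra of $A$: by Proposition~\ref{prop:PropertiesOfAn}, $A$ is differentially generated by $A_0=\K[H]$, and $H$ is a LAG, so $A_0$ is finitely generated; using the Leibniz rule one sees that $A_{(n)}$ is generated as a $\K$-algebra by the finite set of all $\theta y$ with $\ord\theta\Le n$ and $y$ in a fixed finite generating set of $A_0$. Since $G$ is connected, $A$ is a domain, hence so is $A_{(n)}$; thus $A_{(n)}$ is the coordinate ring of a connected affine algebraic group $G_n$, and the inclusion $A_{(n)}\hookrightarrow A_{(n+1)}$ is dual to a homomorphism $G_{n+1}\to G_n$ of connected algebraic groups over $\K$, which is surjective because the corresponding map of coordinate rings is injective. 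A surjective homomorphism of affine algebraic groups is faithfully flat on coordinate rings (the algebraic-group case of Takeuchi's theorem that a commutative Hopf algebra is faithfully flat over any Hopf subalgebra; cf.\ \cite{Waterhouse}). Hence $A_{(n+1)}$ is a faithfully flat $A_{(n)}$-module, so $xA_{(n+1)}\cap A_{(n)}=xA_{(n)}$ for every $0\neq x\in A_{(n)}$, and $x$ is a non-zero-divisor in $A_{(n+1)}$.

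The proof then finishes at once: given $xy\in A_{(n)}$ with $0\neq x\in A_{(n)}$ and $y\in A_{(n+1)}$, we have $xy\in xA_{(n+1)}\cap A_{(n)}=xA_{(n)}$, so $xy=xy'$ with $y'\in A_{(n)}$, and cancelling $x$ gives $y=y'\in A_{(n)}$. The one non-routine point, and the real difference from Proposition~\ref{prop:IntDomain}, is the faithful flatness input; the graded-ring argument used there is unavailable here because of the nilpotents noted above.
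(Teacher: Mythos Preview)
Your argument is correct, and your opening observation is a nice one: the associated graded for the $\{A_{(n)}\}$ filtration genuinely has nilpotents, so the domain trick of Proposition~\ref{prop:IntDomain} cannot be reused verbatim. The faithful-flatness route you take is sound: $A_{(n)}\hookrightarrow A_{(n+1)}$ is an inclusion of finitely generated commutative Hopf algebras over a field, hence faithfully flat, and from this $xA_{(n+1)}\cap A_{(n)}=xA_{(n)}$ follows exactly as you say.

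The paper's proof reaches the same conclusion by a closely related but more classical device. It also introduces the LAGs $G_n$ with coordinate rings $A_{(n)}$ and the epimorphism $G_{n+1}\to G_n$, but instead of invoking faithful flatness it uses the identification $A_{(n)}=A_{(n+1)}^K$, where $K$ is the kernel of this epimorphism. Then one simply passes to the fraction field $\Quot A_{(n+1)}$: since $x$ and $xy$ are $K$-invariant, so is $y=(xy)/x$, and $(\Quot A_{(n+1)})^K\cap A_{(n+1)}=A_{(n+1)}^K=A_{(n)}$. Your approach and the paper's draw on the same circle of ideas---the fact $B=C^K$ for an inclusion of Hopf algebras is essentially equivalent to faithful flatness---but the paper's version avoids the module-theoretic detour and needs nothing beyond the quotient field, which makes it a line or two shorter. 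Your version has the virtue of making the structural input (Takeuchi/Waterhouse) explicit and of explaining clearly why the graded-ring strategy is unavailable here.
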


\begin{proof}
Let $G_n$, $n\in\N$, stand for the LAG with the (finitely generated) Hopf algebra $A_{(n)}$. Since $A_{(n)}\subset A$ and $A$ is an integral domain, $A_{(n)}$ is an integral domain.  Let $G_{n+1}\to G_{n}$ be the epimorphism of LAGs that corresponds to the embedding $A_{(n)}\subset A_{(n+1)}$ and $K$ be its kernel. Then we have
$$
A_{(n)}=A_{(n+1)}^K.
$$
Denote $A_{(n+1)}$ by $B$. We have $$x\in B^K,\ \ y\in B,\ \ \text{and} \ \ xy\in B^K.$$ Let us consider this relation in $\Quot B\supset B$. We have
$$
y\in(\Quot B)^K\cap B=B^K.
$$
Thus, $y\in A_{(n)}$.
\end{proof}

For $s,t\in\N$, set
$$
A_{s,t}:=A_s\cap A_{(t)}.
$$
Since $A_n\subset A_{(n)}$, $A_{s,t}=A_s$ if $s\Le t$. Also, $A_{s,0}=A_0$ for all $s\in\Z_+$. Therefore, one may think of $A_{s,t}$ as a filtration of the $G$-module $V$, where the indices are ordered by the following pattern:
\begin{equation}\label{eq:OrderIndice}
(0,0)=0<(1,1)=1<(2,1)<(2,2)=2<(3,1)<(3,2)<\ldots.
\end{equation}
(Note that $t=0$ implies $s=0$.)
We also have
\begin{equation}\label{eq:ProdAst}
A_{s_1,t_1}A_{s_2,t_2}\subset A_{s_1+s_2,\max\{t_1, t_2\}}
\end{equation}

\begin{theorem}\label{thm:ProductInA}
Let $x_i\in A$, $1\Le i\Le r$, and $x:=x_1x_2\cdot\ldots\cdot x_r\in A_{s,t}$. Then, for all $i$, $1\Le i\Le r$, there exist $s_i,\, t_i\in\N$ such that $x_i\in A_{s_i, t_i}$ and 
$$
\sum_is_i\Le s\quad\text{and}\quad\max_i\{t_i\}\Le t.\qedhere
$$
\end{theorem}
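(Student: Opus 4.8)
The plan is to prove Theorem~\ref{thm:ProductInA} by a double induction: an outer induction on the number of factors $r$, reducing immediately to the case $r=2$, and then an induction on the ordered index $(s,t)$ using the well-ordering~\eqref{eq:OrderIndice}. So suppose $x=yz$ with $x\in A_{s,t}$; we must find $s_1,t_1,s_2,t_2$ with $y\in A_{s_1,t_1}$, $z\in A_{s_2,t_2}$, $s_1+s_2\Le s$, and $\max\{t_1,t_2\}\Le t$. First I would dispose of the ``$s$-part'' and the ``$t$-part'' somewhat separately, since $A_{s,t}=A_s\cap A_{(t)}$: Proposition~\ref{prop:IntDomain} (applied inductively, exactly as $\gr A$ being a domain is used) controls the $A_s$-filtration of a product, giving $s_1,s_2$ with $y\in A_{s_1}$, $z\in A_{s_2}$, $s_1+s_2\Le s$; and Proposition~\ref{prop:SubalgFilt} together with the obvious fact that $x\in A_{(t)}$ and the definition of $A_{(t)}$ should give, after an induction on $t$, that $y\in A_{(t_1)}$ and $z\in A_{(t_2)}$ with $\max\{t_1,t_2\}\Le t$. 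The point requiring care is that these two decompositions must be made \emph{compatible}, i.e. one needs $y$ to lie in $A_{s_1}\cap A_{(t_1)}=A_{s_1,t_1}$ simultaneously, not merely in each separately for possibly unrelated reasons.

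To get compatibility I would argue inside $\gr A$, which by Proposition~\ref{prop:IntDomain} is a graded integral domain, and which by the discussion there carries a filtration induced by the subalgebras $A_{(t)}$. Concretely, write $y\in A_{p}\setminus A_{p-1}$ and $z\in A_{q}\setminus A_{q-1}$ for the \emph{exact} $W$-orders $p,q$; then the images $\bar y\in A_p/A_{p-1}$, $\bar z\in A_q/A_{q-1}$ are nonzero, so their product $\bar y\bar z$ is nonzero in $\gr A$, forcing $x=yz\notin A_{p+q-1}$, hence $p+q\Le s$. This is the mechanism already used implicitly; I would then observe that the induced grading interacts with the $A_{(t)}$-filtration so that if $x\in A_{(t)}$ its leading term lies in the image of $A_{(t)}$, and peel off factors. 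The cleanest route is probably: reduce to $r=2$; use the exact-order trick to fix $s_1=p$, $s_2=q$ with $p+q\Le s$; then, \emph{with these $s_i$ fixed}, run the induction on $t$ via Proposition~\ref{prop:SubalgFilt} to show $y\in A_{p,t_1}$, $z\in A_{q,t_2}$ with $\max\{t_1,t_2\}\Le t$, using at the base case $t=0$ that $A_{s,0}=A_0$ and $A_0$ is a Hopf subalgebra (so $y,z\in A_0$ since $\gr A$-in-degree-$0$, i.e. $H$, is a domain and the product lies in $A_0$).

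In more detail for the $t$-induction: assuming $xy\in A_{(n)}$ one wants $y\in A_{(n)}$ when $x\in A_{(n)}$ — but here both factors are unknowns, so I would instead induct using~\eqref{eq:ProdAst}: if $y\in A_{(n+1)}\setminus A_{(n)}$ and $z\in A_{(\ell)}$, then Proposition~\ref{prop:SubalgFilt} (with $z$ playing the role of the ``$x\in A_{(n)}$'' when $\ell\Le n$, after swapping) would force a contradiction unless $\max\{t_1,t_2\}$ does not exceed $t$; one pushes this down the well-order~\eqref{eq:OrderIndice} until reaching an index $(s',t')$ at which both factors already satisfy the bound, the base being $(0,0)$. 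Finally I would assemble: $y\in A_p\cap A_{(t_1)}=A_{p,t_1}$ and $z\in A_q\cap A_{(t_2)}=A_{q,t_2}$ with $p+q\Le s$ and $\max\{t_1,t_2\}\Le t$, which is the assertion for $r=2$; the general $r$ follows by grouping $x=(x_1\cdots x_{r-1})x_r$ and applying the $r=2$ case together with the inductive hypothesis, noting that $\sum_{i<r}s_i'\Le s_1'$ and $\max_{i<r}t_i'\Le t_1'$ combine correctly.

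The main obstacle I anticipate is precisely the compatibility/bookkeeping between the two filtration parameters $s$ and $t$: each of Propositions~\ref{prop:IntDomain} and~\ref{prop:SubalgFilt} controls one parameter in isolation, and the danger is that the factorization $y$ witnessing the $s$-bound is not the same as the one witnessing the $t$-bound. Resolving this cleanly requires working with the \emph{exact} orders (in the domain $\gr A$ and in the $A_{(t)}$-filtration simultaneously) rather than with mere membership, and verifying that ``take the leading $W$-graded component'' commutes with ``intersect with $A_{(t)}$'' — i.e. that the graded domain $\gr A$ inherits a compatible filtration by the images of the $A_{(t)}$, so that a single leading term $\bar y$ records both its degree $p$ and the least $t_1$ with $\bar y\in$ (image of $A_{(t_1)}$). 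Once that compatibility is set up, the two propositions plug in and the induction on~\eqref{eq:OrderIndice} runs routinely.
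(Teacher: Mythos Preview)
Your approach is exactly the paper's: reduce to $r=2$ and then invoke Proposition~\ref{prop:IntDomain} for the $s$-bound and Proposition~\ref{prop:SubalgFilt} for the $t$-bound. The paper's entire proof is those two sentences, with no further elaboration.

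The ``compatibility'' worry that occupies most of your proposal is a red herring. You are not choosing different witnesses for the two bounds: $y$ and $z$ are \emph{fixed} elements of $A$. Proposition~\ref{prop:IntDomain} gives the exact $W$-orders $p=\operatorname{ord}_1(y)$ and $q=\operatorname{ord}_1(z)$ with $p+q\Le s$; the argument via Proposition~\ref{prop:SubalgFilt} gives $y\in A_{(t_1)}$ and $z\in A_{(t_2)}$ with $\max\{t_1,t_2\}\Le t$. Since by definition $A_{p,t_1}=A_p\cap A_{(t_1)}$, membership in the intersection is automatic once both memberships hold---there is nothing to reconcile, and no need to pass to $\gr A$ or to verify that taking leading components commutes with intersecting with $A_{(t)}$. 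All of that machinery can be deleted.

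If there is a genuine subtlety, it is not compatibility but the $t$-part itself: from $yz\in A_{(t)}$ one wants both $y,z\in A_{(t)}$. Proposition~\ref{prop:SubalgFilt} (iterated) disposes of the case where $\operatorname{ord}_2(y)\ne\operatorname{ord}_2(z)$, but when both exact $(\cdot)$-orders equal some $m>t$ the hypothesis ``$x\in A_{(n)}$'' of Proposition~\ref{prop:SubalgFilt} is unavailable, and your well-order induction on~\eqref{eq:OrderIndice} does not by itself break this symmetry. The paper's two-line proof is equally terse here; one way to close the gap is to observe that $\ker(G_m\to G_t)$ is unipotent (it acts trivially on the graded pieces of $P^m(W)$), and invariants of an integral domain under a unipotent group are factorially closed (reduce to $\Ga$ and use that the kernel of a locally nilpotent derivation is factorially closed).
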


\begin{proof}
It suffices to consider only the case $r=2$. Then, Propositions~\ref{prop:IntDomain} and~\ref{prop:SubalgFilt} complete the proof.
\end{proof}

For $V\in\TRep G$ and $n\in\N$, let $V_{(n)}$ denote the largest submodule $U$ of $V$ such that $\varrho_V(U)\subset U\otimes A_{(n)}$. (If $V=A$, then $V_{(n)}=A_{(n)}$, which follows from~\eqref{eq:UnitRelation}.) Similarly, we define $V_{s,t}$, $s,t\in\N$.

For a reductive LDAG $G$ and its coordinate ring $A=\K \{G\}$,  let ${\{A_n\}}_{n\in\N}$  denote  the $W$-filtration corresponding to an arbitrary faithful semisimple $G$-module $W$. This filtration does not depend on the choice of $W$ by Corollary~\ref{cor:FiltRed}. 

\begin{definition}\label{def:GV}
If $\phi: G\to L$ is a homomorphism of LDAGs and $V\in\TRep L$, then $\phi$ induces the structure of a $G$-module on $V$. This $G$-module will be denoted by $_GV$.
\end{definition}

\begin{proposition}\label{prop:QuasiIsom}
Let $\phi: G\to L$ be a homomorphism of reductive LDAGs.  Then
\begin{equation}\label{eq:phi*}
\phi^*\big(B_{s,t}\big)\subset A_{s,t},\quad s,t\in\N,
\end{equation}
where $A:=\K \{G\}$ and $B:=\K \{L\}$.
Suppose that $\Ker\phi$ is finite and the index of $\phi(G)$ in $L$ is finite. Then, for every $V\in\TRep L$, 
\begin{equation}\label{eq:phi*^-1}
V=V_{s,t}\ \Longleftrightarrow\ _GV={(_GV)}_{s,t},\quad s,t\in\N.
\end{equation}
\end{proposition}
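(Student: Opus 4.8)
The plan is to prove the two assertions in turn, the first being a direct computation with comodules and the second reducing, via a standard restriction/induction argument, to the first together with the fact that $\phi$ is a quasi-isomorphism.

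For the inclusion $\phi^*(B_{s,t})\subset A_{s,t}$, first observe that $\phi^*: B\to A$ is a homomorphism of differential Hopf algebras. Since $G$ and $L$ are reductive, choose a faithful semisimple $L$-module $W_L$; then $_GW_L$ need not be faithful, but one can always replace it by a faithful semisimple $G$-module $W$ and compare, because by Corollary~\ref{cor:FiltRed} the $W$-filtration of $A$ is independent of the choice of faithful semisimple module. The key point is that $\phi^*$ maps $A_0(L)=\K[\overline L]$ into $A_0(G)=\K[\overline G]$: this is because $\phi$ extends to a morphism of Zariski closures $\overline G\to\overline L$ (Remark~\ref{rem:GbarG} and the functoriality of $\overline{(\ )}$), so the coordinate-ring map sends polynomial functions to polynomial functions. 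Granting $\phi^*(B_0)\subset A_0$, definition~\eqref{eq:defAn} and the fact that $\phi^*$ commutes with all $\theta\in\Theta$ give $\phi^*(B_n)\subset A_n$ for every $n$ by inspecting the generating expressions $\prod_j\theta_jy_j$. For the $A_{(t)}$-part: $\phi^*$ sends the subalgebra generated by $B_t$ into the subalgebra generated by $A_t$, i.e.\ $\phi^*(B_{(t)})\subset A_{(t)}$; intersecting, $\phi^*(B_{s,t})=\phi^*(B_s\cap B_{(t)})\subset A_s\cap A_{(t)}=A_{s,t}$. This proves~\eqref{eq:phi*}.

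For~\eqref{eq:phi*^-1}, assume $\Ker\phi$ finite and $[L:\phi(G)]<\infty$. The implication $V=V_{s,t}\Rightarrow {}_GV=({}_GV)_{s,t}$ is the easy direction: the comodule map of $_GV$ is the composition of $\varrho_V$ with $\mathrm{id}_V\otimes\phi^*$, so if $\varrho_V(V)\subset\sum_{i}V_i\otimes B_{s,t}$-type containments hold (unwinding the definition of $V=V_{s,t}$ via $V_{(t)}$ and $V_s$), then applying $\mathrm{id}\otimes\phi^*$ and using~\eqref{eq:phi*} gives the corresponding containment for $_GV$, whence $_GV=({}_GV)_{s,t}$ by maximality. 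The reverse implication is the substantive one. Here the idea is: $_GV=({}_GV)_{s,t}$ means $V$, as a $G$-module, lies in the $(s,t)$-th filtration layer; we must push this back up to $L$. Since $\phi(G)$ has finite index in $L$ and $\Ker\phi$ is finite, $\phi$ is a quasi-isomorphism, and one can relate the filtrations on $B=\K\{L\}$ and on $A=\K\{G\}$ directly. Concretely, one shows that $B_{s,t}$ is recovered from $A_{s,t}$: the finite-index hypothesis lets one express (a power of) any element of $B$ in terms of $\phi^*$-images modulo the coordinate ring of the finite group $L/\phi(G)$, which sits in degree $(0,0)$ by Lemma~\ref{lem:socV} applied to the (semisimple, since $\Char\K=0$) regular representation of that finite group — exactly as in the proof of Corollary~\ref{cor:ConnectedGH}. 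Combining this with Theorem~\ref{thm:ProductInA} (to control how the filtration behaves under taking such powers/products) yields that membership of $V$ in the $(s,t)$-layer over $G$ forces membership over $L$.

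The main obstacle I anticipate is the reverse implication of~\eqref{eq:phi*^-1}: translating ``$_GV=({}_GV)_{s,t}$'' back into a statement about the $L$-comodule structure. The delicate point is that $\phi^*: B\to A$ is injective on $\phi(G)$-invariant considerations but $A$ is strictly larger than $\phi^*(B)$ unless $\phi$ is an isomorphism; one must use finiteness of $\Ker\phi$ to see that no ``extra'' low-filtration submodules are created, and finiteness of $[L:\phi(G)]$ to see that restricting to $\phi(G)$ loses no filtration information beyond a finite (hence degree-zero) part. Making the interaction between these two finiteness hypotheses precise — most cleanly by factoring $\phi$ as $G\twoheadrightarrow\phi(G)\hookrightarrow L$ and handling the surjective and the finite-index-inclusion cases separately, using Theorem~\ref{thm:ProductInA} in the first and the finite-group/semisimplicity argument in the second — is where the real work lies; the rest is bookkeeping with~\eqref{eq:Restriction}, \eqref{eq:nSum}, and the definitions of $V_s$, $V_{(t)}$, and $V_{s,t}$.
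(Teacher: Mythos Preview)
For the first inclusion~\eqref{eq:phi*}, your argument and the paper's differ at the key step. You assert that $\phi$ extends to a morphism $\overline G\to\overline L$ by ``functoriality of $\overline{(\ )}$''; this is not justified --- the Zariski closures are taken relative to \emph{fixed} faithful semisimple embeddings, and a differential homomorphism of LDAGs need not descend to an algebraic map of closures (consider the prolongation $\SL_2\to\GL_4$, which sends $y_{13}\in B_0$ to $\partial x_{11}\notin A_0$). The paper instead applies Lemma~\ref{lem:socV} to $V={}_GB_0$ together with Proposition~\ref{prop:GMorAreFilt} for the $G$-module map $\phi^*:{}_GB\to A$: granting that ${}_GB_0$ is semisimple, one gets $B_0=({}_GB_0)_0\subset({}_GB)_0$ and hence $\phi^*(B_0)\subset A_0$; differentiality of $\phi^*$ then gives the rest. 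So you should route the argument through semisimplicity of ${}_GB_0$ rather than through an unproven extension to closures.

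For~\eqref{eq:phi*^-1}, your outline matches the paper's architecture: the forward direction is immediate from~\eqref{eq:phi*}, and the reverse is handled by factoring into an injective finite-index piece and a surjective finite-kernel piece. Two gaps, though. First, the paper inserts a preliminary reduction to $G$ connected via the square with $G^\circ\to L^\circ$ (so that both cases can assume $G$ connected); you omit this. Second --- and this is the substantive point --- you seriously understate the surjective case. The paper does \emph{not} simply ``recover $B_{s,t}$ from $A_{s,t}$'' and invoke Theorem~\ref{thm:ProductInA}. It first shows $B_0=A_0^\Gamma$ (here an extension $\overline G\to\overline L$ \emph{is} available, precisely because $\phi$ is surjective with finite kernel), then introduces auxiliary $B_0$-modules
\[
\widetilde{A}_{s,t}=\big\{x\in(A_{s,t})^\Gamma\;\big|\;\exists\,0\ne b\in B_0:\ bx\in B_{s,t}\big\},
\]
and proves $\widetilde{A}_{s,t}=(A_{s,t})^\Gamma$ by induction on $s$. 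That induction exploits that $\Gamma$, being finite normal in the connected $\overline G$, is central and acts by roots of unity, so one can choose $\Gamma$-semi-invariant algebra generators of $A_0$; monomials in their derivatives are then handled via Theorem~\ref{thm:ProductInA} and a Leibniz-rule reduction of order. Only after $\widetilde{A}_{s,t}=(A_{s,t})^\Gamma$ is established does Theorem~\ref{thm:ProductInA} finish the argument. The injective case is closer to what you sketch (coset decomposition $B=\bigoplus_j g_jI$ with $I\cong A$ and compatibility of the filtration with this decomposition). So your plan has the right shape, but the surjective case carries the real content and requires the $\widetilde{A}_{s,t}$ machinery you do not mention.
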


\begin{proof}
Applying Lemma~\ref{lem:socV} to $V:=B_0$ and Proposition~\ref{prop:GMorAreFilt} to $\phi^*$, we obtain $\phi^*(B_0)\subset A_0$. Since $\phi^*$ is a differential homomorphism, relation~\eqref{eq:phi*} follows.

Let us prove the second statement of the Proposition. Note that the implication $\Rightarrow$ of~\eqref{eq:phi*^-1} follows directly from~\eqref{eq:phi*}. We will prove the implication $\Leftarrow$. It suffices to consider two cases:
\begin{enumerate}
\item $G$ is connected and $\phi$ is injective;
\item $G$ is connected and $\phi$ is surjective;
\end{enumerate} 
which follows from the commutative diagram
$$
\begin{CD}
G^\circ @>\phi|_{G^\circ} >> L^\circ\\
@VVV @VVV\\
G @>\phi >> L.
\end{CD}
$$ 
Moreover, by~\eqref{eq:DirLim} and Proposition~\ref{prop:BasicPropsA}, it suffices to consider the case of finite-dimensional $V$. By the same proposition, there is an embedding of $L$-modules
$$
\eta: V\to B^{d}, \ d:=\dim V.
$$
Then $_GV$ is isomorphic to $\phi^*_d\eta(V)$, where $\phi^*_d: B^{d}\to A^d$ is the application of $\phi^*$ componentwise. If $_GV={(_GV)}_{s,t}$, then $\phi^*_d\eta(V)\subset A_{s,t}^d$. Hence, setting $V(i)$ to be the projection of $\eta(V)$ to the $i$th component of $B^d$, we conclude $\phi^*(V(i))\subset A_{s,t}$ for all $i$, $1\Le i\Le d$. If we show that this implies $V(i)\subset B_{s,t}$, we are done. So, we will show that, if $V\subset B$, then 
$$
\phi^*(V)=\phi^*{(V)}_{s,t}\Longrightarrow V=V_{s,t}.
$$
\textbf{Case (i)}. Let us identify $G$ with $L^\circ$ via $\phi$. Suppose $L\subset\GL(U)$, where $U$ is a semisimple $L$-module. Let $g_1=1,\ldots,g_r\in L$ be representatives of the cosets of  $L^\circ$. Let $I(j)\subset B$, $1\Le j\Le r$, be the differential ideal of functions vanishing on all connected components of $L$ but $g_jL^\circ$. We have
$$
B=\bigoplus_{j=1}^rI(j)\qquad\text{and}\qquad I(j)=g_jI(1).
$$
The $G$-modules $I:=I(1)$ and $A$ are isomorphic, and the projection $B\to I$ corresponds to the restriction map $\phi^*$. The $G$-module structure on $I(j)$ is obtained  by the twist by conjugation $G\to G$, $g\mapsto g_j^{-1}gg_j$. Since a conjugation preserves the $U$-filtration of $B$, we conclude
$$
g_j(I_n)=\big(g_jI\big)_n.
$$
By Corollary~\ref{cor:ConnectedGH}, Zariski closures of connected components of $L\subset\GL(U)$ are connected components of $\overline{L} $. Therefore,
$$
B_0=\bigoplus_{j=1}^rg_j(I_0).
$$
Then $B_0\cap I=I_0$. Since $I$ is a differential ideal, $B_n\cap I=I_n$ for all $n\in\N$.
Let 
\begin{equation}\label{eq:vj}
v\in V_n\setminus{V_{n-1}}.
\end{equation} Then, for each $j$, $1\Le i \Le r$, there exists $v(j) \in I(j)$ such that
$$
v=\sum_{j=1}^rv(j).
$$
By~\eqref{eq:vj},
there exists $j$, $1\Le j\Le r$, such that $v(j)\in V_n\setminus{V_{n-1}}$. Set $$w:=g_j^{-1}v\in V_n\setminus{V_{n-1}}.$$ Then, by the above, $$\phi^*(w)\in A_n\setminus A_{n-1}.$$ We conclude that, for all $n\in\N$, $$\phi^*(V)=\phi^*(V)_n \quad\Longrightarrow\quad V=V_n.$$
Similarly, one can show that $$\phi^*(V)=\phi^*(V)_{(n)} \quad\Longrightarrow\quad V=V_{(n)}.$$ Since $V_{s,t}=V_s\cap V_{(t)}$, this completes the proof of Case (i).

\textbf{Case (ii)}.
Consider $B$ as a subalgebra of $A$ via $\phi^*$. It suffices to show
\begin{equation}\label{eq:Preimage}
A_{s,t}\cap B\subset B_{s,t}.
\end{equation} 
We have $B\subset A^{\Gamma}$, where $\Gamma:=\Ker\phi$. 

Let us show that $B_0=A_0^\Gamma$. For this, consider $G$ and $L$ as differential algebraic Zariski dense subgroups of reductive
LAGs. Since $B_0\subset A_0$, the map $\phi$ extends to an epimorphism
$$
\overline\phi:\overline{G} \to \overline{L} .
$$ 
Since $\overline\Gamma =\Gamma$, $\Gamma$ is normal in $\overline{G} $. Hence, $\overline\phi$ factors through the epimorphism
$$
\mu:\overline{G} /\Gamma\to\overline{L} .
$$
If $K$ is the image of $G$ in the quotient $\overline{G} /\Gamma$, then $\mu(K)=L$ and $\mu$ is an isomorphism on $K$. This means that $\mu^*$ extends to an isomorphism of $B=\K\{L\}$ onto $\K\{K\}$. Since $K$ is reductive, the isomorphism preserves the grading by the first part of the proposition. In particular, $\mu^*(B_0)=\K\{K\}_0$. As $K$ is dense in $\overline{G} /\Gamma$, we obtain
$$
B_0=\K \big[\overline{L} \big]=\K \big[\overline{G} /\Gamma\big]=\K \big[\overline{G} \big]^\Gamma=A_0^\Gamma.
$$
  Let us consider the following sets:
$$
\widetilde{A}_{s,t}:=\big\{x\in (A_{s,t})^\Gamma\:\big|\:\exists\, 0\ne b\in B_0\, :\,  bx\in B_{s,t}\big\},\quad s,t\in\N.
$$
These are $B_0$-submodules of $A$ (via multiplication) satisfying~\eqref{eq:ProdAst}, as one can check. Moreover, for every $l$, $1\Le l\Le m$,
\begin{equation}\label{eq:Partial_st}
\partial_l\big(\widetilde{A}_{s,t}\big)\subset\widetilde{A}_{s+1,t+1}.
\end{equation}
Indeed, let $x\in\widetilde{A}_{s,t}$, $b\in B_0$, and $bx\in B_{s,t}$. Then
$$
b^2\partial_l(x)=b(\partial_l(bx)-x\partial_l(b))=b\partial_l(bx)-(bx)\partial_l(b)\in B_{s+1,t+1}.
$$   
Hence, $$\partial_l(x)\in\widetilde{A}_{s+1,t+1}.$$
We have
$$
B_{s,t}\subset \widetilde{A}_{s,t}\subset \big(A_{s,t}\big)^\Gamma.
$$
We will show that
\begin{equation}\label{eq:TildeFilt}
\widetilde{A}_{s,t}=\big(A_{s,t}\big)^\Gamma.
\end{equation}
This will complete the proof as follows. Suppose that $$x\in B\cap A_{s,t}\subset \big(A_{s,t}\big)^\Gamma.$$ By~\eqref{eq:TildeFilt}, there exists $b\in B_0$ such that $bx\in B_{s,t}$. Then, Theorem~\ref{thm:ProductInA} implies $x\in B_{s,t}$. We conclude~\eqref{eq:Preimage}.

Now, let us prove~\eqref{eq:TildeFilt} by induction on $s$, the case $s=0$ being already considered above. Suppose, $s\Ge 1$. 
Since $\Gamma$ is  a finite normal subgroup of the connected group $\overline{G}$, it is commutative \cite[Lemma~V.22.1]{Borel}. Therefore,  every $\Gamma$-module has a basis consisting of semi-invariant vectors, that is, spanning $\Gamma$-invariant $\K $-lines. Therefore, since a finite subset of the algebra $A_0$ belongs to a finite-dimensional subcomodule and $A_0$ is finitely generated, one can choose $\Gamma$-semi-invariant generators $X:=\{x_1,\ldots, x_r\}\subset A_0$ of $A$. Note that $X$ differentially generates $A$. Since $\Gamma$ is finite, its scalar action is given by algebraic numbers, which are constant with respect to the derivations of $\K $. Hence, the actions of $\Gamma$ and $\Theta$ on $A$ commute, and an arbitrary product of elements of the form $\theta x_i$, $\theta\in\Theta$, is $\Gamma$-semi-invariant.

Let $0\neq x\in(A_{s,t})^\Gamma$. We will show that $x\in \widetilde{A}_{s,t}$.  Since a sum of $\Gamma$-semi-invariant elements is invariant if and only if each of them is invariant, it suffices to consider the case
\begin{equation}\label{eq:xyj}
x=\prod_{j\in J}\theta_j y_j,\ \theta_j\in\Theta,
\end{equation}
where $J$ is a finite set and $y_j\in X\subset A_0$. Moreover, by Theorem~\ref{thm:ProductInA},~\eqref{eq:xyj} can be rewritten to satisfy
$$
\sum_{j\in J}\ord\theta_j\Le s\quad\text{and}\quad\max_{j\in J}\big\{\ord\theta_j\big\}\Le t.
$$ 
Since $y_j$ and $\theta_jy_j$ have the same $\Gamma$-weights,
$$
y:=\prod_{j\in J}y_j\in (A_0)^\Gamma= B_0.
$$ 
Set $g:=|\Gamma|$. We have
$$
y^{g-1}x=\prod_{j\in J}y_j^{g-1}\theta_j(y_j)\in \big(A_{s,t}\big)^\Gamma
$$
and, for every $j\in J$, $$y_j^{g-1}\theta_j(y_j)\in \big(A_{\ord\theta_j}\big)^\Gamma.$$ 

If $\ord\theta_j<s$ for all $j\in J$, then, by induction, $$y_j^{g-1}\theta_j(y_j)\in\widetilde{A}_{\ord\theta_j,\ord\theta_j}$$ for all $j\in J$. This implies  $$y^{g-1}x\in\widetilde{A}_{s,t}.$$ Hence, $x\in\widetilde{A}_{s,t}$.

Suppose that there is a $j\in J$ such that $\ord\theta_j=s$. Let us set $\theta:=\theta_j$. Then, there exist $i$, $1\Le i\Le r$, and $a\in A_0$ such that
$$
x=a\theta(x_i)\in A_s^\Gamma. 
$$
It follows that
$$
ax_i\in A_0^\Gamma=B_0.
$$
We will show that $x\in\widetilde{A}_{s,s}=:\widetilde{A}_s$.
There exist $l$, $1\Le l\Le m$, and $\widetilde\theta\in\Theta$, $\ord\widetilde\theta=s-1$, such that $$\theta=\partial_l\widetilde\theta.$$ If $s=1$, then $\theta=\partial_l$ and
$$
x_i^gx=(ax_i)\big(x_i^{g-1}\partial_lx_i\big)=(ax_i)\partial_l\big(x_i^g\big)/g\in B_1\subset \widetilde{A}_1,
$$
since $x_i^g\in B_0$. Therefore, $x\in\widetilde{A}_1$.
Suppose that $s\Ge 2$. We have
$$
x=\partial_l\big(a\widetilde\theta(x_i)\big)-\partial_l(a)\widetilde\theta(x_i).
$$
Since $u:=a\widetilde\theta(x_i)\in (A_{s-1})^\Gamma$, by induction, $u\in\widetilde{A}_{s-1}$. Hence, $$\partial_l(u)\in\widetilde{A}_s.$$ Since $s\Ge 2$, we have $$1=\ord\partial_l <s\quad\text{and}\quad \ord\widetilde\theta<s.$$ Since 
$$
v:=\partial_l(a)\widetilde\theta(x_i)=x-\partial_l(u)\in A_s^\Gamma,
$$
by the above argument (for dealing with the case $\ord\theta_j<s$ for all $j\in J$), $v\in\widetilde{A}_{s}.$  Therefore, $$x=\partial_l(u)-v\in\widetilde{A}_s.\qedhere $$
\end{proof}

\section{Filtrations of $G$-modules in reductive case}\label{sec:main:bound} 
In this section, we show our main result, the bounds for differential representations of semisimple LDAGs (Theorem~\ref{thm:EqualFiltrations}) and reductive LDAGs with $\tau(Z(G^\circ)) \Le 0$  (Theorem~\ref{thm:bound}; note that  Lemma~\ref{lem4} implies that, if $\K$ is differentially closed, then  a reductive DFGG has this property).
In particular, we show that, if $G$ is a semisimple LDAG, $W$ is a faithful semisimple $G$-module, and $V\in\Rep G$, then the $W$-filtration of $V$ coincides with its socle filtration. 

\subsection{Socle of a $G$-module}
Let $G$ be an LDAG. Given a $G$-module $V$, its \emph{socle} $\soc V$ is the sum of all simple submodules of $V$. The ascending filtration $\{\soc^nV\}_{n\in\N}$ on $V$ is defined by
$$
\soc^nV\big/\soc^{n-1}V=\soc\big(V\big/\soc^{n-1}V\big),\quad\text{where } \soc^0V:=\{0\}\ \ \text{and}\ \  \soc^1V:=\soc V.
$$ 
\begin{proposition}\label{prop: PropsOfSoc}
Let $n\in\N$.
\begin{enumerate}
 \item If $\varphi: V\to W$ is a homomorphism of $G$-modules, then 
 \begin{equation}\label{eq:1}
 \varphi(\soc^nV)\subset\soc^nW.
 \end{equation}
 \item If $U, V\subset W$ are $G$-modules and $W=U+V$, then 
 \begin{equation}\label{eq:2}
 \soc^nW=\soc^nU+\soc^nV.
 \end{equation}
 \item If $V\in\Rep G$, then 
 \begin{equation}\label{eq:3}
 \soc^n\big(P_1^{i_1}\cdot\ldots\cdot P_m^{i_m}(V)\big)\subset P_1^{i_1}\cdot\ldots\cdot P_m^{i_m}\big(\soc^nV\big).
 \end{equation}
\end{enumerate}
\end{proposition}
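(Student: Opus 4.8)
The plan is to prove the three parts in order, since each builds on the previous.

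\textbf{Part (1).} The statement $\varphi(\soc^nV)\subset\soc^nW$ follows by induction on $n$. For $n=1$, this is immediate: a homomorphism of $G$-modules sends simple submodules to simple submodules or to zero, so it maps $\soc V$ into $\soc W$. For the inductive step, I would use the fact that $\varphi$ induces a homomorphism $\bar\varphi: V/\soc^{n-1}V \to W/\soc^{n-1}W$ (well-defined by the inductive hypothesis). Applying the case $n=1$ to $\bar\varphi$ gives $\bar\varphi(\soc(V/\soc^{n-1}V))\subset\soc(W/\soc^{n-1}W)$, which, unwinding the definition of $\soc^n$, is exactly $\varphi(\soc^nV)\subset\soc^nW$ modulo $\soc^{n-1}W$; combined with the inductive hypothesis this gives the claim. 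This part is routine.

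\textbf{Part (2).} Given $W=U+V$, I would again induct on $n$. The inclusion $\soc^nU+\soc^nV\subset\soc^nW$ follows from Part (1) applied to the inclusion maps $U\hookrightarrow W$ and $V\hookrightarrow W$. For the reverse inclusion, I would first handle $n=1$: any simple submodule $S$ of $W=U+V$ has its image under one of the projections... but $W$ need not be a direct sum, so instead I would argue that $\soc W$ is the maximal semisimple submodule and $\soc U+\soc V$ is semisimple (a sum of semisimple modules is semisimple), hence it suffices to see that $\soc U + \soc V$ contains every simple submodule $S\subset W$. Actually the cleanest route: the sum of \emph{all} simple submodules of $U$ together with all simple submodules of $V$ is a semisimple submodule of $W$ containing... hmm, this does not obviously contain an arbitrary simple $S\subset W$. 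The correct approach is: $\soc W$ is semisimple, so $\soc W = (\soc W\cap U') $... Let me instead use that for the surjection $U\oplus V\twoheadrightarrow W$, Part (1) gives $\soc^n(U\oplus V)\twoheadrightarrow$ image containing $\soc^n W$ is false in general; rather $\soc^n$ of the image is contained in the image of $\soc^n$. Since $U\oplus V\twoheadrightarrow W$ and $\soc^n(U\oplus V)=\soc^nU\oplus\soc^nV$ by~\eqref{eq:nSum}-type additivity for direct sums (which holds because socle commutes with finite direct sums), Part (1) applied to this surjection yields $\soc^nW\subset$ image of $\soc^nU\oplus\soc^nV = \soc^nU+\soc^nV$. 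This gives~\eqref{eq:2}.

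\textbf{Part (3).} This is the main obstacle. The key input is the exact sequence~\eqref{eq:es}: $0\to V\xrightarrow{\iota_i} P_i(V)\xrightarrow{\pi_i} V\to 0$, together with the observation that $P_i$ is an exact functor (it is an extension construction and commutes with the relevant operations). It suffices to treat a single prolongation $P_i$, i.e.\ to show $\soc^n P_i(V)\subset P_i(\soc^n V)$, and then iterate. I would argue as follows: $P_i(\soc^nV)$ is a submodule of $P_i(V)$ (by exactness of $P_i$, applied to $\soc^nV\hookrightarrow V$), and the quotient $P_i(V)/P_i(\soc^nV)\cong P_i(V/\soc^nV)$. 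By Part (1), it is enough to show that $\soc^n$ of the quotient $P_i(V/\soc^nV)$ is zero, i.e.\ that $\soc^n P_i(U)=\{0\}$ whenever $\soc^n U=\{0\}$ — equivalently, that $P_i$ raises socle length by at most... no, that is false, since $P_i$ of a simple module is generally a nonsplit self-extension, which has socle length $2$. So the naive bound fails and instead one must track things more carefully: the point is that $\soc^nP_i(V)\subset P_i(\soc^{n}V)$ \emph{with the same $n$} because of how $\iota_i,\pi_i$ interact with socles. Concretely, from the exact sequence, $\soc^n P_i(V)$ maps under $\pi_i$ into $\soc^n V$ (Part 1), and its intersection with $\iota_i(V)=\ker\pi_i$ lies in $\iota_i(\soc^n V)$ since $\iota_i$ identifies $V$ with a submodule and socle filtration restricts to submodules (the analogue of~\eqref{eq:Restriction} for socle: $\soc^n(U)=U\cap\soc^n(W)$ for $U\subset W$... this is true for socle by a standard argument). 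Hence every element of $\soc^nP_i(V)$ is congruent mod $\iota_i(\soc^nV)$ to something mapping into $\soc^nV$; chasing through the explicit description $\pi_i(a\otimes u+b\partial_i\otimes v)=bv$ and $\iota_i(v)=1\otimes v$, one gets $\soc^nP_i(V)\subset P_i(\soc^nV)$. The delicate point I expect to be the real work is justifying $\soc^n(U)=U\cap\soc^n(W)$ for submodules and the exactness/left-exactness properties of $P_i$ needed to make the diagram chase rigorous; I would isolate these as preliminary observations before the chase. Finally, iterating over $i_1,\dots,i_m$ gives~\eqref{eq:3}.
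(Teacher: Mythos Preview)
Your Part~(1) is fine and matches the paper.

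\textbf{Part~(2) has a genuine error.} Your final argument applies Part~(1) to the surjection $U\oplus V\twoheadrightarrow W$ and concludes $\soc^nW\subset\soc^nU+\soc^nV$. But Part~(1) only says $\varphi(\soc^n(\text{source}))\subset\soc^n(\text{target})$; applied to this surjection it gives $\soc^nU+\soc^nV\subset\soc^nW$, the inclusion you already had. The reverse inclusion is simply \emph{false} for general surjections: for a nonsplit extension $0\to S\to M\to S\to 0$ with $S$ simple, the surjection $M\to S$ has $\soc S=S$ while the image of $\soc M$ is $0$. The paper's route is different: it uses the restriction identity $\soc^nU=U\cap\soc^nW$ for $U\subset W$ (which you yourself invoke later in Part~(3)), then passes to $W/\soc^nW=(U/\soc^nU)+(V/\soc^nV)$ and applies the $n=1$ case inductively.

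\textbf{Part~(3): your diagram chase does not close.} You obtain the two containments $\pi_i(X)\subset\soc^nV$ and $X\cap\iota_i(V)\subset\iota_i(\soc^nV)$ for $X:=\soc^nP_i(V)$, and then assert $X\subset P_i(\soc^nV)$. But these two conditions alone do not force that: take $V$ with basis $e_1,e_2$, $\soc^nV=\langle e_1\rangle$, and the one-dimensional subspace $X=\langle 1\otimes e_2+\partial_i\otimes e_1\rangle$; both conditions hold yet $X\not\subset P_i(\soc^nV)$. What is missing is a further use of the $G$-module structure. The paper's trick is short: from $\pi_i(X)\subset\soc^nV$ one gets $X\subset\pi_i^{-1}(\soc^nV)=P_i(\soc^nV)+\iota_i(V)$; now re-apply $\soc^n$ using idempotence and Part~(2) to obtain
\[
X=\soc^nX\subset\soc^n\big(P_i(\soc^nV)+\iota_i(V)\big)=\soc^nP_i(\soc^nV)+\soc^n\iota_i(V)\subset P_i(\soc^nV)+\iota_i(\soc^nV)=P_i(\soc^nV).
\]
So Part~(2) is not just an independent statement but the engine that makes Part~(3) go; you need to fix it first.
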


\begin{proof}
Let $\varphi: V\to W$ be a homomorphism of $G$-modules. Since the image of a simple module is simple,
$$
\varphi(\soc V)\subset\soc W.
$$
Suppose by induction that
$$
\varphi\big(\soc^{n-1} V\big)\subset\soc^{n-1} W.
$$
Set $\bar V:=V\big/\soc^{n-1}V$, $\bar W:=W\big/\soc^{n-1}W$. We have the commutative diagram:
$$
\begin{CD}
V @>\varphi >> W\\
@VV\pi_{V} V @VV\pi_{W} V\\
\bar V @>\bar\varphi >> \bar W,
\end{CD}
$$
where $\pi_V$ and $\pi_W$ are the quotient maps. Hence,
$$
\varphi\big(\soc^nV\big)\subset\pi_W^{-1}\bar\varphi\pi_V\big(\soc^nV\big)=\pi_W^{-1}\bar\varphi\big(\soc \bar V\big)\subset\pi_W^{-1}\soc\bar W=\soc^nW,
$$
where we used $\bar\varphi\big(\soc\bar V\big)\subset\soc\bar W$.
Let us prove~\eqref{eq:2}. Let $U, V\subset W$ be $G$-modules. It follows immediately from the definition of the socle that
$$
\soc (U+V)=\soc U+\soc V.
$$
Note that, by~\eqref{eq:1}, $V\cap \soc^nW = \soc^nV$.
We have
$$
W/\soc^nW=\big(U\big/\soc^nW\big)+\big(V\big/\soc^nW\big)=\big(U\big/\soc^nU\big)+\big(V\big/\soc^nV\big).
$$
Applying $\soc$, we obtain statement~\eqref{eq:2}.

In order to prove~\eqref{eq:3}, it suffices to do it only for $P_i(V)$, since the other cases would follow by induction. 
Let
$$
\pi_i: P_i(V)\to V
$$
be the natural epimorphism from~\eqref{eq:es}. We have $\pi_i^{-1}(U)=P_i(U)+V$  for all submodules $U\subset V$. Hence, by~\eqref{eq:1},
$$
\soc^n P_i(V)\subset\pi_i^{-1}\big(\soc^nV\big)=P_i\big(\soc^nV\big)+V.
$$
Since $\soc^n\soc^nM=\soc^nM$ for an arbitrary module $M$,
$$
\soc^n P_i(V)=\soc^n\soc^n P_i(V)\subset \soc^n\big(P_i\big(\soc^nV\big)+V\big)\subset P_i\big(\soc^n V\big)+\soc^nV= P_i\big(\soc^n V\big).\qedhere
$$
\end{proof}

\begin{proposition}\label{prop:SocTensor}
Suppose that
$$
\soc(U\otimes V)=(\soc U)\otimes(\soc V)
$$
for all $U, V\in\Rep G$. Then
\begin{equation}\label{eq:SocTensor}
\soc^n(U\otimes V)=\sum_{i=1}^n\big(\soc^iU\big)\otimes\big(\soc^{n+1-i}V\big)
\end{equation}
for all $U, V\in\Rep G$ and $n\in\N$.
\end{proposition}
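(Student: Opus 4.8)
The natural first attempt is induction on $n$ with base case the hypothesis, but it stalls because $(U\otimes V)\big/\soc^n(U\otimes V)$ is not a tensor product and the hypothesis cannot be re-applied to it. The plan is therefore to prove the two inclusions in~\eqref{eq:SocTensor} separately: "$\supseteq$" holds unconditionally (given the hypothesis), and "$\subseteq$" then follows by induction on the sum of the Loewy lengths of $U$ and $V$, with "$\supseteq$" fed in as an auxiliary tool. I would first record three elementary facts: (a) a tensor product $X\otimes Y$ of semisimple $G$-modules is semisimple, since the hypothesis gives $\soc(X\otimes Y)=\soc X\otimes\soc Y=X\otimes Y$; (b) $\soc^j\big(M\big/\soc^k M\big)=\soc^{j+k}M\big/\soc^k M$ for any $G$-module $M$ and $j,k\in\N$ (immediate induction on $j$ from the definition of the socle filtration); and (c) if $M$ has a filtration $0=M_0\subset M_1\subset\ldots\subset M_r=M$ with every $M_i/M_{i-1}$ semisimple, then $\soc^r M=M$ (induction on $r$, using Proposition~\ref{prop: PropsOfSoc}(1) and the fact that quotients of semisimple modules are semisimple). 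I also use $\soc^n M\cap L=\soc^n L$ for any submodule $L\subset M$, as noted in the proof of Proposition~\ref{prop: PropsOfSoc}, and the exactness of $\otimes$ over $\K$.

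\emph{The inclusion $\supseteq$.} It suffices to show $\soc^i U\otimes\soc^j V\subset\soc^{i+j-1}(U\otimes V)$ for all $i,j\Ge 1$ and then sum over $i+j=n+1$. Filter $\soc^i U\otimes\soc^j V$ by the submodules $G_k:=\sum_{a+b=k,\ 1\Le a\Le i,\ 1\Le b\Le j}\soc^a U\otimes\soc^b V$ for $2\Le k\Le i+j$ (and $G_1:=0$). Since $\otimes$ is exact over $\K$, the image of each $\soc^a U\otimes\soc^b V$ in $G_k/G_{k-1}$ is a quotient of $\big(\soc^a U/\soc^{a-1}U\big)\otimes\big(\soc^b V/\soc^{b-1}V\big)$, which is semisimple by (a); hence every $G_k/G_{k-1}$ is semisimple and, by (c), $\soc^{i+j-1}\big(\soc^i U\otimes\soc^j V\big)=\soc^i U\otimes\soc^j V$. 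Applying Proposition~\ref{prop: PropsOfSoc}(1) to the inclusion $\soc^i U\otimes\soc^j V\hookrightarrow U\otimes V$ finishes this step.

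\emph{One semisimple factor.} I would next prove that if $V$ is semisimple then $\soc^n(U\otimes V)=\soc^n U\otimes V$ for all $n$ (and symmetrically $U\otimes\soc^n V$ when $U$ is semisimple); this is exactly~\eqref{eq:SocTensor} in that case. Induct on $n$; for the step, using the definition of $\soc^{n+1}$, the inductive hypothesis, and exactness of $\otimes$,
\[ \soc^{n+1}(U\otimes V)\big/\soc^n(U\otimes V)=\soc\big((U\otimes V)\big/(\soc^n U\otimes V)\big)=\soc\big((U/\soc^n U)\otimes V\big), \]
and the case $n=1$ (the hypothesis) applied to $(U/\soc^n U,V)$ together with (b) identifies the right-hand side with $\big(\soc^{n+1}U/\soc^n U\big)\otimes V$.

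\emph{The inclusion $\subseteq$.} Induct on the sum of the Loewy lengths of $U$ and $V$. If one factor is semisimple, the previous step gives equality, so assume both Loewy lengths are $\Ge 2$; put $\bar U:=U/\soc U$ and $N:=(U\otimes V)\big/\soc(U\otimes V)$. By the hypothesis $\soc(U\otimes V)=\soc U\otimes\soc V$, so $N$ has a submodule $\cong\soc U\otimes(V/\soc V)$ with quotient $\cong\bar U\otimes V$, and the composite $U\otimes V\twoheadrightarrow N\twoheadrightarrow\bar U\otimes V$ is the natural surjection $\rho\colon U\otimes V\to\bar U\otimes V$ (induced by $U\to\bar U$), with $\ker\rho=\soc U\otimes V$. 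Fix $n\Ge 1$. By (b), $\soc^{n-1}N$ is the image of $\soc^n(U\otimes V)$ in $N$, so by Proposition~\ref{prop: PropsOfSoc}(1) the image of $\soc^n(U\otimes V)$ under $\rho$ lies in $\soc^{n-1}(\bar U\otimes V)$, which by the induction hypothesis (the Loewy length of $\bar U$ is one less than that of $U$) is contained in $\sum_{i=1}^{n-1}\soc^i\bar U\otimes\soc^{n-i}V$. Using $\soc^i\bar U=\soc^{i+1}U/\soc U$ and taking $\rho$-preimages,
\[ \soc^n(U\otimes V)\ \subset\ Y+\soc U\otimes V,\qquad Y:=\sum_{j=2}^{n}\soc^j U\otimes\soc^{n+1-j}V . \]
Here $Y\subset\soc^n(U\otimes V)$ by the inclusion $\supseteq$ already proved (each summand has $j+(n+1-j)-1=n$), so writing $x\in\soc^n(U\otimes V)$ as $x=y+z$ with $y\in Y$ and $z\in\soc U\otimes V$ yields $z=x-y\in(\soc U\otimes V)\cap\soc^n(U\otimes V)=\soc^n(\soc U\otimes V)=\soc U\otimes\soc^n V$ by the one-semisimple-factor case. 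As $Y$ and $\soc U\otimes\soc^n V$ are among the summands of $\sum_{i=1}^n\soc^i U\otimes\soc^{n+1-i}V$, we get $x$ there, proving $\subseteq$. The one delicate point is this last argument: because $\ker\rho=\soc U\otimes V$ has large Loewy length, pulling $\soc^{n-1}(\bar U\otimes V)$ back through $\rho$ overshoots, and the missing bottom layer $\soc U\otimes\soc^n V$ must be recovered by combining the inclusion $\supseteq$ with the one-semisimple-factor case; keeping the nested inductions (on $n$ in the latter, on total Loewy length here) consistent is the only other thing to watch.
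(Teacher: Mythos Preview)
Your proof is correct. Both your argument and the paper's hinge on the same device: pass to $\bar U=U/\soc U$, push $\soc^{n}(U\otimes V)$ forward into $\bar U\otimes V$, and invoke an inductive description of $\soc^{n-1}(\bar U\otimes V)$. The paper runs a single induction on $n$ (simultaneously over all $U,V$) and, at the very last step, asserts $(\psi\otimes\Id)^{-1}\big(\soc^{p}(\bar U\otimes V)\big)\subset S_{p+1}$; however, that preimage is $\sum_{j=2}^{p+1}U^{j}\otimes V^{p+2-j}+\soc U\otimes V$, so one still has to account for the extra $\soc U\otimes V$ coming from $\ker(\psi\otimes\Id)$. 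Your organization handles this cleanly: you first isolate and prove the \emph{one-semisimple-factor} equality $\soc^{n}(\soc U\otimes V)=\soc U\otimes\soc^{n}V$, and then, inducting on the sum of Loewy lengths, you split $x=y+z$ with $y\in Y\subset\soc^{n}(U\otimes V)$ (by the already-established ``$\supseteq$'') and identify $z\in(\soc U\otimes V)\cap\soc^{n}(U\otimes V)=\soc U\otimes\soc^{n}V$. This is exactly the missing computation the paper's last line needs, so your version is a genuine sharpening of the same strategy rather than a different route. The cost is an extra auxiliary lemma and a second induction; the gain is that every inclusion is fully justified.
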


\begin{proof}
For a $G$-module $V$, denote $\soc^nV$ by $V^n$, $n\in\N$. Suppose by induction that~\eqref{eq:SocTensor} holds for all $n\Le p$ and $U, V\in\Rep G$. Set
$$
S_p=S_p(U,V):=\sum_{i=1}^pU^i\otimes V^{p+1-i}.
$$
For all $1\Le i\Le p$, we have
$$
F_i:=\big(U^i\otimes V^{p+2-i}\big)\big/\big(S_p\cap \big(U^i\otimes V^{p+2-i}\big)\big)=\big(U^i\otimes V^{p+2-i}\big)\big/\big(U^{i-1}\otimes V^{p+2-i} + U^i\otimes V^{p+1-i}\big).
$$
Hence,
$$
F_i\simeq \big(U^i\big/U^{i-1}\big)\otimes \big(V^{p+2-i}\big/V^{p+1-i}\big).
$$
By the hypothesis, $F_i$ is semisimple. Hence, so is
$$
S_{p+1}/S_p=\sum_{i=1}^pF_i\subset (U\otimes V)/S_p.
$$
By the inductive hypothesis, we conclude
$$
\soc^{p+1}(U\otimes V)\supset S_{p+1}.
$$
Now, we prove the other inclusion. Let $$\psi:U\to \bar U:=U/U^1$$ be the quotient map. Note the commutative diagram
$$
\begin{CD}
U\otimes V @>\pi >> X:=(U\otimes V)\big/S_p\\
@VV\psi\otimes\Id V @VVV\\
\bar U\otimes V @>\bar\pi >> \bar X:=\big(\bar U\otimes V\big)\big/S_{p-1}\big(\bar U, V\big),
\end{CD}
$$
where $\pi$ and $\bar\pi$ are the quotient maps. By the inductive hypothesis, we have
$$
\soc^{p+1}(U\otimes V)=\pi^{-1}\big(X^1\big)\subset (\bar\psi\otimes\Id)^{-1}\big(\bar\pi^{-1}\big)\big(\soc \bar X\big)=(\bar\psi\otimes\Id)^{-1}\big(\soc^p\big(\bar U\otimes V\big)\big)\subset S_{p+1},
$$
since $\psi^{-1}\big(\soc^i\bar U\big)=\soc^{i+1}U$.
\end{proof}

It is convenient sometimes to consider the Zariski closure $H$ of $G\subset\GL(W)$ as an LDAG. To  distinguish the structures, let us denote the latter by $H^{\diff}$. Then $\Rep H^{\diff}$ is identified with a subcategory of $\Rep G$. 
\begin{lemma}\label{lem:HTensor}
If  $H$ is reductive, then~\eqref{eq:SocTensor} holds for all $U,V\in\Rep H^{\diff}$ and $n\in\N$. 
\end{lemma}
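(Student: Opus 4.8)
The plan is to obtain \eqref{eq:SocTensor} for all $n$ from its $n=1$ case by invoking Proposition~\ref{prop:SocTensor} with $G$ replaced by $H^{\diff}$. First I would check this is legitimate: the Zariski closure of $H^{\diff}$ in $\GL(W)$ is $H$, so by Remark~\ref{rem:GbarG} reductivity of $H$ makes $H^{\diff}$ a reductive LDAG, and $W$—semisimple as an $H$-module by Proposition~\ref{prop:RedClosure}—is a faithful semisimple $H^{\diff}$-module. Hence the $W$-filtration of $A:=\K\{H^{\diff}\}$, Lemma~\ref{lem:socV}, and Corollary~\ref{cor:FiltRed} are all available for $H^{\diff}$: a module in $\Rep H^{\diff}$ is semisimple exactly when it equals its zeroth filtration piece, and $\soc A=A_0$. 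So everything reduces to proving $\soc(U\otimes V)=(\soc U)\otimes(\soc V)$ for all $U,V\in\Rep H^{\diff}$.

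The inclusion $(\soc U)\otimes(\soc V)\subseteq\soc(U\otimes V)$ is immediate: by Lemma~\ref{lem:socV}, $\soc U=U_0$ and $\soc V=V_0$, and since $A_0A_0\subseteq A_0$ by~\eqref{eq:ProductAn}, the comodule map of $U\otimes V$ carries $U_0\otimes V_0$ into $(U_0\otimes V_0)\otimes A_0$; thus $U_0\otimes V_0\subseteq(U\otimes V)_0$, which is semisimple again by Lemma~\ref{lem:socV}.

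The reverse inclusion is the crux, and I expect it to be the main obstacle. My plan is to reduce to subcomodules of $A$: by Proposition~\ref{prop:BasicPropsA} one embeds $U$ and $V$ into finite direct sums of copies of $A$, and since $\soc$ commutes with finite direct sums (cf.~\eqref{eq:2}) and satisfies $\soc M=M\cap\soc N$ for $M\subseteq N$ (via Proposition~\ref{prop: PropsOfSoc}(1)), it suffices to show that every simple subcomodule of $A\otimes A$ lies in $A_0\otimes A_0$. For this I would combine $\soc A=A_0$ (Corollary~\ref{cor:FiltRed}) with the grading analysis of Section~\ref{sec:maintech}: after reducing to $H$ connected, $\gr A$ is the coordinate Hopf algebra of a connected LDAG over the constants and in particular a domain (Proposition~\ref{prop:IntDomain}). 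The idea is that this rigidity prevents the top-filtration-degree parts of the two tensor factors of a simple subcomodule of $A\otimes A$ from being pushed into positive degree—the delicate point being to rule out that derivative terms coming from the two factors cancel against each other in the comodule map. Once every simple subcomodule of $A\otimes A$ is confined to $A_0\otimes A_0$, the two inclusions yield the $n=1$ identity and Proposition~\ref{prop:SocTensor} finishes the proof.
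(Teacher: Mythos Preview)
Your outline tracks the paper's proof closely: reduce to $n=1$ via Proposition~\ref{prop:SocTensor}, get the easy inclusion from $A_0A_0\subset A_0$ and Lemma~\ref{lem:socV}, then attack the reverse inclusion by embedding into copies of the coordinate ring. The gap is exactly at the step you yourself flag as ``delicate,'' and the tool you reach for is not strong enough to close it.

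For the reverse inclusion the paper makes two reductions, one of which you omit: it first passes to an algebraic closure of $\K$ (using that $\soc$ commutes with scalar extension in characteristic~$0$), and then to $H$ connected via Lemma~\ref{lem:socV} and Proposition~\ref{prop:QuasiIsom}. The point of the first reduction is that a connected reductive group over an algebraically closed field is defined over~$\Q$; combined with the fact that $H^{\diff}$ is cut out by equations of order~$0$, this forces the $W$-filtration of $B:=\K\{H^{\diff}\}$ to be the filtration \emph{associated with an actual grading} $B=\bigoplus_n\bar B_n$ (see the proof of Proposition~\ref{prop:IntDomain}). This is strictly stronger than what you invoke---that $\gr A$ is a domain---which Proposition~\ref{prop:IntDomain} gives for any connected LDAG. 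With a genuine grading one has $B=B_0\oplus I$ where $I=\bigoplus_{n\ge 1}\bar B_n$ is simultaneously an ideal and a coideal; the integral-domain property then upgrades to the implication $xy\in B_0\Rightarrow x,y\in B_0$, and this is what yields $(U\otimes V)_0\subset U_0\otimes V_0$ directly.

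Knowing only that $\gr A$ is a domain does not rule out the cancellation you worry about: it controls products one filtration degree at a time, but does not by itself prevent a simple subcomodule of $A\otimes A$ from sitting across positive-degree pieces of the two tensor factors in a way that the diagonal coaction sees as degree~$0$. The missing idea is precisely that $\K\{H^{\diff}\}$ is graded, not merely filtered---a feature special to $H^{\diff}$ among LDAGs, and unavailable without the passage to algebraically closed~$\K$.
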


\begin{proof}
By Proposition~\ref{prop:SocTensor}, we only need to prove the formula for $n=1$. Since $A_0^2=A_0$, we have, by Lemma~\ref{lem:socV},
$$
(\soc U)\otimes(\soc V)=U_0\otimes V_0\subset {(U\otimes V)}_0=\soc(U\otimes V).
$$
Let us prove the other inclusion. Since $\Char \K =0$, $$\soc (U\otimes_\K L)=(\soc U)\otimes_\K L$$ for all differential field extensions $L\supset \K $ by~\cite[Section~7]{Bourbaki8}. Therefore, without loss of generality, we will assume that $\K $ is algebraically closed. Moreover, by Lemma~\ref{lem:socV} and Proposition~\ref{prop:QuasiIsom}, an $H^{\diff}$-module is semisimple if and only if it is semisimple as an $\big(H^{\diff}\big)^\circ$-module. Therefore, it suffices to consider only the case of connected $H$. Since a connected reductive group over an algebraically closed field is defined over $\Q$ and the defining equations of $H^{\diff}$ are of order $0$, the $W$-filtration of $B:=\K \big\{H^{\diff}\big\}$ is associated with a grading (see proof of Proposition~\ref{prop:IntDomain}). In particular, the sum $I$ of all grading components but $B_0=\K [H]$ is an ideal of $B$. We have
$$
B=B_0\oplus I.
$$
Since $B$ is an integral domain, it follows that, if $x,y\in B$ and $xy\in B_0$, then $x,y\in B_0$. Hence,
$$
(U\otimes V)_0\subset U_0\otimes V_0,
$$
which completes the proof.
\end{proof}

\begin{proposition}\label{prop:EasyInlusion}
For all $V\in\TRep G$,
$$
V_n\subset\soc^{n+1}V.\qedhere
$$
\end{proposition}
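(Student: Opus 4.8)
The plan is to prove the inclusion by induction on $n$, using the inductive hypothesis to annihilate all but the top-degree term in the estimate of Proposition~\ref{prop:VarrhoV_gen} for $\varrho_V(V_n)$, thereby reducing each step of the induction to the already understood case $n=0$.

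For the base case $n=0$, note that $V_0$ is a subcomodule of $V$ satisfying $\varrho_V(V_0)\subset V_0\otimes_\K A_0$, so $(V_0)_0=V_0$ by~\eqref{eq:Restriction}; since $W$ is semisimple, the converse direction of Lemma~\ref{lem:socV} (applied to finite-dimensional subcomodules of $V_0$ and passed to the direct limit) forces $V_0$ to be semisimple, whence $V_0\subset\soc V=\soc^1 V$.

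Assume now that $U_{n-1}\subset\soc^n U$ for every $U\in\TRep G$, and fix $V\in\TRep G$. Put $\bar V:=V/\soc^n V$ with quotient map $\pi\colon V\to\bar V$; since $\pi$ is a morphism of $G$-modules we have $\varrho_{\bar V}\circ\pi=(\pi\otimes\Id)\circ\varrho_V$. By Proposition~\ref{prop:VarrhoV_gen}, $\varrho_V(V_n)\subset V_n\otimes_\K A_0+\sum_{i=0}^{n-1}V_i\otimes_\K A_{n-i}$, and since $A_{n-i}\subset A_n$ and $V_i\subset V_{n-1}$ for $i\Le n-1$ (monotonicity of the $W$-filtration), the right-hand side lies in $V_n\otimes_\K A_0+V_{n-1}\otimes_\K A_n$. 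Applying $\pi\otimes\Id$ and using $V_{n-1}\subset\soc^n V=\ker\pi$ from the inductive hypothesis, we obtain $\varrho_{\bar V}(\pi(V_n))\subset\pi(V_n)\otimes_\K A_0$. As $\pi(V_n)$ is a submodule of $\bar V$, this says precisely $\pi(V_n)\subset\bar V_0$. Applying the base case to $\bar V$ gives $\bar V_0\subset\soc\bar V=\soc^{n+1}V\big/\soc^n V$, and pulling back yields $V_n\subset\soc^{n+1}V$, completing the induction.

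I do not anticipate a real obstacle here; the only places that require a moment's attention are the absorption of the low-index terms of Proposition~\ref{prop:VarrhoV_gen} into $V_{n-1}\otimes_\K A_n$ (which is exactly where monotonicity of both the $W$-filtration and the coalgebra filtration $\{A_n\}$ is used) and the routine bookkeeping that, $\pi$ being a comodule morphism, gives the intertwining identity $\varrho_{\bar V}\circ\pi=(\pi\otimes\Id)\circ\varrho_V$ needed to transport the estimate to $\bar V$.
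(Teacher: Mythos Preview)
Your proof is correct and follows essentially the same strategy as the paper's: both argue by induction on $n$, invoke Proposition~\ref{prop:VarrhoV_gen} to kill the lower filtration pieces, and finish by applying (the converse direction of) Lemma~\ref{lem:socV} to conclude that the resulting degree-$0$ quotient is semisimple. The only cosmetic difference is that the paper phrases the inductive step in terms of the quotient $U:=V_n/V_{n-1}$ (showing $U=U_0$ directly and noting that $(V_n+\soc^nV)/\soc^nV$ is a further quotient of $U$), whereas you work in $\bar V:=V/\soc^nV$ and push the estimate through the quotient map $\pi$; the two formulations are interchangeable.
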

\begin{proof}
We will use induction on $n\in\N$, with the case $n=0$ being done by Lemma~\ref{lem:socV}. Suppose $n\Ge 1$ and $$V_{n-1}\subset\soc^nV.$$ We need to show that the $G$-module
$$
W:=\big(V_n+\soc^nV\big)\big/\soc^nV\simeq V_n\big/\big(V_n\cap\soc^nV\big)
$$
is semisimple. But the latter is isomorphic to a quotient of $U:=V_n/V_{n-1}$, since $$V_{n-1}\subset V_n\cap\soc^nV.$$ By Proposition~\ref{prop:VarrhoV_gen}, $U=U_0$. Finally, Lemma~\ref{lem:socV} implies that $U$, hence, $W$, is semisimple.
\end{proof}

\subsection{Main result for semisimple LDAGs}
\begin{theorem}\label{thm:EqualFiltrations}
If $G^\circ$ is semisimple, then, for all $V\in\TRep G$ and $n\in\N$,
$$
V_n=\soc^{n+1}V.
$$
\end{theorem}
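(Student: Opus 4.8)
The inclusion $V_n\subset\soc^{n+1}V$ is Proposition~\ref{prop:EasyInlusion}, so the whole point is the reverse inclusion. My first move is to reduce to $V=A:=\K\{G\}$. Both of the operations $V\mapsto V_n$ and $V\mapsto\soc^{n+1}V$ are compatible with passage to submodules, finite direct sums and direct limits: for the $W$-filtration this is~\eqref{eq:Restriction},~\eqref{eq:nSum},~\eqref{eq:DirLim}, and for the socle filtration it follows from Proposition~\ref{prop: PropsOfSoc}(1)--(2) together with the (routine) facts that $\soc^{n+1}$ of a submodule $U\subset V$ equals $U\cap\soc^{n+1}V$ and that $\soc^{n+1}$ commutes with direct limits. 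Hence, using the embedding $V\hookrightarrow A^{\dim V}$ of Proposition~\ref{prop:BasicPropsA}, it is enough to prove $\soc^{n+1}A\subset A_n$.

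Next I would reduce to $G$ connected. Since $\Char\K=0$ and $G/G^\circ$ is finite, Clifford's theorem shows that a simple $G$-module is semisimple over $G^\circ$, while averaging a $G^\circ$-equivariant projection over $G/G^\circ$ shows conversely that a $G^\circ$-semisimple $G$-module is $G$-semisimple; consequently $\soc^k_GV=\soc^k_{G^\circ}V$ for all $V$ and $k$. The module $W$ stays faithful and semisimple over $G^\circ$, so the $W$-filtration for $G^\circ$ is defined. Now apply Proposition~\ref{prop:QuasiIsom} to the inclusion $\phi\colon G^\circ\hookrightarrow G$ (trivial kernel, finite index): for $U:=\soc^{n+1}_GV=\soc^{n+1}_{G^\circ}V$, the connected case applied to the $G^\circ$-module ${}_{G^\circ}U$ (and $\soc^{n+1}\soc^{n+1}=\soc^{n+1}$) gives $({}_{G^\circ}U)_n=U$, whence $U=U_n$ as a $G$-module by~\eqref{eq:phi*^-1} (with $s=t=n$), and then $U=U\cap V_n\subset V_n$ by~\eqref{eq:Restriction}. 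So from now on $G$ is connected and semisimple; then $H=\overline{G}$ is a connected reductive LAG by Proposition~\ref{prop:RedClosure} and Corollary~\ref{cor:ConnectedGH}, and $A_0=\K[H]$.

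Now I would induct on $n$, the case $n=0$ being Corollary~\ref{cor:FiltRed}. Assume $\soc^kM=M_{k-1}$ for all $M\in\TRep G$ and $k\le n$; in particular $\soc^nA=A_{n-1}$, so $\soc^{n+1}A/\soc^nA=\soc(A/A_{n-1})$ and the step reduces to proving
$$\soc(A/A_{n-1})=A_n/A_{n-1},$$
of which the inclusion $A_n/A_{n-1}\subseteq(A/A_{n-1})_0=\soc(A/A_{n-1})$ is immediate from~\eqref{eq:DeltaAn} and Lemma~\ref{lem:socV}. For the reverse inclusion I would pass to the associated graded $\gr A=\bigoplus_m A_m/A_{m-1}$, which by Proposition~\ref{prop:IntDomain} is the coordinate ring of a connected LDAG dense in the reductive group $H$; the extra rigidity of this grading, combined with the socle--tensor identity of Lemma~\ref{lem:HTensor} for the reductive group $H$, Proposition~\ref{prop:SocTensor}, and the prolongation estimate~\eqref{eq:3} applied to the exact sequences~\eqref{eq:es} (which together show that the Loewy length of a product of prolongation-type generators is governed by, and for semisimple $G$ pinned to, the total order of the derivatives involved), forces every Loewy-length-$1$ element of $A/A_{n-1}$ into $A_n/A_{n-1}$. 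Comparing the $G$-socle filtration of $A$ with the degree filtration of $\gr A$, along the lines of the proof of Proposition~\ref{prop:IntDomain}, then yields $\soc(A/A_{n-1})\subseteq A_n/A_{n-1}$ and closes the induction.

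The part I expect to be the main obstacle is precisely this last one: ruling out that the semisimple socle of the quotient $A/A_{n-1}$ leaks upward into $A_{n+1}/A_{n-1}$. The substance is a \emph{lower} bound on Loewy length --- that a genuinely degree-$(n+1)$ element of $\gr A$ has socle length at least $n+2$ --- and this has no soft argument: it is where one must use both the reductivity of $H=\overline{G}$ (through Lemma~\ref{lem:HTensor}) and the genuine non-splitting of the prolongation sequences~\eqref{eq:es}, and it is the one place where the hypothesis that $G$ is semisimple rather than merely reductive is essential --- the torus directions are exactly what make the analogous equality fail in the general reductive case (cf.\ Example~\ref{ex:sharp} and Theorem~\ref{thm:bound}).
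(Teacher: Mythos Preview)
Your reductions to $V=A$ and to connected $G$ are fine, but the final step is not a proof: you assert that the listed ingredients (Lemma~\ref{lem:HTensor}, Proposition~\ref{prop:SocTensor}, the prolongation estimate~\eqref{eq:3}, and the graded structure of Proposition~\ref{prop:IntDomain}) ``force'' $\soc(A/A_{n-1})\subset A_n/A_{n-1}$, but you do not show how. The difficulty is real. The grading in Proposition~\ref{prop:IntDomain} lives on $\gr A$ viewed as the coordinate ring of an LDAG $\widetilde G$ over the \emph{constant} field $\widetilde\K$, not as a $G$-module over $\K$; comparing the $G$-socle filtration of $A$ with the degree filtration of $\gr A$ is exactly the content of the theorem, not a tool available to prove it. Likewise Lemma~\ref{lem:HTensor} applies to $H^{\diff}$-modules, and $A$ is not one; and~\eqref{eq:3} gives only an upper bound on the socle filtration of a prolongation, whereas what you need here is a lower bound on Loewy length --- that nothing of genuine degree $n+1$ in $A$ can become semisimple modulo $A_{n-1}$. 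You correctly identify this as the crux, but you have not supplied the argument.

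The paper sidesteps this entirely by a Tannakian argument: let $X$ be the class of $V$ with $\soc^{n+1}V\subset V_n$ for all $n$, show $X$ contains all semisimple modules, and then show $X$ is closed under direct sums, tensor products (via Lemma~\ref{lem:HTensor} and Proposition~\ref{prop:SocTensor}, applied only to $H^{\diff}$-modules), submodules, prolongations, quotients, and duals. The last two are the substantive step and the only place semisimplicity of $G^\circ$ is used: since $G^\circ=[G^\circ,G^\circ]\subset\SL(V)$, a determinant-type invariant in $V^{\otimes r}$ (with $r=|G/G^\circ|\dim V$) yields a nonzero element of $\Hom_E(V^\vee,V^{\otimes r-1})$, giving an embedding $V^\vee\hookrightarrow V^{\otimes r-1}$ and hence $V^\vee\in X$; quotients follow by dualizing. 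Since $\Rep G$ is differentially generated by a single faithful semisimple module, $X=\Ob(\Rep G)$. This dual-embedding trick is the missing idea in your approach; without it, the lower bound on Loewy length that you need has no visible source.
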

\begin{proof}
By Proposition~\ref{prop:EasyInlusion}, it suffices to prove that, for all $V\in\Rep G$ and $n\in\N$,
\begin{equation}\label{eq:HardInclusion}
\soc^{n+1}V\subset V_n.
\end{equation}
Let $X\subset\Ob(\Rep G)$ denote the family of all $V$ satisfying~\eqref{eq:HardInclusion} for all $n\in\N$. We have, by Lemma~\ref{lem:socV}, $V\in X$ for all semisimple $V$. Suppose that $V,W\in\Rep H^{\diff}\subset\Rep G$ belong to $X$. Then  $V\oplus W$ and $V\otimes W$ belong to $X$. Indeed, by Propositions~\ref{prop:VarrhoV_gen} and~\ref{prop: PropsOfSoc} and Lemma~\ref{lem:HTensor},
$$
\soc^{n+1}(V\oplus W)=\soc^{n+1}V\oplus\soc^{n+1}W\subset V_n\oplus W_n=(V\oplus W)_n
$$
and
$$
\soc^{n+1}(V\otimes W)=\sum_{i=0}^{n}\big(\soc^{i+1}V\big)\otimes\big(\soc^{n+1-i}W\big)\subset\sum_{i=0}^{n}V_i\otimes W_{n-i}\subset {(V\otimes W)}_n.
$$
Similarly, Proposition~\ref{prop: PropsOfSoc} and~\eqref{eq:Restriction} imply  that, if $V\in X$, then all possible submodules and differential prolongations  of $V$ belong to $X$. Since $\Rep G$ is differentially generated by a semisimple $V\in\Rep H$, it remains only to check the following.

 If $V\in\Rep G$ satisfies~\eqref{eq:HardInclusion}, then so do the dual $V^\vee$ and a quotient $V/U$, where $U\in\Rep G$. Since $G^\circ$ is semisimple, \cite[Theorem~18]{CassidyClassification} implies that $G^\circ(\U)$, $\U$ a differentially closed field containing $\K$, is differentially isomorphic to a group of the form $G_1\cdot G_2\cdot \ldots \cdot G_t$ where, for each $i$, there is an  algebraically closed field $\U_i$ such that $G_i$ is differentially  isomorphic to the $\U_i$ points of a simple algebraic group $H_i$.  Since $H_i = [H_i,H_i]$, we have $G^\circ = [G^\circ,G^\circ]$ and so we must have $G^\circ\subset\SL(V)$. The group $\SL(V)$ acts on $V^{\otimes\dim V}$ and has a nontrivial invariant element  corresponding to the determinant. We conclude that, for $$r:=|G/G^\circ|\dim V,$$ the $\SL(V)$-module $V^{\otimes r}$ has a nontrivial $G$-invariant element. Let $E\subset\GL(V)$ be the group generated by $\SL(V)$ and $G$. Then the space
\begin{equation}\label{eq:home}
\Hom_E\big(V^\vee,V^{\otimes r-1}\big)\simeq \big(V^{\otimes r}\big)^E
\end{equation}
is nontrivial. Since $V^\vee$ is a simple $E$-module, this means that there exists an embedding $$V^\vee\to V^{\otimes r-1}$$ of $E$-modules, and hence of $G$-modules. Then $V^\vee\in X$. Finally, since $(V/U)^\vee$ embeds into $V^\vee$, it belongs to $X$. Then its dual $V/U\in X$. Hence, $X=\Ob(\Rep G)$.
\end{proof}

\subsection{Reductive case}

\begin{proposition}\label{prop:DirectProduct}
Let $S$ and $T$ be reductive LDAGs and $G:=S\times T$. For $V\in\Rep G$, if $_SV={(_SV)}_{s_1,t_1}$ and $_TV={(_TV)}_{s_2,t_2}$, then $V=V_{s_1+s_2,\max\{t_1,t_2\}}$ (see Definition~\ref{def:GV}). 
\end{proposition}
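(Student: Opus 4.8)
The plan is to reduce the claim, via Proposition~\ref{prop:QuasiIsom}, to a single filtration inclusion inside $A:=\K\{G\}$, and then to recover the $G$-comodule structure of $V$ from its restrictions to $S$ and $T$.

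First I would set things up and dispose of the filtration inclusion. Since $S$ and $T$ are reductive, so is $G=S\times T$ (the unipotent radical $\Ru(G)$ has unipotent normal image in each factor under the projection, hence is trivial), so the $W$-filtration $\{A_{s,t}\}$ of $A$ is defined and, by Corollary~\ref{cor:FiltRed}, does not depend on the chosen faithful semisimple module. Write $B:=\K\{S\}$, $C:=\K\{T\}$, so that $A=B\otimes_\K C$ as differential Hopf algebras and the projections $\pi_S\colon G\to S$, $\pi_T\colon G\to T$ induce $\pi_S^*(b)=b\otimes 1$, $\pi_T^*(c)=1\otimes c$; let $\{B_{s,t}\}$, $\{C_{s,t}\}$ be the canonical filtrations of $B$, $C$, and recall that $_SV$ is $V$ regarded as an $S$-module via $\iota_S\colon S\hookrightarrow G$, $s\mapsto(s,1)$ (Definition~\ref{def:GV}), and similarly for $_TV$. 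The first step is the inclusion
$$
B_{s_1,t_1}\otimes_\K C_{s_2,t_2}\ \subset\ A_{s_1+s_2,\,\max\{t_1,t_2\}} .
$$
Applying the first assertion of Proposition~\ref{prop:QuasiIsom} to the homomorphisms $\pi_S,\pi_T$ of reductive LDAGs gives $B_{s_1,t_1}\otimes 1=\pi_S^*\bigl(B_{s_1,t_1}\bigr)\subset A_{s_1,t_1}$ and $1\otimes C_{s_2,t_2}\subset A_{s_2,t_2}$; multiplying these and using~\eqref{eq:ProdAst} yields the displayed inclusion.

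Next I would pass to comodule maps. For $U\in\Rep G$ one has $U=U_{s,t}$ if and only if $\varrho_U(U)\subset U\otimes_\K A_{s,t}$ (immediate from the maximality in the definition of $U_{s,t}$). Let $\varrho_S\colon V\to V\otimes_\K B$ and $\varrho_T\colon V\to V\otimes_\K C$ be the comodule maps of $_SV$ and $_TV$; the hypotheses then read $\varrho_S(V)\subset V\otimes_\K B_{s_1,t_1}$ and $\varrho_T(V)\subset V\otimes_\K C_{s_2,t_2}$, and we must show $\varrho_V(V)\subset V\otimes_\K A_{s_1+s_2,\max\{t_1,t_2\}}$. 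Since $S\times\{1\}$ and $\{1\}\times T$ act on $V$ by commuting operators and $(s,t)=(1,t)(s,1)$, the $G$-comodule structure factors as
$$
\varrho_V=\sigma\circ\bigl(\varrho_T\otimes\Id_B\bigr)\circ\varrho_S ,
$$
where $\sigma\colon V\otimes_\K C\otimes_\K B\to V\otimes_\K B\otimes_\K C=V\otimes_\K A$ interchanges the last two tensor factors; this is verified by evaluating both sides at a point $(s,t)\in G(\U)$, the right-hand side producing $r_V(1,t)\,r_V(s,1)(v)=r_V(s,t)(v)$. Feeding in the two hypotheses,
$$
\varrho_V(V)\subset\sigma\bigl(\varrho_T(V)\otimes B_{s_1,t_1}\bigr)\subset\sigma\bigl(V\otimes_\K C_{s_2,t_2}\otimes_\K B_{s_1,t_1}\bigr)=V\otimes_\K\bigl(B_{s_1,t_1}\otimes_\K C_{s_2,t_2}\bigr),
$$
which by the first step lies in $V\otimes_\K A_{s_1+s_2,\max\{t_1,t_2\}}$; hence $V=V_{s_1+s_2,\max\{t_1,t_2\}}$.

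The only point that needs genuine care is the factorization $\varrho_V=\sigma\circ(\varrho_T\otimes\Id_B)\circ\varrho_S$, that is, that intertwining the two restricted comodule maps and then reordering the tensor factors returns precisely $\varrho_V$ — equivalently, that the $(S\times T)$-module $V$ is governed by the external data of its $S$- and $T$-structures. Once this bookkeeping is settled, everything else is a formal combination of Proposition~\ref{prop:QuasiIsom} and~\eqref{eq:ProdAst}; in particular, routing the argument through the comodule maps of $V$ — rather than through an embedding $V\hookrightarrow A^{\dim V}$ and the $S$-module filtration of $A$, which would force one to describe $(_SA)_{s,t}$ — keeps the analysis short and avoids any further inspection of the filtration $\{A_{s,t}\}$.
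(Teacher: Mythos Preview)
Your argument is correct, and it takes a genuinely different route from the paper's.  The paper does exactly what you warn against in your last paragraph: it embeds $V\hookrightarrow A^{\dim V}$ via Proposition~\ref{prop:BasicPropsA}, chooses vector-space complements $\bar B_j$ of $B_{j-1}$ in $B_j$ (and likewise $\bar C_r$), writes $A=\bigoplus_{j,r}\bar B_j\otimes\bar C_r$, and then reads the hypotheses as the vanishing of the projections of $V$ onto the summands with $j>s_1$ or $r>s_2$; the two filtration indices are handled in two separate passes, first showing $V=V_{s_1+s_2}$ and then $V=V_{(\max\{t_1,t_2\})}$ via $(B\otimes C)_{(n)}=B_{(n)}\otimes C_{(n)}$.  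Your approach stays with the comodule map of $V$ itself and the factorization $\varrho_V=\sigma\circ(\varrho_T\otimes\Id_B)\circ\varrho_S$, invoking Proposition~\ref{prop:QuasiIsom} and~\eqref{eq:ProdAst} to get the single inclusion $B_{s_1,t_1}\otimes C_{s_2,t_2}\subset A_{s_1+s_2,\max\{t_1,t_2\}}$ that handles both indices at once.  What you gain is that no auxiliary complements $\bar B_j,\bar C_r$ need be chosen and no description of $({}_SA)_{s,t}$ is required; what the paper's approach gains is that it never needs the comodule factorization (which you rightly flag as the one point requiring care --- it can also be checked purely algebraically from coassociativity of $\varrho_V$ and $\Delta_A=\Delta_B\otimes\Delta_C$, without passing to $\U$-points).
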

\begin{proof}
We need to show that $V=V_{s_1+s_2}$ and $V=V_{(\max\{t_1,t_2\})}$.
By Proposition~\ref{prop:BasicPropsA}, $V$ embeds into the $G$-module $$U:=\bigoplus_{i=1}^{\dim V}A(i),$$ where $A(i):=A=B\otimes_\K C$, where $B:=\K \{S\}$ and $C:=\K \{T\}$. We will identify $V$ with its image in $U$. Let $\bar{B}_{j}$, $j\in\N$, be subspaces of $B$ such that
$$
B_{j}=B_{j-1}\oplus\bar{B}_{j}.
$$
Similarly, we define subspaces $\bar{C}_{r}\subset C$, $r\in\N$. We have
$$
A=\bigoplus_{j,r} \bar{B}_{j}\otimes_{\K}\bar{C}_{r},
$$
as vector spaces. 
Let $$\pi^i_{jr}:U\to A(i)=A\to \bar{B}_{j}\otimes_{\K}\bar{C}_{r}$$ denote the composition of the projections. Then, the conditions $_SV=(_SV)_{s_1}$ and $_SV=(_SV)_{s_2}$ mean that $$\pi^i_{jr}(V)=\{0\}$$ if $j>s_1$ or $r>s_2$. In particular, $V$ belongs to $$\bigoplus_{i=1}^{\dim V}A(i)_{s_1+s_2}.$$ Hence, $V=V_{s_1+s_2}$. Similarly, using
$$
(B\otimes C)_{(n)}=B_{(n)}\otimes C_{(n)},
$$
one shows $V=V_{(\max\{t_1,t_2\})}$.
\end{proof}

\begin{proposition}\cite[Proof of Lemma~4.5]{diffreductive}\label{prop:RedDecomp}
Let $G$ be a reductive LDAG, $S$ be the differential commutator subgroup of $G^\circ$ (i.e., the Kolchin-closure of the commutator subgroup of $G^\circ$), and $T$ be the identity component of the center of $G^\circ$. 
The LDAG $S$ is semisimple and the multiplication map
$$
\mu: S\times T\to G^\circ,\ (s,t)\mapsto st,
$$
is an epimorphism of LDAGs with a finite kernel.
\end{proposition}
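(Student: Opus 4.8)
The plan is to transport the classical structure theorem for reductive linear algebraic groups to the LDAG level through the Zariski closure, using a central isogeny and Goursat's lemma to handle the intersection behavior that is the one genuinely new feature here. First I would pass to $H:=\overline{G^\circ}$: since $G$, hence $G^\circ$, is reductive, $H$ is a reductive LAG by \cite[Theorem~4.7]{diffreductive}, and it is connected by Corollary~\ref{cor:ConnectedGH}. By the structure theory of reductive LAGs, $H=H_1\cdot Z$ where $H_1:=(H,H)$ is semisimple, $Z:=Z(H)^\circ$ is a torus, and $H_1\cap Z$ is finite; let $m\colon H_1\times Z\to H$, $(a,z)\mapsto az$, be the associated central isogeny, with finite kernel $\Gamma$.

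Next I would check that $S$ is semisimple. The group $S$ is connected (it is the Kolchin closure of the subgroup generated by the irreducible image of the commutator map $G^\circ\times G^\circ\to G^\circ$) and normal in $G^\circ$; since $\Ru(S)$ is characteristic in $S$, it is a normal unipotent subgroup of the reductive group $G^\circ$ and hence trivial, so $S$ is reductive. A density argument gives $\overline{S}=H_1$: on the one hand $[G^\circ,G^\circ]\subset[H,H]=H_1$, and on the other $\overline{[G^\circ,G^\circ]}$ contains the image of the commutator map of $H$ (because $G^\circ\times G^\circ$ is dense in $H\times H$), and that image generates $H_1$. Hence $Z(S)\subset Z(H_1)\cap S$ is finite, and a connected reductive LDAG with finite center is semisimple.

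For the decomposition I would set $\widehat G:=m^{-1}(G^\circ)\subset H_1\times Z$ and let $\widehat G^\circ$ be its identity component. Since $m$ exhibits $H_1\times Z$ as a $\Gamma$-torsor over $H$, $\widehat G$ is Zariski dense in $H_1\times Z$, hence so is $\widehat G^\circ$ (finite index), and $m(\widehat G^\circ)=G^\circ$ with finite kernel. The first projection $S':=p_1(\widehat G^\circ)$ is a connected LDAG Zariski dense in the semisimple $H_1$, so by Cassidy's classification \cite[Theorem~18]{CassidyClassification} it is a product of groups $H_i(\K^{\calE_i})$ with $H_i$ simple, each of which is perfect; hence $S'=[S',S']$. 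Goursat's lemma applied to $\widehat G^\circ\subset S'\times p_2(\widehat G^\circ)$, together with the abelianity of $p_2(\widehat G^\circ)\subset Z$ and the perfectness of $S'$, then forces $\widehat G^\circ=S'\times T'$ with $T':=p_2(\widehat G^\circ)$ Zariski dense in $Z$. Applying $m$, which is injective on $S'\times\{1\}$ and on $\{1\}\times T'$, one gets $[G^\circ,G^\circ]=m([\widehat G^\circ,\widehat G^\circ])=m(S'\times\{1\})$, so $[G^\circ,G^\circ]$ is already Kolchin closed and equals $S$; and $m(\{1\}\times T')$ is a connected central subgroup of $G^\circ$ with $S\cdot m(\{1\}\times T')=G^\circ$ and $S\cap m(\{1\}\times T')$ finite, which a short argument (using connectedness of $T$) identifies with $Z(G^\circ)^\circ=T$. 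Under the identification $S\times T\cong S'\times T'=\widehat G^\circ$, the map $\mu$ becomes the restriction of $m$, hence is an epimorphism onto $G^\circ$ with finite kernel $\Gamma\cap(S'\times T')$.

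I expect the direct-product step to be the main obstacle: this is precisely where one leaves linear-algebraic-group theory and must use Cassidy's classification (to know that a connected LDAG Zariski dense in a semisimple group is perfect) together with Goursat's lemma. Transferring reductivity to $\overline{G^\circ}$, computing $\overline{S}$, and the bookkeeping with $\Gamma$ are routine once this is in hand.
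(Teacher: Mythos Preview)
The paper does not prove this proposition at all: it is stated with a citation to \cite[Proof of Lemma~4.5]{diffreductive} and no argument is given here. So there is no in-paper proof to compare your proposal against.

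On its own merits, your strategy is sound and is essentially how one would reconstruct the argument: pass to the reductive LAG $H=\overline{G^\circ}$, use the classical decomposition $H=(H,H)\cdot Z(H)^\circ$ with finite intersection, pull $G^\circ$ back through the central isogeny $H_1\times Z\to H$, and split the pullback using that a connected LDAG Zariski-dense in a semisimple group is perfect (Cassidy's classification) while the torus factor is abelian. Two places deserve a sentence more than you give them. First, the claim that $S'=p_1(\widehat G^\circ)$ is perfect as an \emph{abstract} group: this is fine because Cassidy's $H_i$ are split simple groups over $\Q$ and $\K^{\calE_i}\supset\Q$ is infinite, so $H_i(\K^{\calE_i})$ is generated by its root subgroups and hence perfect; but you should say so rather than leave it implicit. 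Second, the identification $m(\{1\}\times T')=T$: from $S\cdot m(T')=G^\circ=S\cdot T$ and $m(T')\subset T$ one gets $T\subset (S\cap T)\cdot m(T')$, a finite union of cosets of the closed subgroup $m(T')$; connectedness of $T$ then forces $T=m(T')$. This is the ``short argument'' you allude to, and it is worth writing out since equality of differential algebraic subgroups does not follow from a dimension count alone. With these two points made explicit, your proof goes through.
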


Let $\Rep_{(n)}G$ denote the tensor subcategory of $\Rep G$ generated by $P^n(W)$ (the $n$th total prolongation). The following Proposition shows that $\Rep_{(n)}G$ does not depend on the choice of $W$.

\begin{proposition}\label{prop:RepnGnotdepW}
For all $V\in\Rep G$, $V\in\Rep_{(n)}G$ if and only if $V=V_{(n)}$.
\end{proposition}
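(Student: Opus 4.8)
The plan is to prove both implications by relating the category $\Rep_{(n)}G$ to the subalgebra $A_{(n)}\subset A$ generated by the $W$-filtration piece $A_n$. First I would establish the "only if" direction. Since $\Rep_{(n)}G$ is generated (as a tensor category, closed under subquotients and duals) by $P^n(W)$, it suffices to check that $P^n(W)=P^n(W)_{(n)}$ and that the class of $V$ with $V=V_{(n)}$ is closed under tensor products, duals, submodules, and quotients. Closure under submodules and direct sums is immediate from~\eqref{eq:Restriction} and~\eqref{eq:nSum} applied to the $A_{(n)}$-analogues of $V_n$ (i.e. $V_{(n)}$). Closure under tensor products follows because $A_{(n)}$ is a subalgebra: if $\varrho_U(U)\subset U\otimes A_{(n)}$ and $\varrho_V(V)\subset V\otimes A_{(n)}$, then $\varrho_{U\otimes V}(U\otimes V)\subset U\otimes V\otimes A_{(n)}$ since $A_{(n)}\cdot A_{(n)}=A_{(n)}$. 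For duals one uses that $A_{(n)}$ is a Hopf subalgebra (stated in the text just before Proposition~\ref{prop:SubalgFilt}), so the antipode preserves it; hence if $V=V_{(n)}$ then $V^\vee=V^\vee_{(n)}$, and then quotients are handled via $(V/U)^\vee\hookrightarrow V^\vee$. Finally, the generator: $W=W_0\subset A_0\subset A_{(n)}$, and by the matrix description of prolongations in~\eqref{eq:prolongation}ff., the entries of $P_i(\rho)$ lie in $A_1\subset A_{(n)}$ for $n\Ge 1$; iterating, the coefficients of $P^n(\rho)$ lie in the subalgebra generated by $A_1$, which is contained in $A_{(n)}$. (For $n=0$ one has $P^0(W)=W=W_0$ and $\Rep_{(0)}G$ is just $\Rep H^{\diff}$-type polynomial modules, where the statement reduces to Lemma~\ref{lem:socV}.) This gives $P^n(W)=P^n(W)_{(n)}$, and hence every $V\in\Rep_{(n)}G$ satisfies $V=V_{(n)}$.

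For the converse, suppose $V=V_{(n)}$; I must show $V\in\Rep_{(n)}G$. By Proposition~\ref{prop:BasicPropsA}, $V$ embeds into $A^{\dim V}$, and since $V=V_{(n)}$ this embedding lands in $A_{(n)}^{\dim V}$; thus it suffices to show that $A_{(n)}$, as a $G$-module, lies in $\TRep_{(n)}G$, i.e. is a direct limit of modules in $\Rep_{(n)}G$. Now $A_{(n)}$ is generated as an algebra by $A_n$, and $A_n$ is generated as a $\K$-vector space by products $\prod_{j\in J}\theta_j y_j$ with $y_j\in A_0$ and $\sum_j\ord\theta_j\Le n$, by~\eqref{eq:defAn}. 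The key observation is that each such element lies in a subcomodule isomorphic to a subquotient of a total prolongation: an element $\theta y$ with $\ord\theta\Le n$ and $y$ in a finite-dimensional subcomodule $U\subset A_0$ lies in $P^n(U)$ (viewed inside $A$ via the universal property of prolongations and the comodule structure), and $U$ itself, being polynomial, is a subquotient of $W^{\otimes k}\oplus\ldots$ hence of $P^n(W)^{\otimes k}$ for suitable $k$. Products of such elements lie in tensor products of prolongations, which are again in $\Rep_{(n)}G$. Taking the union over all finite-dimensional subcomodules exhibits $A_{(n)}\in\TRep_{(n)}G$, and therefore $V\in\Rep_{(n)}G$.

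The main obstacle I expect is making precise the claim that a product $\prod_{j\in J}\theta_j y_j$ with $\sum\ord\theta_j\Le n$ sits inside an object of $\Rep_{(n)}G$ rather than merely inside $\Rep_{(\sum\ord\theta_j)}G$ with the prolongation orders applied separately to each factor — one must check that assembling the factors into a single tensor product of prolongations of $W$ does not force the total prolongation order above $n$. This is exactly where the subalgebra structure~\eqref{eq:ProductAn} (namely $A_iA_j\subset A_{i+j}$, and more relevantly that $A_{(n)}$ is generated by $A_n$ alone, not by all the $A_i$) is used: since $\Rep_{(n)}G$ is a \emph{tensor} category, once each factor $\theta_j y_j$ is realized inside $P^n(W)^{\otimes k_j}$ (possible because $\ord\theta_j\Le\sum\ord\theta_j\Le n$), the product lies in $P^n(W)^{\otimes(\sum k_j)}\in\Rep_{(n)}G$. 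Once this bookkeeping is set up carefully, both inclusions follow, and the independence of $\Rep_{(n)}G$ from the choice of $W$ is a formal consequence, since the characterization $V=V_{(n)}$ makes no reference to $W$ (the $W$-filtration being independent of $W$ by Corollary~\ref{cor:FiltRed}).
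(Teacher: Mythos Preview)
Your forward direction follows the paper's line but contains a slip: you assert that iterating prolongations keeps the matrix entries in the subalgebra generated by $A_1$, i.e.\ in $A_{(1)}$. That is already false for $m=1$ and $n\ge 2$, since the entries of $P_1^n(W)$ include $\partial_1^{\,n}x_{ij}\in A_n\setminus A_{(1)}$. What one actually uses (and what the paper states) is that the entries of $P^n(W)$ lie in $A_{(n)}$; combined with the closure of the class $\{V:V=V_{(n)}\}$ under subquotients, tensor products and duals (which you do spell out correctly, using that $A_{(n)}$ is a Hopf subalgebra), this gives the implication.

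For the converse you take a genuinely different and much longer route than the paper. The paper dispatches it in two sentences by recognising the group behind $A_{(n)}$: since $A_{(n)}$ is a finitely generated Hopf subalgebra of $A$, it is the coordinate ring of a linear \emph{algebraic} group $G_{(n)}$, and $V=V_{(n)}$ says precisely that $V$ is a $G_{(n)}$-representation. Because $P^n(W)$ is a faithful $A$-comodule it is a faithful $A_{(n)}$-comodule, and the standard Tannakian fact for LAGs---every finite-dimensional representation lies in the tensor category generated by any faithful one---gives $V\in\langle P^n(W)\rangle_\otimes=\Rep_{(n)}G$ at once. Your approach instead embeds $V$ into $A_{(n)}^{\dim V}$ and then tries to cover $A_{(n)}$ explicitly by images of $G$-module maps from tensor constructions in $P^n(W)$, which forces you to verify by hand how $P^n$ interacts with tensor products, duals, subquotients, and the multiplication $A\otimes A\to A$---exactly the ``bookkeeping'' you flag as the main obstacle. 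That programme can be pushed through, but it is entirely subsumed by the single invocation of the representation theory of the ordinary algebraic group $G_{(n)}$; once you pass to $G_{(n)}$ you are no longer doing differential algebra at all, and the converse becomes classical.
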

\begin{proof}
Suppose $V\in\Rep_{(n)}G$. Since the matrix entries of $P^n(W)$ belong to $A_{(n)}$, we have $V=V_{(n)}$. Conversely, suppose $V=V_{(n)}$. Then $V$ is a representation of the LAG $G_{(n)}$ whose Hopf algebra is $A_{(n)}$. Since $P^n(W)$ is a faithful $A$-comodule, it is a faithful $A_{(n)}$-comodule. Hence, $\Rep G_{(n)}$ is generated by $P^n(W)$.
\end{proof}

If $\tau(G)\Le 0$, then, by~\cite[Section~3.2.1]{MiOvSi}, there exists $n\in\N$ such that 
$$
\Rep G=\left\langle \Rep_{(n)}G\right\rangle_{\otimes}.
$$
The smallest such $n$ will be denoted by $\ord(G)$.  For a $G$-module $V$, let $\Ll(V)$ denote the length of the socle filtration of $V$. In particular, we have $$\Ll(V)\Le\dim V.$$

For a $G$-module $V$, let $\Ll(V)$ denote the length of the socle filtration of $V$. In particular, we have $$\Ll(V)\Le\dim V.$$

\begin{theorem}\label{thm:bound}
Let $G$ be a reductive LDAG with $\tau\big({Z(G)}^\circ\big) \Le 0$ and $T:=Z{(G^\circ)}^\circ$. For all $V\in\Rep G$, we have $V\in\Rep_{(n)}G$, where
\begin{equation}\label{eq:UBound}
n=\max\{\Ll(V)-1, \ord(T)\}.
\end{equation}
\end{theorem}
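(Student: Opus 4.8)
The plan is to reduce the general reductive case to the semisimple case (Theorem~\ref{thm:EqualFiltrations}) and the torus case (which is controlled by $\ord(T)$), using the decomposition $\mu: S\times T\to G^\circ$ of Proposition~\ref{prop:RedDecomp} together with the transfer results of Proposition~\ref{prop:QuasiIsom} and Proposition~\ref{prop:DirectProduct}. By Proposition~\ref{prop:RepnGnotdepW}, the conclusion $V\in\Rep_{(n)}G$ is equivalent to $V=V_{(n)}$, so it suffices to show $V=V_{(n)}$ for $n$ as in~\eqref{eq:UBound}.

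First I would reduce to $G$ connected: if $V\in\Rep G$ and we set $G^\circ$ in place of $G$, then applying Proposition~\ref{prop:QuasiIsom} to the inclusion $G^\circ\hookrightarrow G$ (finite index, trivial kernel) gives $V=V_{(n)}$ as a $G$-module iff $_{G^\circ}V=(_{G^\circ}V)_{(n)}$; and the socle length can only be affected by a bounded factor, but in fact since passing to $G^\circ$ does not increase $\Ll$ in the relevant sense and $\ord(T)$ is computed for the connected group $T=Z(G^\circ)^\circ$ anyway, the bound for $G^\circ$ suffices. (I would need to check carefully that $\Ll$ used for $G$ and for $G^\circ$ give the same $n$; the safe route is to note $\soc$-filtrations as $G$- and $G^\circ$-modules coincide because a $G$-submodule generated by a $G^\circ$-simple module is a sum of $G^\circ$-simples, hence $G^\circ$-semisimple.) So assume $G=G^\circ$ is connected reductive with $\tau(T)\le 0$.

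Next, pull $V$ back along $\mu: S\times T\to G$, a surjection with finite kernel, to get the $(S\times T)$-module $_{S\times T}V$. By Proposition~\ref{prop:QuasiIsom} (the $\Leftarrow$ direction of~\eqref{eq:phi*^-1}), it is enough to show $_{S\times T}V=(_{S\times T}V)_{(n)}$, i.e.\ $_{S\times T}V=(_{S\times T}V)_{n',n'}$ with $n'=n$, since $V_{(n)}=V_{n,n}$ would follow. Now apply Proposition~\ref{prop:DirectProduct}: it suffices to bound the $S$-filtration and the $T$-filtration of $_{S\times T}V$ separately. For the restriction to $S$: $S$ is semisimple (Proposition~\ref{prop:RedDecomp}), so Theorem~\ref{thm:EqualFiltrations} gives $(_SV)_k=\soc^{k+1}(_SV)$, hence $_SV=(_SV)_{\Ll(_SV)-1}$, and since $\Ll(_SV)\le\Ll(V)$ (restriction cannot lengthen the socle filtration — this follows from Proposition~\ref{prop: PropsOfSoc}(1) applied to the identity, or from the fact that an $S$-simple subquotient sits inside a $G$-simple one), we get $_SV=(_SV)_{\Ll(V)-1}$, and a fortiori $_SV=(_SV)_{\Ll(V)-1,\,\Ll(V)-1}$ since $A_s\subset A_{(s)}$. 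For the restriction to $T$: by definition of $\ord(T)$ and Proposition~\ref{prop:RepnGnotdepW}, every $T$-module satisfies $_TV=(_TV)_{(\ord(T))}=(_TV)_{\ord(T),\ord(T)}$. Combining via Proposition~\ref{prop:DirectProduct} with $s_1=t_1=\Ll(V)-1$ and $s_2=t_2=\ord(T)$ gives $_{S\times T}V=(_{S\times T}V)_{s,t}$ with $t=\max\{\Ll(V)-1,\ord(T)\}=n$, hence in particular $_{S\times T}V=(_{S\times T}V)_{(n)}$, and transferring back along $\mu$ finishes the proof.

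The main obstacle I expect is bookkeeping around the two filtration indices $(s,t)$ and making sure the comparisons $\Ll(_SV)\le\Ll(V)$ and the behavior of socle length under the finite-kernel, finite-index morphisms are rigorous — in particular that passing between $G$, $G^\circ$, and $S\times T$ does not change the value of $n$ in~\eqref{eq:UBound}. The key technical inputs (Theorem~\ref{thm:EqualFiltrations}, Propositions~\ref{prop:QuasiIsom}, \ref{prop:DirectProduct}, \ref{prop:RepnGnotdepW}, \ref{prop:RedDecomp}) do all the real work; the argument is essentially assembling them in the right order, with the only genuinely delicate point being the monotonicity of socle length under restriction to a normal subgroup of finite index and to the semisimple factor $S$.
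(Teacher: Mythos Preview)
Your proposal is correct and follows essentially the same route as the paper: reduce via Proposition~\ref{prop:QuasiIsom} along $\mu:S\times T\to G$, bound the $S$-part by Theorem~\ref{thm:EqualFiltrations}, the $T$-part by $\ord(T)$, and combine using Proposition~\ref{prop:DirectProduct}. Two small points where the paper is tighter: the separate reduction to $G^\circ$ is unnecessary since $\mu$ already has finite kernel and finite-index image, so a single application of Proposition~\ref{prop:QuasiIsom} suffices; and the inequality $\Ll(_SV)\le\Ll(V)$ is justified not via Proposition~\ref{prop: PropsOfSoc}(1) (which compares socles for a fixed group) but by showing that a semisimple $G$-module restricts to a semisimple $S$-module, using formula~\eqref{eq:phi*} together with Lemma~\ref{lem:socV}.
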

 
\begin{proof}
Let $V\in\Rep G$. By Proposition~\ref{prop:RepnGnotdepW}, we need to show that $V=V_{(n)}$, where $n$ is given by~\eqref{eq:UBound}. Set $\widetilde{G}:=S\times T$, where $S\subset G$ is the differential commutator subgroup of $G^\circ$. The multiplication map $\mu: \widetilde{G}\to G$ (see Proposition~\ref{prop:RedDecomp}) induces the structure of a $\widetilde{G}$-module on the space $V$, which we will denote by $\widetilde{V}$. By Theorem~\ref{thm:EqualFiltrations},
$$
_S\widetilde{V}={}_S\widetilde{V}_{r}={}_S\widetilde{V}_{(r)},
$$
where $$r=\Ll\big({}_S\widetilde{V}\big)-1=\Ll(_SV)-1.$$ It follows from Proposition~\ref{prop:QuasiIsom} (formula~\eqref{eq:phi*}) and Lemma~\ref{lem:socV} that, if $W\in\Rep G$ is semisimple, then $_SW\in\Rep S$ is semisimple. Hence, $$\Ll(_SV)\Le \Ll(V).$$ Therefore,
$$
_S\widetilde{V}=_S\widetilde{V}_{(s)},\ s:=\Ll(V)-1.
$$
Next, since $\tau(T)\Le 0$, we have $$\Rep T=\Rep_{(t)}T,\quad t:=\ord(T).$$
By Proposition~\ref{prop:RepnGnotdepW}, $_T\widetilde{V}={}_T\widetilde{V}_{(t)}$. Proposition~\ref{prop:DirectProduct} implies 
$$
\widetilde{V}=\widetilde{V}_{(\max\{s,t\})}=\widetilde{V}_{(n)}.
$$
Now, applying Proposition~\ref{prop:QuasiIsom} to $\phi:=\mu$, we obtain $V=V_{(n)}$.
\end{proof}

The following proposition suggests an algorithm to find $\ord(T)$. 

\begin{proposition}\label{prop:FindOrdT}
Let $G\subset\GL(W)$ be a reductive LDAG with $\tau\big({Z(G)}^\circ\big) \Le 0$, where the $G$-module $W$ is semisimple. Set $T:={Z(G^\circ)}^\circ$ and $H:=\overline{G} \subset\GL(W)$. Let
$$
\varrho: H\to\GL(U)
$$ 
be an algebraic representation with $\Ker\varrho=[H^\circ,H^\circ]$.
Then $\ord(T)$ is the minimal number $t$ such that the differential tensor category generated by $_GU\in\Rep G$ coincides with the tensor category generated by $P^t(_GU)\in\Rep G$.
\end{proposition}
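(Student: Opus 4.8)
The plan is to replace the torus $T$ by the LDAG $\bar G_U\subset\GL({}_GU)$, the image of $G$ acting on ${}_GU$, and to show that (i) the two categories in the statement are identified with $\Rep\bar G_U$ and $\Rep_{(t)}\bar G_U$, so the minimal such $t$ is $\ord(\bar G_U)$, and (ii) $\ord(\bar G_U)=\ord(T)$.

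First I would note that ${}_GU$ is a \emph{semisimple} $G$-module: since $\varrho$ is an algebraic representation of $H=\overline G$, the comodule map $U\to U\otimes_\K A$ (with $A=\K\{G\}$) factors through $U\otimes_\K A_0$, so ${}_GU=({}_GU)_0$, and Lemma~\ref{lem:socV} (recall that $W$ is semisimple) gives semisimplicity. Hence $\bar G_U$ is a reductive LDAG, since its Zariski closure is $\varrho(H)=H/[H^\circ,H^\circ]$, a reductive algebraic group, and Remark~\ref{rem:GbarG} applies; moreover ${}_GU$ is a faithful semisimple $\bar G_U$-module. Now, by (differential) Tannakian duality the differential tensor category generated by ${}_GU$, viewed in $\Rep G$ via pullback along $G\to\bar G_U$, is $\Rep\bar G_U$. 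On the other hand ${}_GU\subset P^t({}_GU)$ as $\bar G_U$-modules (iterate $\iota_i$ from~\eqref{eq:es}), so $P^t({}_GU)$ is a faithful comodule over $\big(A_U\big)_{(t)}$, where $A_U:=\K\{\bar G_U\}$; arguing as in the proof of Proposition~\ref{prop:RepnGnotdepW}, the (non-differential) tensor category generated by $P^t({}_GU)$ is $\Rep_{(t)}\bar G_U$. Since $\Rep_{(t)}\bar G_U$ is a tensor subcategory, increasing in $t$, the two categories coincide if and only if $\Rep\bar G_U=\Rep_{(t)}\bar G_U$, i.e.\ if and only if $t\ge\ord(\bar G_U)$; hence the minimal such $t$ equals $\ord(\bar G_U)$.

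It remains to prove $\ord(\bar G_U)=\ord(T)$. I would first reduce to the identity component: applying Proposition~\ref{prop:QuasiIsom} to the inclusion $(\bar G_U)^\circ\hookrightarrow\bar G_U$ (trivial kernel, finite index), together with the fact that in characteristic $0$ every $(\bar G_U)^\circ$-module is a direct summand of the restriction of a $\bar G_U$-module (cf.\ the proof of Proposition~\ref{prop:QuasiIsom}, Case~(i)), gives $\ord(\bar G_U)=\ord\big((\bar G_U)^\circ\big)$. Then I would exhibit an isogeny $\phi:=\varrho|_T\colon T\to(\bar G_U)^\circ$: it is surjective because $\varrho$ kills the differential commutator subgroup $S$ of $G^\circ$ while $G^\circ=S\cdot T$ by Proposition~\ref{prop:RedDecomp}, and its kernel is $T\cap[H^\circ,H^\circ]\subset Z(H^\circ)^\circ\cap[H^\circ,H^\circ]$, which is finite since $H^\circ$ is reductive (here $\overline T\subset Z(H^\circ)^\circ$ because $T$ centralizes the dense subgroup $G^\circ\subset H^\circ$). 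Finally I would deduce $\ord\big((\bar G_U)^\circ\big)=\ord(T)$ from Proposition~\ref{prop:QuasiIsom} applied to $\phi$ (formula~\eqref{eq:phi*^-1}), writing $\ord(L)$ as the least $n$ with $\K\{L\}=\big(\K\{L\}\big)_{(n)}$: one implication is immediate from $\phi^*$ preserving the $W$-filtrations, and for the other I would use that $\K\{T\}$ is generated as an algebra by $\phi^*\K\{(\bar G_U)^\circ\}$ together with finitely many \emph{order-zero} elements, namely characters $c_j\in A_0$ of $\overline T$ generating $\K[\overline T]$ over $\K[\overline{(\bar G_U)^\circ}]$ (the map $\overline\phi$ being an algebraic isogeny of tori); this is checked via the identity $\partial_l(c)=c\cdot\partial_l(c^N)\big/(Nc^N)$ for a character $c$ with $c^N$ pulled back from $(\bar G_U)^\circ$, which shows that the $\phi^*\K\{(\bar G_U)^\circ\}$-span of the $c_j$ is already a differential subalgebra of $\K\{T\}$, hence equal to $\K\{T\}$.

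The main obstacle is this last step — transporting the equality $\K\{L\}=\big(\K\{L\}\big)_{(t)}$ across $\phi$ in the direction from $(\bar G_U)^\circ$ to $T$. The difficulty is that $\Rep(\bar G_U)^\circ$ is only a proper tensor subcategory of $\Rep T$, so one cannot simply invoke fully-faithfulness of the pullback along $\phi$; one genuinely needs the finiteness of $\K\{T\}$ over $\phi^*\K\{(\bar G_U)^\circ\}$ already at filtration level $0$, which is precisely where the fact that $\phi$ is an algebraic isogeny on Zariski closures (rather than an arbitrary finite morphism) enters.
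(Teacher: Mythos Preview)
Your approach is the same as the paper's in outline: the paper's entire proof reads ``We have $\varrho(G)=\varrho(T)$ and $\Ker\varrho\cap T$ is finite. Propositions~\ref{prop:QuasiIsom} and~\ref{prop:RepnGnotdepW} complete the proof.'' That is, both arguments identify the minimal $t$ with $\ord(\varrho(G))$ via Proposition~\ref{prop:RepnGnotdepW} and then transfer this to $\ord(T)$ via Proposition~\ref{prop:QuasiIsom} applied to the near-isogeny $T\to\varrho(G)$.

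Where you go further is exactly at the point you flag as the ``main obstacle''. Proposition~\ref{prop:QuasiIsom} only speaks about $V\in\TRep L$; applied to $\phi:T\to L=\varrho(G)$ it yields that $A_L=(A_L)_{(t)}$ is equivalent to $\phi^*(A_L)\subset(A_T)_{(t)}$, which immediately gives $\ord(L)\le\ord(T)$ but \emph{not} the reverse inequality. The paper leaves this to the reader; you supply a clean argument via the identity $\partial_l(c)=c\cdot\partial_l(c^N)/(Nc^N)$ for a character $c$ with $c^N\in\phi^*(A_L)$, showing that the subalgebra of $A_T$ generated by $(A_T)_0$ and $\phi^*(A_L)$ is already differential, hence all of $A_T$. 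This is correct and is genuinely the missing step. Your intermediate reduction $\ord(\bar G_U)=\ord((\bar G_U)^\circ)$ is also a valid extra care: the paper's literal equality $\varrho(G)=\varrho(T)$ holds only when $G$ is connected (in general one only has $[\varrho(G):\varrho(T)]=|G/G^\circ|<\infty$), but either way the hypotheses of Proposition~\ref{prop:QuasiIsom} are met, so the conclusion is unaffected.

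One small remark: the justification ``cf.\ the proof of Proposition~\ref{prop:QuasiIsom}, Case~(i)'' for the fact that every $(\bar G_U)^\circ$-module is a summand of a restricted $\bar G_U$-module is not quite what that proof shows; the cleaner citation is the standard induction--restriction splitting for a finite-index subgroup in characteristic~$0$.
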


\begin{proof}
We have $\varrho(G)=\varrho(T)$ and $\Ker\varrho\cap T$ is finite. Propositions~\ref{prop:QuasiIsom} and~\ref{prop:RepnGnotdepW} complete the proof.
\end{proof}

\section{Computing parameterized differential Galois groups}\label{sec:main}
In this section, we show how the main results of the paper can be applied to constructing  algorithms that compute the maximal reductive quotient of a parameterized differential Galois group and decide if a parameterized Galois is reductive.

\subsection{Linear differential equations with parameters and their Galois theory}\label{sec:PPV}
In this section, we will briefly recall the parameterized differential Galois theory of linear differential equations, also known as the PPV theory \cite{PhyllisMichael}. Let $K$ be a $\Delta' = \{\partial, \partial_1, \ldots , \partial_m\}$-field and 
\begin{eqnarray}\label{EQN:LDE}
\partial Y &= &AY, \ \ A \in \Mn_n(K)
\end{eqnarray}
be a linear differential equation (with respect to $\partial$) over $K$.  A {\em parameterized Picard--Vessiot extension (PPV-extension)} $F$ of $K$ associated with \eqref{EQN:LDE} is a $\Delta'$-field $F \supset K$ such that there exists a $Z \in \GL_n(F)$ satisfying $\partial Z=AZ$, $F^\partial = K^\partial$, and   $F$ is generated over $K$ as a $\Delta'$-field by the entries of $Z$ (i.e., $F = K\langle Z\rangle$).  

The field $K^\partial$ is a $\Delta = \{\partial_1, \ldots, \partial_m\}$-field and, if it is differentially closed, a PPV-extension associated with \eqref{EQN:LDE} always exists and is unique up to a $\Delta'$-$K$-isomorphism~\cite[Proposition~9.6]{PhyllisMichael}. Moreover, if $K^\partial$ is relatively differentially closed in $K$, then $F$ exists as well~\cite[Thm~2.5]{GGO} (although it may not be unique). Some other situations concerning  the existence of $K$ have also been treated in~\cite{Wibmer:existence}. 

If $F = K\langle Z\rangle$ is a PPV-extension of $K$, one defines the {\em parameterized Picard--Vessiot Galois group (PPV-Galois group)} of $F$ over $K$ to be
  $$
 G:= \{\sigma : F\to F\:|\: \sigma\text{ is a field automorphism, } \sigma\delta=\delta\sigma \text{ for all } \delta\in\Delta', \text{ and } \sigma(a)=a,\ a\in K\}.
  $$
For any $\sigma \in G$, one can show that there exists a matrix $[\sigma]_Z \in \GL_n\big(K^\partial\big)$ such that $\sigma(Z) = Z[\sigma]_Z$ and the map $\sigma\mapsto [\sigma]_Z$ is an isomorphism of $G$ onto a differential algebraic subgroup (with respect to $\Delta$) of $\GL_n\big(K^\partial\big)$. 

One can also develop the PPV-theory in the language of modules. 
A finite-dimensional vector space $M$ over the $\Delta'$-field $K$ together with a map $\partial : M \to M$ 
  is called a {\it parameterized differential module} if $$\partial(m_1+m_2) = \partial(m_1)+\partial(m_2)\quad\text{and}\quad \partial(am_1)=\partial(a)m_1+a\partial(m_1),\quad m_1,m_2 \in M,\ a\in K.$$ 
 Let $\{e_1,\ldots,e_n\}$ be a $K$-basis  of $M$ and $a_{ij} \in K$ be such that $\partial(e_i) = -\sum_j a_{ji}e_j$, $1\Le i \Le n$. As in~\cite[Section~1.2]{Michael}, for $v = v_1e_1+\ldots+v_ne_n$, 
 $$\partial(v) = 0\quad \Longleftrightarrow\quad \partial\begin{pmatrix}v_1\\
 \vdots\\
 v_n\end{pmatrix}= A\begin{pmatrix}v_1\\
 \vdots\\
 v_n\end{pmatrix},\quad  A := (a_{ij})_{i,j=1}^n.$$ Therefore, once we have selected a basis, we can associate a linear differential equation of the form \eqref{EQN:LDE} with $M$. Conversely, given such an equation, we define a map $$\partial:K^n\rightarrow K^n,\quad\partial(e_i) = -\sum_j a_{ji}e_j,\quad A=(a_{ij})_{i,j=1}^n.$$  This  makes $K^n$ a parameterized differential module. The collection of parameterized differential modules over $K$ forms an abelian tensor category. In this category, one can define the notion of prolongation $M \mapsto P_i(M)$ similar to the notion of prolongation of a group action as in~\eqref{eq:prolongation}. For example, if $\partial Y = AY$ is the differential equation associated with the module $M$, then (with respect to a suitable basis) the equation associated with $P_i(M)$ is
 $$ \partial Y = \begin{pmatrix} A&\partial_i A\\ 0 & A \end{pmatrix}Y.$$
 Furthermore, if $Z$ is a solution matrix of $\partial Y = AY$, then 
 $$\begin{pmatrix} Z & \partial_i Z \\ 0 & Z\end{pmatrix}$$
 satisfies this latter equation. Similar to the $s^{th}$ total prolongation of a representation, we define the {\em $s^{th}$ total prolongation $P^s(M)$ of a module $M$}  as $$P^s(M) = P_1^sP_2^s\cdot \ldots\cdot P_m^s(M).$$
     If $F$ is a PPV-extension for \eqref{EQN:LDE}, one can define a $K^\partial$-vector space
 $$\omega(M) := \Ker(\partial: M\otimes_KF \to M\otimes_KF).$$
The correspondence $M \mapsto \omega(M)$ induces a functor $\omega$ (called a differential fiber functor) from the category of differential modules  to the category of finite-dimensional vector spaces over $K^\partial$ carrying $P_i$'s into the $P_i$'s (see  \cite[Defs.~4.9,\,4.22]{GGO}, \cite[Definition~2]{OvchTannakian}, \cite[Definition~4.2.7]{Moshe}, \cite[Definition~4.12]{Moshe2012} for more formal definitions). Moreover, 
\begin{eqnarray}\label{eq:tanequiv}
\left(\Rep_G,\mathrm{forget}\right)&\cong&\left(\big\langle P_1^{i_1}\cdot\ldots\cdot P_m^{i_m}(M)\:\big|\: i_1,\ldots,i_m\Ge 0\big\rangle_{\otimes},\omega\right)
\end{eqnarray}
 as differential tensor categories \cite[Thms.~4.27,\,5.1]{GGO}. This equivalence will be further used in the rest of the paper to help explain the  algorithms.  

In Section~\ref{sec:algorithmitself}, we shall restrict ourselves to PPV-extensions of certain special fields. We now describe these fields and give some further  properties of the PPV-theory over these fields.  Let $\K(x)$ be the $\Delta' = \{\partial, \partial_1, \ldots , \partial_m\}$-differential field defined as follows:
\begin{eqnarray}
(\text{i}) & &\mbox{ $\K$ is a differentially closed field with derivations $\Delta = \{\partial_1, \ldots , \partial_m\}$,}\notag\\
(\text{ii}) & & \mbox{ $x$ is transcendental over $\K$, and} \label{eq:Kx}\\
(\text{iii}) & & \mbox{ $\partial_i(x) = 0, \ i = 1, \ldots , m$, $\partial (x) = 1$ and $\partial(a) = 0$ for all $a\in\K$.}\notag
\end{eqnarray}
When one further restricts $\K$, Proposition~\ref{prop:PPVDFGG} characterizes the LDAGs that appear as PPV-Galois groups over such fields. We say that $\K$ is a {\it universal differential field} if, for any differential field  $k_0 \subset \K$ differentially finitely generated over $\Q$ and any differential field $k_1\supset k_0$ differentially finitely generated over $k_0$, there is a differential $k_0$-isomorphism of $k_1$ into $\K$ (\cite[Chapter~III,Section~7]{Kol}). Note that a universal differential field is differentially closed.

\begin{proposition}[cf. \cite{DreyfusDensity,ClaudineMichael}]\label{prop:PPVDFGG} Let $\K$ be a universal $\Delta$-field and let $\K(x)$ satisfy conditions~\eqref{eq:Kx}.  An LDAG $G$ is a parameterized differential Galois group over $\K(x)$ if and only if $G$ is a DFGG. 
\end{proposition}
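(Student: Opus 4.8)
The plan is to prove the two implications of Proposition~\ref{prop:PPVDFGG} separately, using the characterization of PPV-Galois groups over $\K(x)$ already available in the literature together with the structural results on DFGGs developed in Section~\ref{sec:DFGG}.

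\textbf{Necessity (PPV-Galois group $\Rightarrow$ DFGG).} Suppose $G$ is a parameterized differential Galois group over $\K(x)$. The first step is to invoke the cited density results: by \cite{DreyfusDensity,MiSiIsom}, a PPV-Galois group over a field such as $\K(x)$ (with $\K$ universal, hence satisfying the required restrictions on the field of constants) must contain a finitely generated subgroup that is Kolchin-dense. This is exactly the definition of a DFGG (Definition~\ref{def:DFGG}), so this direction is essentially a citation once one checks that $\K(x)$ with conditions~\eqref{eq:Kx} satisfies the hypotheses under which those density theorems apply; the point is that $\K$ being universal makes $\K^{\Delta}$ large enough and $x$ behaves like the classical independent variable with $\partial x=1$.

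\textbf{Sufficiency (DFGG $\Rightarrow$ PPV-Galois group).} This is the substantive direction. Given a DFGG $G$ over $\K$, I would use the converse part of the same circle of ideas (\cite{DreyfusDensity}, and in the non-reductive case \cite{MiSiIsom}, possibly together with \cite{ClaudineMichael}): one must construct a linear differential equation $\partial Y = AY$ over $\K(x)$ whose PPV-Galois group is $G$. The strategy is to realize $G$ as a Kolchin-dense subgroup generated by finitely many elements, lift these generators to an appropriate solution matrix, and build the equation so that the monodromy-type data and the behavior at the singular points (together with the derivation $\partial_i$ in the parameters) generate precisely $G$. Here the universality of $\K$ is what guarantees that the needed auxiliary differential field extensions embed into $\K$, so that the PPV-extension exists (via \cite[Proposition~9.6]{PhyllisMichael} or \cite[Thm~2.5]{GGO}) and the Galois group is exactly $G$ rather than a proper subgroup or overgroup.

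\textbf{Main obstacle.} The hard part is the sufficiency direction, and within it the reduction from a general DFGG to the situation where the known realization theorems apply: one typically first treats the connected reductive case (where Lemma~\ref{lem4} and the torus analysis of Lemmas~\ref{lem2}--\ref{lem3} control the relevant differential-type-zero conditions), then handles the unipotent radical and the component group by a dévissage, checking at each stage that a PPV-realization of a normal subgroup and of the quotient can be patched into a PPV-realization of $G$ over $\K(x)$. The delicate points are (a) ensuring the constants do not grow, i.e. $F^{\partial}=\K(x)^{\partial}=\K$, which is where $\K$ universal (hence differentially closed) is used, and (b) controlling that the patched equation's Galois group is not strictly larger than $G$, which requires a Kolchin-density argument of the type in Proposition~\ref{prop:finite}. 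Since all of these ingredients are either cited or already established in the excerpt, the proof is mostly an assembly, with the genuine content residing in the cited realization theorems.
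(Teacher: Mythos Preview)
The paper does not provide a proof of this proposition at all: it is stated with the attribution ``cf.~\cite{DreyfusDensity,ClaudineMichael}'' and treated as a known result imported from those references, with the text immediately moving on to Corollary~\ref{cor:type0}. Your proposal correctly identifies that both directions are supplied by the cited literature (density theorems for the forward direction, inverse/realization theorems for the converse) and is thus in agreement with the paper's approach; the extended discussion of the d\'evissage and patching in your ``Main obstacle'' paragraph is more than the paper asks for here, since the proposition is functioning as a citation rather than something proved in the paper.
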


Assuming that $\K$ is only differentially closed, one still has the following corollary.

\begin{corollary} \label{cor:type0} Let  $\K(x)$ satisfy conditions~\eqref{eq:Kx}. If $G$ is reductive and is a parameterized differential Galois group over $\K(x)$, then $\tau(Z(G^\circ)) \Le 0$.
\end{corollary}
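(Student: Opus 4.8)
The plan is to deduce this from Proposition~\ref{prop:PPVDFGG} (which needs a \emph{universal} field of parameters) together with Lemma~\ref{lem4}, by enlarging $\K$ to a universal differential field and checking that the objects involved behave well under base change. Since the linear differential system whose PPV-Galois group is $G$ has coefficients in $\K(x)$, only finitely many elements of $\K$ occur in it; by the base-change identification of Galois groups described below (applied to shrink $\K$) I may first assume that $\K$ is the differential closure of a differentially finitely generated extension of $\Q$, and then fix a universal differential extension $\mathcal{K}\supset\K$ (cf.~\cite[Chapter~III]{Kol}). Because $\K$ is differentially closed, hence algebraically closed, the field $\mathcal{K}(x)$ again satisfies~\eqref{eq:Kx}, and $\K(x)\subset\mathcal{K}(x)$ as $\Delta'$-fields.

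Next I would identify the Galois group after base change. Let $F$ be the PPV-extension of $\K(x)$ attached to the system, and let $\mathcal{G}$ be the PPV-Galois group of the same system over $\mathcal{K}(x)$ (which exists since $\mathcal{K}$ is differentially closed). As the PPV-ring inside $F$ is a $G$-torsor over $\K(x)$ and $F\otimes_\K\mathcal{K}$ is a domain ($\K$ being algebraically closed in $\mathcal{K}$), the standard base-change argument for the PPV theory (cf.~\cite{PhyllisMichael,GGO}) identifies $\mathcal{G}$ with $G\times_\K\mathcal{K}$. In particular $\mathcal{G}$ is reductive, since $\Ru(\mathcal{G})=\Ru(G)\times_\K\mathcal{K}=\{\id\}$, and its identity component is $\mathcal{G}^\circ=G^\circ\times_\K\mathcal{K}$.

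Finally I would conclude. By Proposition~\ref{prop:PPVDFGG}, $\mathcal{G}$ is a DFGG, hence so is $\mathcal{G}^\circ$ by Proposition~\ref{prop:finite}; applying Lemma~\ref{lem4} to the connected reductive LDAG $\mathcal{G}^\circ$ gives $\tau\big(Z(\mathcal{G}^\circ)^\circ\big)\Le 0$. (I pass to $\mathcal{G}^\circ$ precisely so that Lemma~\ref{lem4} is applied to a connected group, where $Z(\cdot)^\circ$ is unambiguous.) Over the algebraically closed field $\K$ the formation of the center and of the identity component commutes with base change, so $Z(\mathcal{G}^\circ)^\circ=Z(G^\circ)^\circ\times_\K\mathcal{K}$, and transcendence degree is unchanged under the extension $\K\subset\mathcal{K}$; therefore $\tau\big(Z(G^\circ)^\circ\big)=\tau\big(Z(\mathcal{G}^\circ)^\circ\big)\Le 0$, which (as $\tau$ depends only on the irreducible components) is exactly $\tau(Z(G^\circ))\Le 0$.

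The bookkeeping points---existence and $\Delta'$-structure of $\mathcal{K}(x)$, compatibility of $\Ru$, $Z(\,\cdot\,)$ and $(\,\cdot\,)^\circ$ with base change in characteristic zero over an algebraically closed field, and base-change invariance of $\tau$---should be routine. The step I expect to be the main obstacle is the identification $\mathcal{G}\cong G\times_\K\mathcal{K}$: one must verify that enlarging the differentially closed field of parameters does not change the parameterized differential Galois group beyond base change, which I would extract from the torsor structure of the PPV-ring together with the uniqueness of PPV-extensions over $\mathcal{K}(x)$.
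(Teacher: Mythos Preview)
Your proposal is correct and follows essentially the same route as the paper: enlarge the field of parameters to a universal differential field, identify the base-changed Galois group, invoke Proposition~\ref{prop:PPVDFGG} to get the DFGG property, apply Lemma~\ref{lem4}, and descend. The paper is terser---it simply asserts the existence of a universal $\U\supset\K$ without your preliminary shrinking step, and it performs the base change concretely by forming the quotient field of $\U\otimes_\K L$ rather than speaking of $G\times_\K\mathcal{K}$---but the skeleton of the argument is the same, and your extra care about passing to $\mathcal{G}^\circ$ before invoking Lemma~\ref{lem4} only makes the application of that lemma cleaner.
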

\begin{proof} Let $L$ be a PPV-extension of $\K(x)$ with parameterized differential Galois group $G$ and let $\U$ be a universal differential field containing $\K$ (such a field exists \cite[Chapter~III,Section~7]{Kol}). Since $\K$ is {\it a fortiori} algebraically closed, $ \U\otimes_\K L$ is a domain whose quotient field we denote by $\U L$. One sees that the $\Delta$-constants $\Const$ of $\U L$ are $\U$. We may identify the quotient field $\U(x)$ of  $\U \otimes_\K\K(x)$ with a subfield of $\U L$, and one sees that $\U L$ is a PPV-extension of $\U(x)$.  Furthermore, the parameterized differential Galois group of $\U L$ over $\U(x)$ is $G(\U)$ (see also \cite[Section~8]{GGO}). Proposition~\ref{prop:PPVDFGG} implies that $\G(\U)$ is a DFGG.  Lemma~\ref{lem4} implies that $${\rm tr.~deg.}_{\U} \U\left\langle Z{\left(G^\circ\right)}^\circ \right\rangle < \infty.$$ Since $G^\circ$ is defined over $\K$ and $\K$ is algebraically closed,  ${\rm tr.~deg.}_\K\K\left\langle Z{\left(G^\circ\right)}^\circ \right\rangle < \infty$.  Therefore, $\tau(Z(G^\circ)) \Le 0$.\end{proof}

\subsection{Equivalent statements of reductivity}
In this section, we give a characterization of parameterized differential modules whose PPV-Galois groups are reductive LDAGs, which will be  used in Section~\ref{sec:algorithmitself} to construct the main algorithms.

In this section, let $K$ be a differential field as at the beginning of Section~\ref{sec:PPV}.
Given a parameterized differential module $M$ such that it has a PPV-extension over $K$, let $G$ be its PPV-Galois group. Recall a construction of the ``diagonal part'' of $M$, denoted by $M_{\diag}$, which induces~\cite{OvchTannakian} a differential representation $$\rho_{\diag} : G \to \GL\left(\omega\left(M_{\diag}\right)\right),$$ where $\omega$ is the functor of solutions.
If $M$ is irreducible, we set $M_{\diag} = M$. Otherwise, if $N$ is a maximal differential submodule of $M$, we set $$M_{\diag} = N_{\diag}\oplus M/N.$$ 
Since $M$ is finite-dimensional and $\dim N < \dim M$, $M_{\diag}$ is well-defined above.  Another description of $M_{\diag}$ is: let
\begin{equation}\label{eq:moduleflag}
M = M_0\supset M_1\supset \ldots \supset M_r = \{0\}
\end{equation}
be a complete flag of differential submodules, that is, $M_{i-1}/M_{i}$ are irreducible. We then let $$M_{\diag} = \bigoplus_{i=1}^{r} M_{i-1}/M_{i}.$$
 A version of the Jordan--H\"older Theorem implies that $M_{\diag}$ is unique up to isomorphism.
Note that $M_{\diag}$ is a completely reducible differential module. The complete flag \eqref{eq:moduleflag} corresponds to a differential equation in block upper triangular form
\begin{equation}\label{eq:flageqn} \partial Y = \left(\begin{array}{ccccc} A_r&\ldots &\ldots &\ldots&\ldots \\ 0 & A_{r-1} &\ldots & \ldots&\ldots \\ \vdots&\vdots&\vdots&\vdots&\vdots\\ 0&\ldots&0&A_{2}&\ldots\\0&\ldots &0&0&A_1\end{array}\right)Y,
\end{equation}
where, for each matrix $A_i$, the differential module corresponding to $\partial Y=A_iY$ is irreducible.  The differential module $M_{\diag}$ corresponds to the block diagonal  equation
\begin{equation}\label{eq:diageqn}
 \partial Y = \left(\begin{array}{ccccc} A_r&0 &\ldots &\ldots&0 \\ 0 & A_{r-1} &0 & \ldots&0 \\ \vdots&\vdots&\vdots&\vdots&\vdots\\ 0&\ldots&0&A_{2}&0\\0&\ldots &0&0&A_1\end{array}\right)Y.
 \end{equation}

Furthermore, given a complete flag \eqref{eq:moduleflag}, we can identify the solution space of $M$ in the following way. Let $V$ be the solution space of $M$ and 
\begin{equation}\label{eq:flag}
V=V_0\supset V_1\supset\ldots\supset V_r=\{0\}
\end{equation}
be a complete flag of spaces of $V$ where each $V_i$ is the solution space of $M_i$.  Note that each $V_i$ is a $G$-submodule of $V$ and that all $V_i/V_{i+1}$ are simple $G$-modules.  One then sees that   
$$
V_{\diag}=\bigoplus_{i=1}^rV_{i-1}/V_i.
$$

\begin{proposition}\label{prop:equivdiag} Let $$\mu : G \to \overline{G}\big/\Ru{\left(\overline{G}\right)}\to\overline{G} \subset \GL(\omega(M))$$  be the morphisms (of LDAGs) corresponding to a Levi decomposition of $\overline{G}$. Then $\rho_{\diag} \cong \mu$.
\end{proposition}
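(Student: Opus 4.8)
The plan is to recognize that, as differential representations of $G$, both $\rho_{\diag}$ and $\mu$ are pullbacks along the \emph{same} homomorphism $\phi\colon G\to L$ of two $L$-modules that turn out to be isomorphic. Here $L\subset\overline{G}$ is the Levi subgroup supplied by the chosen Levi decomposition $\overline{G}=\Ru(\overline{G})\cdot L$, and $\phi\colon G\hookrightarrow\overline{G}\longrightarrow\overline{G}/\Ru(\overline{G})\xrightarrow{\sim}L$ is the composite of the inclusion, the canonical projection, and the inverse of the Levi isomorphism; by the very definition of $\mu$ in the statement, $\mu$ is $\phi$ followed by the inclusion $L\subset\GL(\omega(M))$. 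Throughout write $V:=\omega(M)$ and use the flag $V=V_0\supset V_1\supset\ldots\supset V_r=\{0\}$ of $G$-submodules with simple quotients, together with the identification $\omega(M_{\diag})=V_{\diag}=\bigoplus_{i=1}^rV_{i-1}/V_i$ coming from the exactness of the fiber functor $\omega$; under this identification $\rho_{\diag}$ is the $G$-action on the associated graded of $V$.

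First I would pass from $G$ to its Zariski closure. Since $G$ is Zariski-dense in $\overline{G}\subset\GL(V)$, every $G$-submodule of $V$ is $\overline{G}$-stable, so the flag above is a flag of $\overline{G}$-submodules and each quotient $V_{i-1}/V_i$ is an irreducible $\overline{G}$-module. A normal unipotent subgroup of a linear algebraic group over a field of characteristic zero acts trivially on every irreducible rational module (its invariants form a nonzero submodule), so $\Ru(\overline{G})$ acts trivially on each $V_{i-1}/V_i$ and hence on $V_{\diag}$. Therefore the $\overline{G}$-action on $V_{\diag}$ (which restricts to the $G$-action $\rho_{\diag}$) factors through $\overline{G}/\Ru(\overline{G})$; transporting it along the Levi isomorphism exhibits $\rho_{\diag}$ as $_GV_{\diag}$ for the homomorphism $\phi$ above, where $V_{\diag}$ carries the resulting $L$-module structure.

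Next I would compare $V$ and $V_{\diag}$ as $L$-modules. The flag $V_\bullet$ is also a filtration of $V$ by $L$-submodules, with the same subquotients $V_{i-1}/V_i$; since $\overline{G}=\Ru(\overline{G})\cdot L$ and $\Ru(\overline{G})$ acts trivially on $V_{i-1}/V_i$, any $L$-submodule of $V_{i-1}/V_i$ is automatically $\overline{G}$-stable, so $V_{i-1}/V_i$ is irreducible as an $L$-module. Hence the $L$-composition factors of $V$ are exactly the $V_{i-1}/V_i$. As $L$ is a reductive LAG and $\Char\K=0$, the $L$-module $V$ is completely reducible, so $V\cong\bigoplus_{i=1}^rV_{i-1}/V_i=V_{\diag}$ as $L$-modules, where on the right $V_{\diag}$ carries exactly the $L$-structure of the previous paragraph. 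Consequently $_GV\cong{}_GV_{\diag}$ as $G$-modules; but $_GV=\mu$ by the definition of $\mu$, and $_GV_{\diag}=\rho_{\diag}$ by the previous paragraph, whence $\mu\cong\rho_{\diag}$.

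I expect the genuine content to lie in the two classical inputs — the triviality of $\Ru(\overline{G})$ on irreducible $\overline{G}$-modules, and the complete reducibility of rational $L$-modules in characteristic zero — while the main point requiring care is bookkeeping: one must ensure that the copy of $L$ acting inside $\mu$ and the copy through which $\rho_{\diag}$ factors are the same subgroup with the same action, transported along the same isomorphism $\overline{G}/\Ru(\overline{G})\xrightarrow{\sim}L$, so that the $L$-module isomorphism $V\cong V_{\diag}$ is automatically $G$-equivariant for $\mu$ on the source and $\rho_{\diag}$ on the target.
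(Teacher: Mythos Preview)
Your argument is correct. Both you and the paper begin by showing that the $\overline{G}$-action on $V_{\diag}$ factors through $\overline{G}/\Ru(\overline{G})$, but the final steps differ. The paper proves the sharper statement $\Ker\rho_{\diag}=\Ru(\overline{G})$ (the reverse inclusion coming from the observation that any $g\in\Ker\rho_{\diag}$ acts unipotently on $V$, so the kernel is a connected normal unipotent subgroup of $\overline{G}$), and then invokes the conjugacy of Levi subgroups of $\overline{G}$ by $K^\partial$-points of $\Ru(\overline{G})$ to conclude that $\rho_{\diag}$ and $\mu$ are equivalent. You instead bypass the exact kernel computation and the conjugacy theorem: once $\rho_{\diag}$ factors through $L$, you compare $V$ and $V_{\diag}$ directly as $L$-modules and use complete reducibility of rational $L$-modules in characteristic zero to obtain $V\cong V_{\diag}$, hence $\mu\cong\rho_{\diag}$. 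Your route is more explicit about where the intertwiner comes from and avoids the Levi-conjugacy input; the paper's route yields the extra information $\Ker\rho_{\diag}=\Ru(\overline{G})$, though Corollary~\ref{cor:inj} only needs the isomorphism you establish.
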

\begin{proof}
	Since $\rho_{\diag}$ is completely reducible, $\omega\big(M_{\diag}\big)$ is a completely reducible $\rho_{\diag}{\left(\overline{G}\right)}$-module.  Therefore, $\rho_{\diag}{\left(\overline{G}\right)}$ is a reductive LAG \cite[Chapter~2]{SpringerInv}. Hence, $$\Ru{\left(\overline{G}\right)} \subset \Ker\rho_{\diag},$$ where $\rho_{\diag}$ is considered as a map from $\overline{G}$. On the other hand, by definition, $\Ker\rho_{\diag}$ consists of unipotent elements only.
Therefore, since  $\Ker\rho_{\diag}$ is a normal subgroup of $\overline{G}_M$ and connected by \cite[Corollary~8.5]{Waterhouse},
\begin{equation}\label{eq:kerdiag}
\Ker \rho_{\diag} = \Ru{\left(\overline{G}\right)}.
\end{equation} Since all Levi  $K^\partial$-subgroups of $\overline{G}$ are conjugate (by $K^\partial$-points of $\Ru{\left(\overline{G}_M\right)}$) \cite[Theorem~VIII.4.3]{Hochschild},~\eqref{eq:kerdiag} implies that $\rho_{\diag}$
is equivalent to $\mu$.
\end{proof}

\begin{corollary}\label{cor:inj} In the notation of Proposition~\ref{prop:equivdiag}, $\rho_{\diag}$ is faithful if and only if 
\begin{equation}\label{eq:modbyux}
G \to \overline{G}/\Ru{\left(\overline{G}\right)}
\end{equation} is injective.
\end{corollary}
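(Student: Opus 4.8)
The plan is to deduce the corollary directly from Proposition~\ref{prop:equivdiag}, which has just been established. That proposition identifies $\rho_{\diag}$ with the composite $\mu: G \to \overline{G}/\Ru(\overline{G}) \to \overline{G} \subset \GL(\omega(M))$ up to equivalence of representations. The key observation is that the second arrow in this composite, namely the Levi section $\overline{G}/\Ru(\overline{G}) \hookrightarrow \overline{G}$, is injective by construction (it is a section of the quotient map, realizing $\overline{G}/\Ru(\overline{G})$ as a Levi subgroup). Therefore the kernel of $\mu$ equals the kernel of the first arrow $G \to \overline{G}/\Ru(\overline{G})$, the map~\eqref{eq:modbyux}.

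First I would note that equivalent representations have the same kernel, so $\Ker\rho_{\diag} = \Ker\mu$ as subgroups of $G$. Next, since $\mu$ factors as $G \to \overline{G}/\Ru(\overline{G})$ followed by the injective inclusion of a Levi subgroup into $\overline{G}$ composed with the (faithful, as $\overline{G} \subset \GL(\omega(M))$) inclusion into $\GL(\omega(M))$, we get $\Ker\mu = \Ker\big(G \to \overline{G}/\Ru(\overline{G})\big)$. Combining these, $\rho_{\diag}$ is faithful, i.e., $\Ker\rho_{\diag} = \{\id\}$, if and only if $\Ker\big(G \to \overline{G}/\Ru(\overline{G})\big) = \{\id\}$, which is precisely the injectivity of~\eqref{eq:modbyux}.

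There is essentially no obstacle here; the corollary is a formal consequence of Proposition~\ref{prop:equivdiag} together with the elementary fact that a representation is faithful precisely when it has trivial kernel and that this property is invariant under equivalence. The only point requiring a word of care is that the Levi decomposition map $\overline{G}/\Ru(\overline{G}) \to \overline{G}$ is genuinely injective as a morphism of LDAGs (equivalently, of algebraic groups), which holds because it is a section of $\overline{G} \to \overline{G}/\Ru(\overline{G})$; this is implicit in the statement of Proposition~\ref{prop:equivdiag} and in~\eqref{eq:kerdiag} from its proof. I would therefore present the proof in just two or three sentences: equivalence of $\rho_{\diag}$ and $\mu$ gives equality of kernels, the Levi inclusion contributes nothing to the kernel, and hence faithfulness of $\rho_{\diag}$ is equivalent to injectivity of~\eqref{eq:modbyux}.
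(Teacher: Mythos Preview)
Your proposal is correct and follows essentially the same approach as the paper: the paper's proof is the one-line observation that $\rho_{\diag}\cong\mu$ by Proposition~\ref{prop:equivdiag}, so faithfulness of $\rho_{\diag}$ is equivalent to faithfulness of $\mu$, which is precisely the injectivity of~\eqref{eq:modbyux}. Your version simply spells out in more detail why the Levi section and the inclusion $\overline{G}\subset\GL(\omega(M))$ contribute nothing to the kernel.
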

\begin{proof}
Since $\rho_{\diag}\cong\mu$ by Proposition~\ref{prop:equivdiag}, faithfulness of $\rho_{\diag}$ is equivalent to that of $\mu$, which is precisely the injectivity of~\eqref{eq:modbyux}.
\end{proof}

\begin{proposition}\label{prop:equiv} The following statements are equivalent:
\begin{enumerate}
\item\label{it:311} $\rho_{\diag}$ is faithful, 
\item\label{it:312} $G$ is a reductive LDAG,
\item\label{it:313} there exists $q\Ge 0$ such that 
\begin{equation}\label{eq:pX} 
M \in {\left\langle P^q\left(M_{\diag}\right)\right\rangle}_{\otimes}.
\end{equation}
\end{enumerate}
\end{proposition}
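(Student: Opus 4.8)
The plan is to establish $(1)\Leftrightarrow(2)$ directly and then to run the cycle $(2)\Rightarrow(3)\Rightarrow(2)$, so that all three are equivalent. For $(1)\Leftrightarrow(2)$ I would simply combine Corollary~\ref{cor:inj} with Remark~\ref{rem:GbarG}: the corollary says $\rho_{\diag}$ is faithful if and only if $G\to\overline{G}/\Ru(\overline{G})$ is injective, i.e.\ if and only if $\Ru(\overline{G})\cap G=\{\id\}$, and by Remark~\ref{rem:GbarG} that intersection equals $\Ru(G)$; so faithfulness of $\rho_{\diag}$ is equivalent to $\Ru(G)=\{\id\}$, which is exactly reductivity of $G$.

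For $(2)\Rightarrow(3)$, assume $G$ is reductive, so $\rho_{\diag}$ is faithful by the step above. The module $W:=\omega(M_{\diag})$ is semisimple: since $M_{\diag}=\bigoplus_i M_{i-1}/M_i$ has irreducible summands, $\omega(M_{\diag})=\bigoplus_i V_{i-1}/V_i$ is a sum of simple $G$-modules. Hence $W$ is a faithful semisimple $G$-module, so by Corollary~\ref{cor:FiltRed} and Proposition~\ref{prop:RepnGnotdepW} the category $\Rep_{(q)}G$ is generated by $P^q(W)$ for every $q$, and a module $V\in\Rep G$ lies in $\Rep_{(q)}G$ exactly when $V=V_{(q)}$. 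Now $\omega(M)$ is finite dimensional and $A=\bigcup_q A_{(q)}$ (Proposition~\ref{prop:PropertiesOfAn} and the definition of $A_{(q)}$), so the embedding of $\omega(M)$ into $A^{\dim\omega(M)}$ lands in $A_{(q)}^{\dim\omega(M)}$ for $q$ large, whence $\omega(M)=\omega(M)_{(q)}$ because $A_{(q)}$ is a Hopf subalgebra. Transporting this along the differential tensor equivalence~\eqref{eq:tanequiv} gives $M\in\langle P^q(M_{\diag})\rangle_\otimes$. (If moreover $\tau(Z(G^\circ))\Le 0$, one can quote Theorem~\ref{thm:bound} to produce an explicit value of $q$; for the present statement only existence is needed.)

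For $(3)\Rightarrow(2)$, suppose $M\in\langle P^q(M_{\diag})\rangle_\otimes$. By Proposition~\ref{prop:equivdiag} and~\eqref{eq:kerdiag}, $\Ker\rho_{\diag}=\Ru(\overline{G})\cap G=\Ru(G)$, so $\rho_{\diag}$ restricts to the trivial representation of $\Ru(G)$. Since the prolongation functor $P_i$ is compatible with pullback along morphisms of differential algebraic groups — in particular along the inclusion $\Ru(G)\hookrightarrow G$ — and $P_i$ applied to a trivial representation is again trivial (its off-diagonal block is $\partial_i(\id)=0$), the group $\Ru(G)$ acts trivially on $P^q(W)=P^q(\omega(M_{\diag}))$, hence on every object of $\langle P^q(W)\rangle_\otimes$, and in particular on $\omega(M)$. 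But the PPV-Galois group acts faithfully on the solution space, so $\omega(M)$ is a faithful $G$-module; therefore $\Ru(G)=\{\id\}$, which is statement $(2)$.

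I expect the delicate point to be the claim, used in $(3)\Rightarrow(2)$, that $\Ru(G)$ acts trivially not merely on $\omega(M_{\diag})$ but on all of its prolongations $P^q(\omega(M_{\diag}))$. This is precisely where one uses that the restriction homomorphism $\K\{G\}\to\K\{\Ru(G)\}$ is a \emph{differential} homomorphism, so that $\partial_i$ of the constant matrix $\rho_{\diag}|_{\Ru(G)}$ vanishes — i.e.\ prolongation does not enlarge the effective Galois image of the completely reducible module $M_{\diag}$. Making this precise is just a matter of unwinding the definition~\eqref{eq:prolongation} of $P_i$ and checking that $P_i(\rho\circ\psi)=P_i(\rho)\circ\psi$ for a morphism $\psi$ of differential algebraic groups, but conceptually it is the heart of the equivalence.
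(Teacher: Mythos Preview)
Your proof is correct and follows essentially the same logical route as the paper's. The equivalence $(1)\Leftrightarrow(2)$ via Corollary~\ref{cor:inj} and the identification $\Ru(G)=\Ru(\overline{G})\cap G$ is exactly what the paper does (the paper splits it as $(2)\Rightarrow(1)$ from this argument and $(1)\Rightarrow(2)$ from a citation to~\cite{diffreductive}, but your packaging is if anything cleaner). Your $(3)\Rightarrow(2)$ is also the same idea as the paper's $(3)\Rightarrow(1)$: a nontrivial kernel of $\rho_{\diag}$ persists through all prolongations and tensor constructions, contradicting faithfulness of $\omega(M)$.

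The one place you diverge is $(2)\Rightarrow(3)$: the paper simply cites \cite[Proposition~2]{OvchRecoverGroup} and \cite[Corollaries~3 and~4]{OvchTannakian}, whereas you give a self-contained argument using the paper's own filtration machinery (finite-dimensionality of $\omega(M)$ forces its comodule coefficients into some $A_{(q)}$, hence $\omega(M)=\omega(M)_{(q)}$, hence $\omega(M)\in\Rep_{(q)}G$ by Proposition~\ref{prop:RepnGnotdepW}). This is a genuine, if minor, improvement in self-containment. One small quibble: the phrase ``because $A_{(q)}$ is a Hopf subalgebra'' is not quite the reason; what you actually use is the analogue of~\eqref{eq:Restriction} for the $(\cdot)_{(q)}$-filtration, namely that a submodule of $A_{(q)}^{\dim\omega(M)}$ automatically satisfies $V=V_{(q)}$.
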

\begin{proof}
\eqref{it:311} implies \eqref{it:313} by \cite[Proposition~2]{OvchRecoverGroup} and \cite[Corollary~3 and~4]{OvchTannakian}. If a differential representation $\mu$ of the LDAG $G$ is not faithful, so are the objects in the category ${\left\langle P^q(\mu)\right\rangle}_\otimes$ for all $q \Ge 0$. Using the equivalence of neutral differential Tannakian categories from~\cite[Theorem~2]{OvchTannakian}, this shows that~\eqref{it:313} implies~\eqref{it:311}.

 If $\rho_{\diag}$ is faithful, then $G$ is reductive by the first part of the proof of \cite[Theorem~4.7]{diffreductive}, showing that~\eqref{it:311} implies~\eqref{it:312}. Suppose now that $G$ is a reductive LDAG.  
 Since $\Ru{\left(\overline{G}\right)}\cap G$ is a connected normal unipotent differential algebraic subgroup of $G$, it is equal to $\{\id\}$. Thus,~\eqref{eq:modbyux} is injective and, by Corollary~\ref{cor:inj},~\eqref{it:312} implies~\eqref{it:311}.
\end{proof}

\subsection{Algorithm}\label{sec:algorithmitself}
In this section, we will assume that $\K(x)$ satisfies conditions \eqref{eq:Kx} and, furthermore, that $\K$ is computable, that is, one can effectively carry out the field operations and effectively apply the derivations.  We will describe an algorithm for calculating the maximal reductive quotient $G/\Ru(G)$ of the PPV- Galois group $G$ of any $\partial Y= AY, \ A \in \GL_n(\K(x))$ and an algorithm to decide if $G$ is reductive, that is, if $G$ equals this maximal reductive quotient. 

\subsubsection{Ancillary Algorithms.}  We begin by describing algorithms to solve the following problems which arise in our two main algorithms. 

\begin{algorithm}\label{alg:A}{\it Let $K$  be a computable algebraically closed field and $H \subset \GL_n(K)$ be a reductive LAG  defined over $K$. Given the defining equations for $H$, find defining equations for $H^\circ$ and $Z(H^\circ)$ as well as defining equations for normal simple algebraic groups $H_1, \ldots, H_\ell$ of $H^\circ$ such that the homomorphism $$\pi:H_1 \times \ldots \times H_\ell  \times Z(H^\circ) \rightarrow H^\circ$$ is surjective with a finite kernel.}  \cite{EisHunVas} gives algorithms for finding Gr\"obner bases of the radical of a polynomial ideal and of the prime ideals appearing in a minimal decomposition of this ideal.  Therefore, one can find the defining equations of $H^\circ$. Elimination properties of Gr\"obner bases allow one to compute $$Z(H^\circ) =\big\{h\in H^\circ \ | \ ghg^{-1} = h \mbox{ for all } g \in H^\circ\big\}.$$ We may write $H^\circ = S\cdot Z(H^\circ)$ where $S=[H^\circ,H^\circ]$  is semisimple. A theorem of Ree~\cite{Ree} states that every element of a connected semisimple algebraic group is a commutator, so $$S = \big\{[h_1,h_2] \:|\: h_1,h_2 \in H^\circ\big\}.$$ Using the elimination property of Gr\"obner bases, we see that one can compute defining equations for $S$. We know that $S = H_1\cdot\ldots\cdot H_\ell$ for some simple algebraic groups $H_i$.  We now will  find the $H_i$. 
Given the defining ideal $J$ of $S$,  the Lie algebra $\is$ of $S$  is $$\big\{s \in \Mn_n(K)\ | \ f(I_n +  \epsilon s) = 0 \mod \epsilon^2 \mbox{ for all } f \in J\big\},$$ where $\epsilon$ is a new variable. This $K$-linear space is also computable via Gr\"obner bases techniques. In \cite[Section~1.15]{deGraaf}, one finds algorithms to decide if $\is$ is simple and, if not, how to decompose $\is$ into a direct sum of simple ideals $\is = \is_1 \oplus \ldots \oplus \is_\ell$.  Note that each $\is_i$ is the tangent space of a normal simple algebraic subgroup $H_i$ of $S$ and $S = H_1\cdot\ldots\cdot H_\ell$. Furthermore, $H_1$ is the identity component of $$\{ h \in S \ | \ {\rm Ad}(h)(\is_2\oplus \ldots \oplus \is_\ell) = 0\},$$ and this can be computed via Gr\"obner bases methods.  Let $S_1$ be the identity component of $$\{ h \in S \ | \ {\rm Ad}(h)(\is_1) = 0\}.$$ We have $S = H_1\cdot S_1$, and we can proceed by induction to determine $H_2, \ldots, H_\ell$ such that $S_1 = H_2\cdot\ldots\cdot H_\ell$.  The groups $Z(H^\circ)$ and $H_1, \ldots, H_\ell$ are what we desire.
\end{algorithm}
 \begin{algorithm}\label{alg:B} {\it Given $A \in \Mn_n(\K(x))$, find defining equations for the PV-Galois
group $H \subset \GL_n(\K)$ of the differential equation $\partial Y = AY$. When $H$ is finite, construct the PV-extension associated with this equation.} A general algorithm to compute PV-Galois groups is given by Hrushovski \cite{Hrushovski}.  When $H$ is assumed to be reductive, an algorithm is given in~\cite{CoSi97b}. An algorithm to find all algebraic solutions of a differential equation is classical (due to Painlev\'e and Boulanger) and is described in \cite{singer_alg, singer_liouvsoln}.
\end{algorithm}
\begin{algorithm}\label{alg:C}
  {\it Given  $A \in \Mn_n(\K(x))$ and the fact that the PPV-Galois group $G$  of the differential equation $\partial Y = AY$ satisfies $\tau(G) \Le 0$, find the defining equations of $G$. } An algorithm to compute this is given in \cite[Algorithm~1]{MiOvSi}.
  \end{algorithm}
\begin{algorithm}\label{alg:D} {\it Assume that we are given an algebraic extension $F$ of $\K(x)$, a matrix $A \in \Mn_n(F)$, the defining equations for the PV-Galois group $G$ of the equation $\partial Y = AY$ over $F$ and the defining equations for a normal algebraic  subgroup $H$ of $G$.  Find an integer $\ell$, a faithful representation $\rho:G/H \rightarrow \GL_\ell(\K)$ and a  matrix $B \in \Mn_\ell(F)$ such that the equation $\partial Y = BY$ has PV-Galois group $\rho(G/H)$. }  

The usual proof (\cite[Section~11.5]{Humphreys}) that there 
exists an $\ell$ and a faithful rational representation  $\rho: G/H \rightarrow \GL_\ell(\K)$ is constructive; that is, if  $V \simeq \K^n$ is a faithful $G$-module and we are given the defining equations for $G$ and $H$, then, using direct sums, subquotients, duals, and tensor products, one can construct a $G$-module $W\simeq \K^\ell$ such that the map $\rho:G\rightarrow \GL_\ell(\K)$ has kernel $H$. 

 Let $M$ be the differential module associated with $\partial Y = AY$. Applying the same constructions to $M$ yields a differential module $N$.  The Tannakian correspondence implies that the action of $G$ on the associated solution space is (conjugate to) $\rho(G)$.
\end{algorithm}

\begin{algorithm}\label{alg:E} {\it Assume that we are given $F$, an algebraic extension of $\K(x)$, and  $A \in  \Mn_n(F)$, and $B_1, \ldots , B_\ell \in F^n$. Let 
\begin{eqnarray*}\label{eq:param}
W & = & \big\{(Z, c_1, \ldots , c_\ell ) \ | \ Z \in F^n, c_1, \ldots , c_\ell \in \K\ \ \text{\rm and}\ \ \partial Z + AZ = c_1B_1 + \ldots + c_\ell B_\ell \big\}.
\end{eqnarray*}
Find a $\K$-basis of $W$.}   Let $F[\partial]$ be the ring of differential operators with coefficients in $F$. Let $$C = I_n\partial + A\in \Mn_n(F[\partial]).$$ We may write $\partial Z + AZ = c_1B_1 + \ldots + c_\ell B_\ell $ as $$CZ = c_1B_1 + \ldots + c_\ell B_\ell.$$ Since $ F[\partial]$ has a left and right division algorithm (\cite[Section~2.1]{Michael}), one can row and column reduce the matrix $C$, that is, find  a left invertible matrix $U$ and a right invertible matrix $V$ such that $UCV = D$ is a diagonal matrix.  We then have that $(Z, c_1, \ldots ,c_\ell) \in W$ if and only if $X = (V^{-1}Z, c_1, \ldots , c_\ell)$ satisfies $$DX = c_1UB_1 + \ldots + c_\ell UB_\ell.$$  Since $D$ is diagonal, this is equivalent to finding bases of scalar parameterized equations $$Ly = c_1b_1+\ldots +c_\ell b_\ell, \quad L\in F[\partial],\ b_i \in K.$$  \cite[Proposition~3.1 and Lemma~3.2]{singer_liou} give a method to solve this latter problem.   We note that, if $A \in \K(x)$ and $\ell = 1$, an algorithm for finding solutions with entries in $\K(x)$ directly without having to diagonalize  is given in~\cite{barkatou}.
\end{algorithm}
\begin{algorithm}\label{alg:F} {\it Let $A \in \Mn_n(\K(x))$ and let $M$ be the differential module associated with $\partial Y = AY$. Find a basis of $M$ so that the associated differential equation $\partial Y = BY, \ B \in \Mn_n(\K(x))$, is as in~\eqref{eq:flageqn}, that is, in block upper triangular form with the blocks on the diagonal corresponding to irreducible modules.}  We are asking to ``factor'' the system $\partial Y = AY$.  Using cyclic vectors, one can reduce this problem to factoring linear operators of order $n$, for which there are many algorithms (cf. \cite[Section~4.2]{Michael}). A direct method is also given in~\cite{grigoriev90b}.
\end{algorithm}

\begin{algorithm}\label{alg:G} {\it Suppose that we are given $F$, an algebraic extension of $\K(x)$,  $A \in  \Mn_n(F)$, and the defining equations of the  PV-Galois group $H$ of $\partial Y = AY$.  Assuming that $H$ is  a simple LAG, find the PPV-Galois $G$ group of  $\partial Y = AY$.  }   Let $\calD$ be the $\K$-span of $\Delta$. A Lie $\K$-subspace $\calE$ of  $\calD$ is a $\K$-subspace such that,  if $D,D' \in \calE$, then $$[D,D'] = DD'-D'D \in \calE.$$ We know that the group $G$ is a Zariski-dense subgroup of $H$. The Corollary to \cite[Theorem~17]{CassidyClassification} states that there is a Lie $\K$-subspace $\calE \subset \calD$ such that $G$ is conjugate to  $H\big(\K^{\calE}\big)$. Therefore, to describe $G$, it suffices to find $\calE$.  Let
$$
W = \big\{(Z, c_1, \ldots , c_m ) \ | \ Z \in \Mn_n(F) =F^{n^2},\, c_1, \ldots , c_m \in \K\ \mbox{ and }\ \partial Z + [Z,A] = c_1\partial_1 A + \ldots + c_m \partial_m A \big\}.
$$
The algorithm described in~\ref{alg:E} allows us to calculate $W$.  We claim that we can take
\begin{eqnarray}\label{cale}
\calE = \big\{ c_1\partial_1 + \ldots + c_m \partial_m \ | \mbox{ there exists } Z\in \GL_n(F) \mbox{ such that } (Z, c_1, \ldots , c_m ) \in W\big\}.
\end{eqnarray}
Note that this $\calE$ is a Lie $\K$-subspace of $\calD$.  To see this, it suffices to show that, if $D_1, D_2 \in \calE$, then $[D_1,D_2] \in \calE$.  If
 $$ \partial B_1 + [B_1,A] = D_1A\quad\text{and}\quad  \partial B_2 + [B_2,A] = D_2A  \ \ \ \mbox{ for  some }\ B_1,B_2 \in \GL_n(F),$$
 then a calculation shows that
 $$ \partial B + [B,A] = [D_1,D_2]A,\quad\text{where}\quad B = D_1B_2 - D_2B_1 - [B_1,B_2].$$ 
In particular, \cite[Section~0.5, Propostions~6 and~7]{KolDAG} imply that $\calE$ has a $\K$-basis of {\it commuting} derivations $\big\{\dbar_1, \ldots , \dbar_t\big\}$ that extends to a basis of commuting derivations $\big\{\dbar_1, \ldots , \dbar_m\big\}$ of $\calD$.

To show that $G$ is conjugate to $H\big(\K^{\calE}\big)$ we shall need  the following concepts and results.  Let $\deltabar' = \big\{\dbar, \dbar_1, \ldots, \dbar_m\big\}$ and $k$ be a $\deltabar'$-field. Let $\deltabar = \big\{ \dbar_1, \ldots, \dbar_m\big\}$ and $\Sigma \subset \deltabar$. Assume that $ C= k^\dbar$ is differentially closed.
\begin{definition} Let $A\in \Mn(k)$. We say $\dbar Y = AY$ is {\it integrable with respect to $\Sigma$} if, for all $\dbar_i \in \Sigma$, there exists $A_i \in \Mn_n(k)$ such that 
\begin{eqnarray}
\dbar A_j - \dbar_j A& = &[A,A_j]\ \ \text{for all}\ \ \dbar_j\in \Sigma\  \mbox{ and,  }\label{eq:com1}\\
 \dbar_i A_j - \dbar_jA_i &=& [A_i,A_j]\ \ \text{for all}\ \ \dbar_i, \dbar_j \in \Sigma \label{eq:com2}
\end{eqnarray}
\end{definition} 

The following characterizes integrability in terms of the behavior of the PPV-Galois group.
\begin{proposition} \label{prop:int} Let $K$ be the PPV-extension of $k$ for $\dbar Y = AY$ and let $G \subset \GL_n(C)$ be the PPV-Galois group. The group $G$ is conjugate to a subgroup of $\GL_n\big(C^\Sigma\big)$ if and only if $\dbar Y = AY$ is integrable with respect to $\Sigma$.\end{proposition}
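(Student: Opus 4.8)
The plan is to translate everything into a statement about a fundamental solution matrix and the Galois cocycle it defines, and then to prove the two implications separately. Throughout I will use two elementary facts. First, any two matrices in $\GL_n(K)$ solving $\dbar Y=AY$ differ by right multiplication by an element of $\GL_n(K^{\dbar})=\GL_n(C)$. Second, if $Z\in\GL_n(K)$ is a fundamental solution matrix of $\dbar Y=AY$ and $D\in\GL_n(C)$, then $ZD$ is again such a matrix and, since $D$ has entries in $C\subseteq k$, for every $\sigma\in G$ we have $[\sigma]_{ZD}=D^{-1}[\sigma]_Z D$. In particular, ``$G$ is conjugate to a subgroup of $\GL_n(C^{\Sigma})$'' is equivalent to ``for some fundamental solution matrix $Z\in\GL_n(K)$ one has $\dbar_i[\sigma]_Z=0$ for all $\sigma\in G$ and all $\dbar_i\in\Sigma$.''

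For the implication ``conjugate into $\GL_n(C^{\Sigma})$ $\Longrightarrow$ integrable'': pick, as above, a fundamental solution matrix $Z$ with $\dbar_i[\sigma]_Z=0$ for $\dbar_i\in\Sigma$ (replacing $Z$ by $ZD$, $D\in\GL_n(C)$, does not change $A=(\dbar Z)Z^{-1}$), and set $A_i:=(\dbar_i Z)Z^{-1}\in\Mn_n(K)$. A one-line computation using $\dbar_i\circ\sigma=\sigma\circ\dbar_i$ and $\dbar_i[\sigma]_Z=0$ shows $\sigma(A_i)=A_i$ for all $\sigma\in G$, so by the parameterized Galois correspondence $K^{G}=k$ \cite{PhyllisMichael} we get $A_i\in\Mn_n(k)$. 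Condition~\eqref{eq:com1} then follows by differentiating $\dbar Z=AZ$ along $\dbar_i$ and substituting into $\dbar A_i=\dbar\big((\dbar_i Z)Z^{-1}\big)$; condition~\eqref{eq:com2} follows from $\dbar_i\dbar_j Z=\dbar_j\dbar_i Z$ in the same way. Both are short bracket identities.

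For the converse, assume $\dbar Y=AY$ is integrable with respect to $\Sigma$ with matrices $A_i$ as in the definition, and fix any fundamental solution matrix $Z\in\GL_n(K)$. I would look for $D\in\GL_n(C)$ such that $\widetilde Z:=ZD$ is horizontal for all of $\{\dbar\}\cup\Sigma$, i.e.\ $\dbar_i\widetilde Z=A_i\widetilde Z$ for $\dbar_i\in\Sigma$. Expanding, this amounts to solving the auxiliary system $\dbar_i D=M_iD$, $\dbar_i\in\Sigma$, where $M_i:=Z^{-1}A_iZ-Z^{-1}\dbar_iZ$. Using~\eqref{eq:com1} one checks that $\dbar M_i=0$, so $M_i\in\Mn_n(C)$, and using~\eqref{eq:com2} one checks the compatibility relations $\dbar_iM_j-\dbar_jM_i=[M_i,M_j]$ for $\dbar_i,\dbar_j\in\Sigma$; since $C$ is $\deltabar$-differentially closed (hence $\Sigma$-differentially closed), the auxiliary system then has a solution $D\in\GL_n(C)$. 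With this $D$, applying $\sigma\in G$ to $\dbar_i\widetilde Z=A_i\widetilde Z$ and arguing as in the previous paragraph gives $\widetilde Z\,\dbar_i[\sigma]_{\widetilde Z}=0$, hence $[\sigma]_{\widetilde Z}\in\GL_n(C^{\Sigma})$; by the second fact above this means $G$ is conjugate inside $\GL_n(C)$ to a subgroup of $\GL_n(C^{\Sigma})$.

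The routine-but-central computation is the verification that $\{M_i\}$ satisfies $\dbar M_i=0$ and $\dbar_iM_j-\dbar_jM_i=[M_i,M_j]$, i.e.\ that the integrability conditions on $\{A_i\}$ are exactly what is needed to promote horizontality from the base equation to a solution matrix lying in the already existing PPV-extension $K$; everything after that is cocycle bookkeeping. An alternative to the explicit construction of $D$ is to invoke the existence of a PPV-extension $\widetilde K$ of $k$ for the enlarged integrable system $\{\dbar Y=AY\}\cup\{\dbar_i Y=A_i Y:\dbar_i\in\Sigma\}$ over the field $k$ with its $\deltabar$-differentially closed $\dbar$-constants \cite{GGO}, to observe that $\widetilde K$ is generated over $k$ as a $\deltabar'$-field by its fundamental matrix $\widetilde Z$ and is therefore itself a PPV-extension of $k$ for $\dbar Y=AY$, and then to identify $\widetilde K$ with $K$ by uniqueness \cite[Proposition~9.6]{PhyllisMichael}; the argument concludes exactly as above. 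The anticipated difficulty is purely in keeping the bookkeeping of these conjugations and derivations straight; no new ideas beyond the integrability identities are required.
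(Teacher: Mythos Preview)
Your argument is correct. The forward direction is essentially identical to the paper's: both pick a fundamental matrix whose Galois cocycle already lands in $\GL_n(C^\Sigma)$, set $A_i=(\dbar_i Z)Z^{-1}$, invoke the Galois correspondence to put $A_i$ in $\Mn_n(k)$, and read off \eqref{eq:com1}--\eqref{eq:com2} from commutativity of the derivations.

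For the backward direction your primary route differs from the paper's. The paper does not construct the conjugating matrix directly; instead it builds the PPV-extension $R$ for the full integrable system $\{\dbar Y=AY\}\cup\{\dbar_i Y=A_iY:\dbar_i\in\Sigma\}$, observes that $R$ is simultaneously a PPV-ring for the single equation $\dbar Y=AY$, notes that the cocycle for this common fundamental matrix lies in $\GL_n(C^\Sigma)$ by construction, and then invokes uniqueness of PPV-extensions (using that $C^\Sigma$ is differentially closed) to transport this back to the given $K$. Your approach stays inside the given $K$ from the start: you gauge-transform the $A_i$ by the existing $Z$ to get $M_i\in\Mn_n(C)$, verify that the $M_i$ form a compatible $\Sigma$-system over $C$, and solve $\dbar_i D=M_iD$ directly in $\GL_n(C)$. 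This is more explicit and avoids the detour through a second PPV-extension; on the other hand, the step ``$C$ is $\deltabar$-differentially closed, hence $\Sigma$-differentially closed'' is not a triviality---the paper cites \cite{Omar} for exactly this point in its second argument---so you should flag that reference. Your alternative paragraph (build the PPV-extension for the enlarged system, then identify via uniqueness) is precisely the paper's main argument.
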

\begin{proof} Assume that $G$ is conjugate to a subgroup of $\GL_n\big(C^\Sigma\big)$ and let $B \in \GL_n(C)$ satisfy $$BGB^{-1} \subset \GL_n\big(C^\Sigma\big).$$ Let $Z \in \GL_n(K)$ satisfy $\dbar Z = AZ$ and  $W = ZB^{-1}$.  For any $V \in \GL_n(K)$ such that $\dbar V = AV$ and $\sigma \in G$, we will denote by $[\sigma]_V$ the matrix in $\GL_n(C)$ such that $\sigma(V) = V{[\sigma]}_V$.  We have 
$$  \sigma(W) = Z{[\sigma]}_Z B^{-1} = ZB^{-1}B {[\sigma]}_Z B^{-1} = W{[\sigma]}_W,$$
so $${[\sigma]}_W  = B{[\sigma]}_ZB^{-1} \in \GL_n\big(C^\Sigma\big).$$  A calculation shows that $A_i := \dbar_iW\cdot W^{-1}$ is left fixed by all $\sigma \in G$ and so lies in $\Mn_n(k)$.  Since the $\dbar_i$ commute with $\dbar$ and each other, we have that the $A_i$ satisfy \eqref{eq:com1} and  \eqref{eq:com2}.
 
Now assume that  $\dbar Y = AY$ is integrable with respect to $\Sigma$ and, for convenience of notation, let $\Sigma = \big\{\dbar_1, \ldots , \dbar_t\big\}$. We first note that since $C$ is differentially closed with respect to $\Delta$, the field $C^\Sigma$ is differentially closed with respect to $\Pi =   \big\{\dbar_{t+1}, \ldots , \dbar_m\big\}$ (in fact, $C^\Sigma$ is also differentially closed with respect to $\Delta$, see~\cite{Omar}). Note that $C^\Sigma = k^{\{\dbar\}\cup\Sigma}$. Let $$R = k\{Z, 1/(\det Z)\}_{\deltabar'}$$ be the PPV-extension ring of $k$  for the integrable system
\begin{eqnarray}\label{eq:intsys}
\dbar Y & = & AY  \label{eq:intsys1}\\
\dbar_i  Y & = & A_iY , i= 1, \ldots t.\label{eq:intsys2}
\end{eqnarray}
The ring $R$ is a $\deltabar'$-simple ring generated both as a $\Pi$-differential ring and as a $\deltabar$-differential ring by the entries of $Z$ and $1/\det Z$.  Therefore, $R$ is also the PPV-ring for the single equation \eqref{eq:intsys1}, (\cite[Definition 6.10]{CharlotteMichael}). 

Let $L$ be the quotient field of $R$. The group $G$ of $\deltabar'$-automorphisms of $L$ over $k$ is both the PPV-group of  the system \eqref{eq:intsys1} \eqref{eq:intsys2} and of the single equation \eqref{eq:intsys1}. In the first case, we see that the matrix representation of this group with respect to $Z$ lies in $\GL_n\big(C^\Sigma\big)$ and therefore the same is true in the second case. Since $C^\Sigma$ is differentially closed,  the PPV-extension $K = k\langle U\rangle$ is $k$-isomorphic to $L$ as $\deltabar'$-fields.  This isomorphism will take $U$ to $ZD$ for some $D \in \GL_n(C)$ and so the matrix representation of the  PPV-group of $K$ over $k$ will be conjugate to a subgroup of $\GL_n\big(C^\Sigma\big)$.

One can also argue as follows. First note that $C$ is also $\Sigma$-differentialy closed by \cite{Omar}.
For every $\deltabar$-LDAG $G' \subset \GL_n(C)$ with defining ideal $$I \subset C\{X_{ij},1/\det\}_\deltabar,$$ let  $G'_\Sigma$ denote the $\Sigma$-LDAG with defining ideal $$J:=I\cap C\{X_{ij},1/\det\}_\Sigma.$$ Then $G'$ is conjugate to $\Sigma$-constants if and only if $G'_\Sigma$ is. Indeed, the former is equivalent to the existence of $D \in \GL_n(C)$ such that, for all $i$,$j$, $1\Le i,j\Le n$ and $\partial\in\Sigma$, we have $\partial\big(DX_{ij}D^{-1}\big)_{ij} \in I$, which holds if and only if $\partial\big(DXD^{-1}\big)_{ij} \in J$.

Let $K = k\langle Z\rangle_{\deltabar'}$.  
The $\Sigma$-field $K_{\Sigma} := k\langle Z\rangle_{\{\dbar\}\cup\Sigma}$ is a $\Sigma$-PPV extension for $\dbar Y = AY$ by definition. As in \cite[Proposition~3.6]{PhyllisMichael}, one sees that $G_\Sigma$ is its $\Sigma$-PPV Galois group. Finally, $G_\Sigma$ is conjugate to $\Sigma$-constants if and only if $\dbar Y = AY$ is integrable with respect to $\Sigma$ by \cite[Proposition~3.9]{PhyllisMichael}.
\end{proof}

\begin{corollary}\label{cor:533} Let $K$ be the PPV-extension of $k$ for $\dbar Y = AY$ and $G \subset \GL_n(C)$ be the PPV-Galois group. Then $G$ is conjugate to a subgroup of $\GL_n\big(C^\Sigma\big)$ if and only if, for every $\dbar_i \in \Sigma$, there exists $A_i \in  \Mn_n(k)$ such that 
$\dbar A_j +[A_j,A] =  \dbar_j A $.\end{corollary}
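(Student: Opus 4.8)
The plan is to reduce the corollary to Proposition~\ref{prop:int}. That proposition identifies ``$G$ is conjugate to a subgroup of $\GL_n\big(C^\Sigma\big)$'' with integrability of $\dbar Y=AY$ with respect to $\Sigma$, i.e. with the existence of $A_i\in\Mn_n(k)$, $\dbar_i\in\Sigma$, satisfying \emph{both} \eqref{eq:com1} (equivalently $\dbar A_i+[A_i,A]=\dbar_i A$) and the compatibility relations \eqref{eq:com2}. One implication of the corollary is therefore immediate. For the converse I would show that a family $\{A_i\}$ satisfying only \eqref{eq:com1} can be modified inside $\Mn_n(k)$, without losing \eqref{eq:com1}, so that \eqref{eq:com2} holds as well; Proposition~\ref{prop:int} then finishes the argument. (Conceptually, \eqref{eq:com1} for $\dbar_i\in\Sigma$ is the statement that the prolongation $P_i(M)$ of the differential module $M$ attached to $\dbar Y=AY$ splits as $M\oplus M$; cf.~\eqref{eq:es}. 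On the Galois side this says exactly that the $\dbar_i$-derivative of the cocycle $\sigma\mapsto[\sigma]_Z$ can be absorbed by conjugation.)

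First I would fix a fundamental solution matrix $Z\in\GL_n(K)$, $\dbar Z=AZ$, in the PPV-extension $K$. For a given $\dbar_i\in\Sigma$, \eqref{eq:com1} is equivalent to $V_i:=\dbar_iZ-A_iZ$ being a solution of $\dbar Y=AY$, so that $D_i:=Z^{-1}V_i\in\Mn_n(C)$ and $\dbar_iZ=A_iZ+ZD_i$. Applying $\sigma\in G$ — which fixes $k$, hence $C=k^{\dbar}$, and sends $Z$ to $Z[\sigma]_Z$ — gives $\dbar_i[\sigma]_Z=[[\sigma]_Z,D_i]$. Comparing the two ways of computing $\dbar_i\dbar_jZ$ then shows that $\{A_i\}$ satisfies \eqref{eq:com2} if and only if the matrices $\{D_i\}$ form an integrable family over the $\Sigma$-differential field $C$, namely $\dbar_iD_j-\dbar_jD_i+[D_i,D_j]=0$ for all $\dbar_i,\dbar_j\in\Sigma$.

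Next I would analyse the freedom in the choice of the $A_i$: two families satisfying \eqref{eq:com1} for the fixed $A$ differ by $B_i\in\Mn_n(k)$ with $\dbar B_i=[A,B_i]$, and such $B_i$ are precisely the matrices $ZM_iZ^{-1}$ with $M_i$ ranging over the centralizer of $G$ in $\Mn_n(C)$ (this uses the Galois correspondence $K^G=k$); passing from $A_i$ to $A_i+B_i$ replaces $D_i$ by $D_i-M_i$. Differentiating $\dbar_i[\sigma]_Z=[[\sigma]_Z,D_i]$ once more and invoking the Jacobi identity shows that the curvature $S_{ij}:=\dbar_iD_j-\dbar_jD_i+[D_i,D_j]$ commutes with $G$ and satisfies a Bianchi-type identity. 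The task is then to choose $M_i$ in the centralizer of $G$ so that the modified matrices $D_i-M_i$ have vanishing curvature; this amounts to solving a linear $\Sigma$-differential system for the $M_i$ over $C$, which is possible because $C$ is $\Sigma$-differentially closed (as recalled in the proof of Proposition~\ref{prop:int}, via~\cite{Omar}). Having arranged \eqref{eq:com1} and \eqref{eq:com2} simultaneously, the corollary follows from Proposition~\ref{prop:int}.

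The main obstacle is precisely this curvature-killing step: one must verify that the obstruction $\{S_{ij}\}$, which a priori only takes values in the centralizer of $G$, can actually be trivialized by an admissible modification $D_i\mapsto D_i-M_i$, and this is where both the fact that $S_{ij}$ centralizes $G$ and the $\Sigma$-differential closedness of $C$ are essential. The remaining ingredients — the identity $\dbar_i[\sigma]_Z=[[\sigma]_Z,D_i]$, the reduction of \eqref{eq:com2} to integrability of $\{D_i\}$, and the Galois correspondence — are routine.
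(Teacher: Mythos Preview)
Your approach is quite different from the paper's, and the gap you yourself flag is real.

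The paper's proof is two lines: it invokes \cite[Theorem~4.4]{GO}, which says that $G$ is conjugate into $\GL_n\big(C^\Sigma\big)$ if and only if, for \emph{each} $\dbar_i\in\Sigma$ separately, $G$ is conjugate into $\GL_n\big(C^{\dbar_i}\big)$. One then applies Proposition~\ref{prop:int} to each singleton $\{\dbar_i\}$, where condition~\eqref{eq:com2} is vacuous and only~\eqref{eq:com1} survives --- exactly the condition in the corollary. So the whole content of the corollary over Proposition~\ref{prop:int} is outsourced to the cited result.

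Your route instead tries to upgrade a family $\{A_i\}$ satisfying only~\eqref{eq:com1} to one satisfying both~\eqref{eq:com1} and~\eqref{eq:com2}, i.e.\ to kill the curvature $S_{ij}=\dbar_iD_j-\dbar_jD_i+[D_i,D_j]$ by a modification $D_i\mapsto D_i-M_i$ with $M_i$ in the centralizer of $G$. The problem is your claim that this ``amounts to solving a linear $\Sigma$-differential system.'' Writing out the modified curvature gives
\[
S'_{ij}=S_{ij}-\big(\dbar_iM_j-\dbar_jM_i\big)-[D_i,M_j]-[M_i,D_j]+[M_i,M_j],
\]
which is \emph{not} linear in the $M_i$ because of the $[M_i,M_j]$ term (and the cross terms $[D_i,M_j]$ need not even lie in the centralizer in general). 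So differential closedness of $C$ does not immediately dispose of it as you assert. The argument can be salvaged by induction on $|\Sigma|$: conjugate by $E$ with $E^{-1}\dbar_1E=D_1$ to force $D_1'=0$, observe that then $G'\subset\GL_n\big(C^{\dbar_1}\big)$ so its centralizer is defined over $C^{\dbar_1}$ and is $\dbar_1$-stable with surjective $\dbar_1$, and iterate inside $C^{\dbar_1}$. But that inductive argument is essentially a reproof of \cite[Theorem~4.4]{GO}, not the short step you describe.

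In summary: one direction is fine; for the other, either cite the reduction-to-singletons result as the paper does, or replace the ``linear system'' claim by a genuine inductive conjugation argument.
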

\begin{proof} In \cite[Theorem~4.4]{GO}, the authors show that $G$ is conjugate to a subgroup of $\GL_n\big(C^\Sigma\big)$ if and only if for each $\dbar_i \in \Sigma$, $G$ is conjugate to a subgroup of $\GL_n\big(C^{\dbar_i}\big)$.  Two applications of Proposition~\ref{prop:int} yields the conclusion.\end{proof}

Applying Corollary~\ref{cor:533} to $\dbar=\partial$ and the commuting basis $\Sigma = \big\{\dbar_1, \ldots , \dbar_t\big\}$ of $\calE$, implies that $G$ is conjugate to $H\big(\K^\calE\big)$. 
\end{algorithm}

Sections~\ref{sec:alg1} and~\ref{sec:alg2} now present the two algorithms described in the introduction.

\subsubsection{An algorithm to compute the  maximal reductive quotient $G/\Ru(G)$ of a PPV-Galois group $G$.} \label{sec:alg1}Assume that we are given a matrix $A \in \Mn_n(\K)$. Let $H$ be the PV-Galois group of this equation. We proceed as follows taking into account the following general principle. For every normal algebraic subgroup $H'$ of $H$ and $B \in \Mn_\ell(\K)$, if $H/H'$ is the PV-Galois group of $\partial Y=BY$, then $G/(G\cap H')$ is its PPV-Galois group, which follows from~\ref{alg:D}.

\begin{step}{\it Reduce to the case where $H$ is reductive.} Using~\ref{alg:F},  we find an equivalent differential equation as in \eqref{eq:flageqn} whose matrix is in block upper triangular form where the modules corresponding to the diagonal blocks are irreducible.  We now consider the block diagonal Equation \eqref{eq:diageqn}.  This latter equation has PPV-Galois group $G/\Ru(G)$.
\end{step}
\begin{step} {\it Reduce to the case where $G$ is connected and semisimple.}  We will show that it is sufficient to be able to compute the PPV-Galois group of an equation $\partial Y = \overline{A} Y$assuming $\overline{A}$ has entries in an algebraic extension of $\K(x)$, assuming  we have the defining equations of the PV-Galois group of $\partial Y = \overline{A} Y$ and  assuming this PV-Galois group is connected and semisimple. 

Using~\ref{alg:B}, we compute the defining equations of the  PV-Galois group $H$ of $\partial Y = AY$ over $\K(x)$.  Using~\ref{alg:A}, we calculate the defining equations for $H^\circ$ and $Z\big(H^\circ\big)$ as well as defining equations for normal simple algebraic groups $H_1, \ldots, H_\ell$ of $H^\circ$ as in~\ref{alg:A}.  Note that $$H^\circ = S_H\cdot Z\big(H^\circ\big),$$ where $S_H = H_1\cdot\ldots\cdot H_\ell$ is the commutator subgroup of $H^\circ$.  Note that $$S_G = \big[G^\circ,G^\circ\big]$$ is Zariski-dense in $S_H$.  Using~\ref{alg:D}, we construct a differential equation $\partial Y = BY$ whose PV-Galois group is $H/H^\circ$.  This latter group is finite, so this equation has only algebraic solutions, and, again using~\ref{alg:B}, we can construct a finite extension $F$ of $\K(x)$ that is the PV-extension corresponding to $\partial Y = BY$. The PV-Galois group of $\partial Y = AY$ over $F$ is $H^\circ$. 

Since we have the defining equations of $Z(H^\circ)$,~\ref{alg:D} allows us to construct a representation $$\rho:H^\circ \rightarrow H^\circ/Z\big(H^\circ\big)$$ and a   differential equation $\partial Y = \overline{B}Y$, $\overline{B}$ having entries in $F$, whose     PV-Galois group is $\rho(H^\circ)$. Note that $\rho\big(G^\circ\big)$ is the PPV-Galois group of $\partial Y = \overline{B}Y$ and is Kolchin-dense in $\rho\big(H^\circ\big)$.  Therefore, $\rho\big(G^\circ\big)$ is connected and semisimple. Let us assume that we can find defining equations of  $\rho(G^\circ)$. We can therefore compute defining equations of $\rho^{-1}\big(\rho\big(G^\circ\big)\big)$.  The group $$\rho^{-1}\big(\rho\big(G^\circ\big)\big) \cap S_H$$ normalizes $\big[G^\circ,G^\circ\big]$ in $S_H$. By Lemma~\ref{lem:normal}, we have $$\rho^{-1}\big(\rho\big(G^\circ\big)\big) \cap S_H = S_G .$$  Therefore, we can compute the defining equations of $S_G$.  

To compute the defining equations of $G$, we proceed as follows.   Using~\ref{alg:D}, we compute a differential equation $\partial Y = \widetilde{B}Y$, $\widetilde{B}$ having entries in $\K(x)$, whose PV-group is $H/S_H$. The PPV-Galois group of this equation is $L = G/S_G$.  By Lemma~\ref{lem4}, this group has differential type at most $0$, so~\ref{alg:C} implies that we can find the defining equations of $L$.  Let $$\widetilde{\rho}:H \rightarrow H/S_H.$$ We claim that $$G =  \widetilde{\rho}^{-1}(L) \cap N_H\big(S_G\big).$$  Clearly, $$G \subset \widetilde{\rho}^{-1}(L) \cap N_H\big(S_G\big).$$ Now let $$h \in \widetilde{\rho}^{-1}(L) \cap N_H\big(S_G\big).$$  We can write $h = h_0g$ where $g \in G$ and $h_0 \in S_H$. Furthermore, $h_0$ normalizes $S_G$.  Lemma~\ref{lem:normal} implies that $h_0 \in S_G$ and so $h\in G$.  Since we can compute the defining equations of $S_G$, we can compute the defining equations of $N_H(S_G)$. Since we can compute $\widetilde{\rho}$ and the defining equations of $L$, we can compute the defining equations of  $\widetilde{\rho}^{-1}(L)$, and so we get the defining equations of $G$.
All that remains is to prove the following lemma.
\begin{lemma}\label{lem:normal} Let $G$ be a Zariski-dense differential subgroup of a semisimple linear algebraic group $H$. Then
\begin{enumerate}
\item $Z(H) \subset G$, and
\item $N_H(G) = G.$
\end{enumerate}
\end{lemma}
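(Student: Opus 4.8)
The plan is to reduce everything to three soft inputs: Corollary~\ref{cor:ConnectedGH} (an LDAG is Kolchin-connected exactly when its Zariski closure is connected); the elementary fact that a connected LDAG has no nontrivial finite quotient, since a finite, Kolchin-irreducible group is a single point; and, for part~(ii), Proposition~\ref{prop:QuasiIsom} on the behaviour of the $W$-filtration under homomorphisms of reductive LDAGs. Since $H$ is semisimple it is connected and $Z(H)$ is finite, and by hypothesis $\overline{G}=H$.

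For part~(i), set $G':=G\cdot Z(H)$. As $Z(H)$ is central and finite, $G'$ is a subgroup of $H$ normalizing $G$, and it is Kolchin-closed, being a finite union of translates $Gz$, $z\in Z(H)$, of the Kolchin-closed set $G$. Its Zariski closure is $\overline{G}\cdot Z(H)=H$, so $G'$ is Kolchin-connected by Corollary~\ref{cor:ConnectedGH}. Hence the quotient LDAG $G'/G$ is connected; since it is also finite (its order divides $|Z(H)|$), it is trivial. Therefore $G'=G$, i.e.\ $Z(H)\subset G$.

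For part~(ii), put $N:=N_H(G)$, which is a Kolchin-closed subgroup of $H$ with $G\trianglelefteq N$. From $\overline{N}\supseteq\overline{G}=H$ we get $\overline{N}=H$, and then $N$ is reductive, since $\Ru(N)=\Ru(\overline{N})\cap N=\Ru(H)\cap N=\{\id\}$ by Remark~\ref{rem:GbarG}; the quotient $Q:=N/G$ is reductive as well (quotients of reductive LDAGs are reductive). Let $\pi\colon N\to Q$ be the quotient epimorphism. On coordinate rings $\pi^*\colon\K\{Q\}\hookrightarrow\K\{N\}$ is injective with image the $G$-invariants $\K\{N\}^{G}$, and by Proposition~\ref{prop:QuasiIsom} it carries $\K\{Q\}_0$ into $\K\{N\}_0$. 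But $\K\{N\}_0=\K[\overline{N}]=\K[H]$, and the only $G$-invariant regular functions on $H$ are the constants: if $f\in\K[H]$ satisfies $f(xg)=f(x)$ for all $g\in G$ and all $x\in H$, then for each fixed $x$ the regular function $h\mapsto f(xh)$ agrees with the constant $f(x)$ on the Zariski-dense subset $G$, hence on all of $H$, so $f$ is constant. Thus $\pi^*(\K\{Q\}_0)\subset\K[H]^{G}=\K$; since $\pi^*$ is injective and $\K\{Q\}_0=\K[\overline{Q}]$, we conclude $\K[\overline{Q}]=\K$. So $\overline{Q}$ is a single point, and as $Q$ is contained in its own Zariski closure, $Q$ is trivial, i.e.\ $N=G$.

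The one step that requires care is the compatibility of $\pi^*$ with the degree-zero parts of the $W$-filtrations in part~(ii): this is precisely Proposition~\ref{prop:QuasiIsom} applied to $\pi\colon N\to Q$, and it is why one must first record that both $N$ and $N/G$ are reductive. Everything else is elementary — the triviality of connected finite LDAGs, and the constancy of $G$-invariant regular functions on a connected group in which $G$ is Zariski-dense — so I do not expect any further obstacle.
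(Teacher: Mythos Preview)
Your argument for part~(i) is correct and pleasantly soft: it avoids Cassidy's classification entirely, using only Corollary~\ref{cor:ConnectedGH} and the observation that a finite connected LDAG is a single point. The paper instead reduces to simple $H$ via \cite[Theorem~15]{CassidyClassification}, realises $Z(H)$ as roots of unity through Schur's lemma, and notes that roots of unity are $\Delta$-constants and hence lie in $H\big(\K^{\calE}\big)$.

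Part~(ii), however, has a genuine gap. The assertion that quotients of reductive LDAGs are reductive is false: the logarithmic derivative $y\mapsto \partial y/y$ is a surjective homomorphism of LDAGs $\Gm\to\Ga$ with kernel $\Gm\big(\K^{\partial}\big)$, so $\Gm/\Gm\big(\K^{\partial}\big)\cong\Ga$, which is unipotent. Without reductivity of $Q=N/G$ you cannot invoke Proposition~\ref{prop:QuasiIsom}, and the inclusion $\pi^*(\K\{Q\}_0)\subset\K\{N\}_0$ really can fail in such situations (in the example just given, the coordinate on $\Ga$ pulls back to $\partial x/x$, which does not lie in $\K\{\Gm\}_0=\K[x,x^{-1}]$). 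One can try to bypass this by noting that any simple $Q$-module, pulled back to $N$, is simple and hence sits in degree~$0$ by Lemma~\ref{lem:socV}, so extends to an $H$-module on which the Zariski-dense $G$ acts trivially; but this only shows that every simple $Q$-module is trivial, i.e.\ that $Q$ is unipotent---not that $Q$ is trivial (again, $\Ga$ has only the trivial simple module). The paper's proof of~(ii) proceeds quite differently: via \cite[Theorems~15 and~19]{CassidyClassification} it reduces to simple $H$ with $G=H\big(\K^{\calE}\big)$, and then, for $h\in N_H(G)$ and $\dbar\in\calE$, differentiates $h^{-1}gh\in G$ to see that $\dbar(h)h^{-1}$ centralises $G$, hence $H$; Schur's lemma plus the vanishing of the trace on $\Lie H$ then force $\dbar(h)=0$, so $h\in G$.
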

\begin{proof} \cite[Theorem~15]{CassidyClassification} implies that $$H = H_1\cdot\ldots\cdot H_\ell\quad \text{and}\quad G = G_1\cdot\ldots\cdot G_\ell,$$ where each $H_i$ is a normal simple algebraic subgroup of $H$ with $[H_i,H_j] = 1$ for $i\neq j$ and each $G_i$ is Zariski-dense in $H_i$ and normal in $G$.  Therefore, it is enough to prove the claims when $H$ itself is a simple algebraic group.  In this case, let us assume that $H \subset \GL(V)$, where $H$ acts irreducibly on $V$.  Schur's Lemma implies that the center of $H$ consists of scalar matrices and, since $H=(H,H)$, these matrices have determinant $1$.  Therefore, the matrices are of the form $\zeta I$ where $\zeta $ is a root of unity.    \cite[Theorem~19]{CassidyClassification} states that there  is a Lie $K$-subspace $\calE$ of $\calD$, the $\K$-span of $\Delta$, such that $G$ is conjugate to $H\big(\K^\calE\big)$. Since the roots of unity are constant for any derivation, we have that the center of $H$ lies in $G$.

To prove $N_H(G) = G$, assume $G = H\big(\K^\calE\big)$ and let $g \in G$ and $h\in N_H(G)$.  For any $\dbar \in \calE$, we have
$$ 0 = \dbar\big(h^{-1}gh\big) = -h^{-1}\dbar (h) h^{-1}  g h + h^{-1} g \dbar (h).$$
Therefore, $\dbar (h) h^{-1}$ commutes with the elements of $G$ and so must commute with the elements of $H$. Again by Schur's Lemma, $\dbar (h) h^{-1}$
is a scalar matrix.  On the other hand, $\dbar (h) h^{-1}$ lies in the Lie algebra of $H$ (\cite[Section~V.22, Proposition~28]{Kol}) and so the trace of $\dbar (h) h^{-1}$ is zero.  Therefore, $\dbar (h) h^{-1} = 0$.  Since $\dbar (h) =0$ for all $\dbar \in \calE$, we have $h \in G$.\end{proof}
\end{step}
\begin{step} {\it Computing $G$ when $G$ is connected and semisimple.} We have reduced the problem to calculating the PPV-Galois group $G$ of an equation $\partial Y = AY$ where the entries of $A$ lie in an algebraic extension $F$ of $\K(x)$ and where we know the equations of the PV-Galois $H$ group of this equation over $F$.  Let $$H= H_1\cdot\ldots\cdot H_\ell\quad \text{and}\quad G = G_1\cdot\ldots\cdot G_\ell,$$ where the $H_i$ are simple normal subgroups of $H$ and $G_i$ is Zariski-dense in $H_i$.  Using~\ref{alg:D}, we construct, for each $i$,  an equation $\partial Y = B_iY$ with $B_i \in \Mn_n(F)$  whose PV-Galois group is $H/\bar{H}_i$, where $$\bar{H}_i  = H_1\cdot\ldots\cdot H_{i-1}\cdot H_{i+1}\cdot\ldots\cdot H_\ell$$ and a surjective homomorphism $\pi_i: H \rightarrow H/\bar{H}_i$.  Note that $H/\bar{H}_i$ is a connected simple LAG.  Therefore,~\ref{alg:G} allows us to calculate the PPV-Galois group $\bar{G}_i$ of $\partial Y = B_iY$.  We claim that $$G_i = \pi_i^{-1}\big(\bar{G}_i\big) \cap H_i.$$ To see this, note that $\bar{H}_i \cap H_i$ lies in the center of $H_i$ and, therefore, must lie in $G_i$ by Lemma~\ref{lem:normal}.  Therefore, we have defining equations for each $G_i$ and so can construct defining equations for $G$.
\end{step}

\subsubsection{An algorithm to decide if the PPV-Galois group of a parameterized linear differential equation is reductive.}\label{sec:alg2} Let $\K(x)$ be as in~\eqref{eq:Kx}.  Assume that we are given a differential equation $\partial Y = AY$ with $A \in \Mn_n(\K(x))$.  Using the solution to~\ref{alg:F} above, we may assume that $A$ is in block upper triangular form as in~\eqref{eq:flageqn} with the blocks on the diagonal corresponding to irreducible differential modules. Let $A_{\diag}$ be the corresponding diagonal matrix as in \eqref{eq:diageqn},  let $M, G$ and $M_{\diag},  G_{\diag}$ be the differential modules and PPV-Galois groups associated with $\partial Y = AY $ and $\partial Y = A_{\diag}Y$, respectively.  Of course, $$G_{\diag} \simeq G/\Ru(G),$$ so $G$ is reductive if and only if $G_{\diag} \simeq G$.  

This implies via the Tannakian equivalence that  the differential tensor category generated by $M_{\diag}$ is a subcategory of the differential tensor category generated by $M$ and that $G$ is reductive if and only if these categories are the same.    The differential tensor category generated by a module $M$  is the usual tensor category generated by all the total  prolongations $P^s(M)$ of that module.  

From this, we see that $G$ is a reductive LDAG if and only if $M$ belongs to the tensor category  generated by some total prolongation $P^s(M_{\diag})$.  Therefore, to decide if $G$ is reductive, it suffices to find algorithms to solve problems~\ref{alg:H} and~\ref{alg:I} below.

\begin{algorithm}\label{alg:H}{\it Given differential modules $M$ and $N$, decide if $M$ belongs to the tensor category generated by $N$.} Since we are considering the tensor category and not the {\em differential} tensor category, this is a question concerning nonparameterized differential equations.  Let $K_N,K_M,K_{M\oplus N}$ be PV-extensions associated with the corresponding differential modules and let $G_M, G_N, G_{N\oplus M}$ be the corresponding PV-Galois groups.  The following four conditions are easily seen to be equivalent:
\begin{enumerate}
\item[(a)] $N$ belongs to the tensor category generated by $M$;
\item[(b)] $K_N \subset K_M$ considered as subfields of $K_{M\oplus N}$;
\item[(c)] $K_{M\oplus N} = K_M$;
\item[(d)] the canonical projection $\pi: G_{M\oplus N} \subset G_M \oplus G_N \rightarrow G_M$ is injective (it is always surjective).
\end{enumerate}
Therefore, to solve~\ref{alg:H}, we apply the algorithmic solution of~\ref{alg:B} to calculate $G_{M\oplus N}$ and $G_M$ and, using Gr\"obner bases, decide if  $\pi$ is injective.
\end{algorithm}

\begin{algorithm}\label{alg:I}{\it Given $M$ and $M_{\diag}$ as above, calculate an integer $s$ such that, if $M$ belongs to the differential tensor  category generated by $M_{\diag}$, then $M$ belongs to the tensor category generated by $P^s\big(M_{\diag}\big)$.} We will apply Theorem~\ref{thm:bound} and  Proposition~\ref{prop:FindOrdT}.  Note that, since the PPV-Galois group $G_{\diag}$ associated to $M_{\diag}$ is reductive, Lemma~\ref{lem4} implies that we may apply these results to   $G_{\diag}$. Theorem~\ref{thm:bound}
implies that such a bound is given by the integer 
$$\max\{\Ll(V)-1, \ord(T)\} $$
where $V$ is a solution space associated with $M_{\diag}$ and $T = {Z\big(G_{\diag}^\circ\big)}^\circ$.  As noted in the discussion  preceding Theorem~\ref{thm:bound}, $$\Ll(V) \Le \dim_\K(V) = \dim_{\K(x)} M_{\diag}.$$ Proposition~\ref{prop:FindOrdT} implies that $\ord(T)$ can be bounded in the following way. Using the algorithm to solve~\ref{alg:B}, we calculate the defining equations of the PV-Galois  group $H_{\diag}$ associated with $M_{\diag}$  and then calculate the defining equations of $H_{\diag}^\circ$ and  $\big[H_{\diag}^\circ,H_{\diag}^\circ\big]$ (as in~\ref{alg:A}). Using the solution to~\ref{alg:D}, one calculates a differential equation $\partial Y = BY$ whose PV-Galois group is $$H\big/\big[H_{\diag}^\circ,H_{\diag}^\circ\big].$$ Denote the associated differential module by $N$. Proposition~\ref{prop:FindOrdT} implies that $\ord(T)$ is the smallest value of $t$ so that the differential tensor category generated by $N$ coincides with the tensor category generated by $P^t(N)$. The following conditions are easily seen to be equivalent
\begin{enumerate}
\item[(a)] The differential tensor category generated by $N$ coincides with the tensor category generated by $P^t(N)$.
\item[(b)] The tensor category generated by $P^t(N)$ coincides with the tensor  category generated by $P^{t+1}(N)$.
\item[(c)] $P^{t+1}(N)$ belongs to the tensor category generated by $P^t(N)$.
\end{enumerate}
Therefore, to bound $\ord(T)$, one uses the algorithm of~\ref{alg:H} to check  for $t=0, 1, 2, \ldots$ if $P^{t+1}(N)$ belongs to the tensor category generated by $P^t(N)$ until this event happens (see also \cite[Section~3.2.1,  Algorithm~1]{MiOvSi}). As noted  in the discussion preceding  Theorem~\ref{thm:bound}, this procedure eventually halts. Taking the maximum of this $t$ and $\dim_{\K(x)}M - 1$ yields the desired $s$.
\end{algorithm}

\section{Examples}\label{sec:examples}
\addtocounter{subsection}{1}
In this section, we will illustrate both Theorem~\ref{thm:EqualFiltrations} and our main algorithm. In Example~\ref{ex:sharp}, we will show that the bound in Theorem~\ref{thm:EqualFiltrations} is sharp. Example~\ref{ex:alg} is an illustration of the algorithm.
\begin{example} Following \cite[Ex.~4.18]{MinOvRepSL2}, let $$V =\Span_\K\left\{1, x_{11}'x_{21}-x_{11}x_{21}',x_{11}'x_{22}-x_{21}'x_{12},x_{12}'x_{22}-x_{12}x_{22}',x_{11}'x_{22}-x_{12}'x_{21}\right\} \subset A,$$
where 
\begin{equation}\label{eq:ASL2}
A := \K\{x_{11},x_{12},x_{21},x_{22}\}\big/[x_{11}x_{22}-x_{12}x_{21}-1],
\end{equation}
which induces
the following differential representation of $\SL_2$:
$$
\SL_2(\U) \ni \begin{pmatrix}
a& b\\
c& d
\end{pmatrix} \mapsto
\begin{pmatrix}
1 & a'c-ac'& a'd-bc'& b'd-bd'&a'd'-b'c'\\
0&a^2 & ab & b^2& ab'-a'b \\
0&2ac&ad+bc& 2bd& 2(ad'-bc')\\
0&c^2& cd& d^2&cd'-c'd\\
0&0&0&0&1
\end{pmatrix}
$$
under the right action of $\SL_2$ on $A$.
Since the length of the socle filtration for $V$ is $3$, let $n=2$. Theorem~\ref{thm:EqualFiltrations} claims that $V\in\left\langle P^2\left(V_{\diag}\right)\right\rangle_\otimes$.
We will show that, in fact, 
\begin{equation}\label{eq:VinPdiagV}
V \in \left\langle P\left(V_{\diag}\right)\right\rangle_\otimes.
\end{equation}
Indeed, by the Clebsch--Gordon formula for tensor products of irreducible representations of $\SL_2$, the usual irreducible representation $U=\Span_\K\{u,v\}$ of $\SL_2$  is a direct summand of $V_{\diag}\otimes V_{\diag}$. Moreover, $$V\subset (P(U)\oplus P(U))\otimes(P(U)\oplus P(U))$$ under the embedding  $$U\oplus U\to A,\quad (au+bv,cu+dv) \mapsto ax_{11}+bx_{12}+cx_{21}+dx_{22},$$
which implies~\eqref{eq:VinPdiagV}.
\end{example}
\begin{example}\label{ex:sharp}
Consider the first prolongations $P(V)$
 of the usual (irreducible) representation $r: \SL_2 \to \GL(V)$ of dimension $2$:
$$
P(r) : \SL_2 \ni A \mapsto\begin{pmatrix}
A&A'\\
0&A
\end{pmatrix}.
$$
The length of the socle filtration is $2$, and we tautologically have $$P(V) \in \left\langle P^{2-1}\left(P(V)_{\diag}\right)\right\rangle_{\otimes}.$$ 
Note that $$P(V) \notin {\left\langle P(V)_{\diag}\right\rangle}_\otimes$$
as every object of ${\left\langle  P(V)_{\diag}\right\rangle}_\otimes={\langle V\rangle}_\otimes$ is completely reducible \cite[Thm~4.7]{diffreductive} but $P(V)$ is not completely reducible \cite[Proposition~3]{OvchRecoverGroup}, \cite[Theorem~4.6]{GO}. By Proposition~\ref{prop:EasyInlusion}, for all $n\Ge 0$, 
\begin{equation}\label{eq:Pnsoc}
{P^n(V)}_n \subset \soc^{n+1}P^n(V).
\end{equation}
Since $r^\vee =: \rho : V\to V\otimes_\K A_0$, where $A$ is defined in~\eqref{eq:ASL2}, for all $n\Ge 0$, $$P^n(\rho) : P^n(V)\to P^n(V)\otimes_\K A_n$$
(see~\eqref{eq:defAn}). Therefore, ${P^n(V)}_n = P^n(V)$. Since $P^n(V) \supset \soc^{n+1}P^n(V)$,~\eqref{eq:Pnsoc} implies that
$$
P^n(V) = \soc^{n+1}P^n(V).
$$
Therefore, the length of the socle filtration of $P^n(V)$ does not exceed $n+1$.
If 
\begin{equation}\label{eq:inclusion}
P^{n+1}(V) \in \left\langle P^n(V)\right\rangle_\otimes,
\end{equation} then, for all $q > n$, $P^q(V) \in \left\langle P^n(V)\right\rangle_\otimes$,
which implies that 
\begin{equation}\label{eq:equal}
\left\langle P^i(V)\:\big|\: i \Ge 0\right\rangle_\otimes = \left\langle P^n(V)\right\rangle_\otimes.\end{equation}
By~\cite[Proposition~2.20]{Deligne},~\eqref{eq:equal} implies that $A$ is a finitely generated $\K$-algebra, which is not the case. Therefore,~\eqref{eq:inclusion} does not hold. Thus, the bound in Theorem~\ref{thm:EqualFiltrations} is sharp.
\end{example}

We will now illustrate how the algorithm works.  Let $\Const$ denote the differential closure of $\Q$ with respect to a single derivation $\partial_t$.  In the following examples, we consider the differential equations over the field $\K(x) = \Const(x)$ with derivations $\Delta' = \{ \partial_x,\partial_t\}$ and $\Delta = \{ \partial_t\}$. 
\begin{example}\label{ex:alg} As in \cite[Ex.~3.4]{MiOvSi},
consider the equation $\partial_xY = AY$ where
$$A = \begin{pmatrix}
1& \frac{t}{x}+\frac{1}{x+1}\\
0&1
\end{pmatrix},
$$
whose PV-group is 
\begin{equation}\label{eq:PV1}
\left\{\begin{pmatrix}a & b\\
0& a
\end{pmatrix}\:\Big|\: a,\,b \in \U, a\neq 0\right\} \simeq  \Gm \times \Ga,
\end{equation}
which is not reductive. Let $M$ be the corresponding differential module.
Using our algorithm, we will test whether the PPV-Galois group $G$ of $\partial_xY=AY$ is reductive.
We have 
$$
A_{\diag} = \begin{pmatrix}
1&0\\
0&1
\end{pmatrix},
$$
and the PV and PPV-Galois groups of $\partial_xY=A_{\diag}Y$ are $\Gm$ and $\Gm(\Const)$, respectively; see \cite[Proposition~3.9(2)]{PhyllisMichael}. Therefore, $$\ord \big(G/\Ru(G)\big) =\ord\big(\Gm(\Const)\big) = 1.$$ The matrix of $M\oplus P^1\big(M_{\diag}\big)$ with respect to the appropriate basis is
$$
\begin{pmatrix}
1& \frac{t}{x}+\frac{1}{x+1}&0&0&0&0\\
0&1&0&0&0&0\\
0&0&1&0&0&0\\
0&0&0&1&0&0\\
0&0&0&0&1&0\\
0&0&0&0&0&1
\end{pmatrix},
$$
which is not completely reducible by~\eqref{eq:PV1}. Therefore, its PV group is not isomorphic to $\Gm$, the PV group of $M_{\diag}$. Thus, $G$ is not reductive. In fact, $G$ is calculated in~\cite[Ex.~3.4]{MiOvSi} yielding
$$G= \left\{\left.\begin{pmatrix} e & f\\0&e \end{pmatrix} \in \Gm(\Const)\times \Ga(\Const)\ \right| \ \partial_te= 0, \  \partial_t^2f = 0 \right\}.\qedhere
$$
\end{example}

\begin{example}  Consider the equation 
\begin{equation}\label{eq:sl2gm}
\partial_x^2(y) + 2xt\partial_x(y)+ty=0.
\end{equation}
The PPV-Galois group of this equation lies in $\GL_2$. One can make a standard substitution (\cite[Exc.~1.35.5]{Michael}) resulting in a new equation having PPV-Galois group in $\SL_2$. Once we know the PPV-Galois group of this new equation, results of \cite{Carlos} allow us to construct the PPV-group of the original equation. In our example, the appropriate substitution is   $y= z e^{-\int xt}$.
 We find that $z$ satisfies the equation
\begin{equation}\label{eq:sl2}
\partial_x^2(y) - \big(1/4(2xt)^2+(2xt)'/2-t\big)y = 0\quad\Longleftrightarrow\quad \partial_x^2(y)-(xt)^2y=0,
\end{equation} which now has PPV-Galois group in $\SL_2$, and  $e^{-\int xt}$ satisfies the equation
\begin{equation}\label{eq:gm}
\partial_x(y)+((2xt)/2)y=0 \quad\Longleftrightarrow\quad \partial_x(y)+(xt)y=0,
\end{equation} which has PPV-Galois group in $\GL_1 = \Gm$. We shall refer to Equations~\eqref{eq:sl2} and~\eqref{eq:gm} as the auxiliary equations.
A calculation on {\sc Maple} using the {\tt kovacicsols} procedure of the {\tt DEtools} package shows that the PV Galois group $H$ of~\eqref{eq:sl2} is $\SL_2$. Since, for all $0\ne n\in\ZZ$, $\U(x)$ has no solutions of $$\partial_x(y)+(nxt)y=0,$$ the PV Galois group of~\eqref{eq:gm} is $\Gm$. Therefore, by \cite[Section~3.4]{Carlos}, the PV Galois group of~\eqref{eq:sl2gm} is $$\GL_2 \cong (\SL_2\times\Gm)\big/\{1,-1\}.$$  Hence, the PPV-Galois group $G$ of~\eqref{eq:sl2gm} is of the form
$$
G = (G_1\times G_2)\big/\{1,-1\} \subset H,
$$
where
$G_2$ is Zariski-dense in $\Gm$ and $G_1$ is conjugate in $\GL_2$ either to $\SL_2$ or $\SL_2(\Const)$. We will now calculate $G_1$ and $G_2$. For the former, note that the matrix form of~\eqref{eq:sl2} is
$$
\partial_xY=\begin{pmatrix}
0&1\\
(xt)^2&0
\end{pmatrix}Y.
$$
Since, for the matrix $$
B := \begin{pmatrix}
0&\frac{x}{2t}\\
\frac{tx^3}{2}&\frac{1}{2t}
\end{pmatrix},
$$
which can be found using the {\tt dsolve} procedure of {\sc Maple},
one has $\partial_x(B)-\partial_t(A)=[A,B]$,~\eqref{eq:sl2} is completely integrable and, therefore, $G_1$ is conjugate to $\SL_2(\Const)$. To find $G_2$, compute the first prolongation of~\eqref{eq:gm}:
$$
A_1 := \begin{pmatrix}
-xt&-x\\
0&-xt
\end{pmatrix}.
$$
Setting
$$
C:= \begin{pmatrix}
1&1\\
\frac{-2}{x^2}&\frac{-2}{x^2+t}
\end{pmatrix},
$$
we see that
$$
C^{-1}A_1C - C^{-1}\partial_x(C)=\begin{pmatrix}
\frac{2-x^2 t}{x}&0\\
0&\frac{x(2-x^2 t-t^2)}{x^2+t}
\end{pmatrix}.
$$
Hence, the differential equation corresponding to $A_1$ is completely reducible. Therefore,  $G_2 = \Gm(\Cat)$, that is,
$$
G \cong \GL_2(\Const).
$$ Note that $C$ can be found using the {\tt dsolve} procedure of {\sc Maple}.
\end{example}
\begin{example} Starting with 
\begin{equation}\label{eq:almostgl2}
\partial_x^2(y) - \frac{2t}{x}\partial_x(y)=0,
\end{equation}
the auxiliary equations will be
$$
\partial_x^2(y)-\frac{t(t+1)}{x^2}y=0\quad\text{and}\quad\partial_x(y)=\frac{t}{x}y.
$$
The PPV-Galois group of the latter equation is 
$$
G_2 := \left\{g\in\Gm \ \big| \  (\partial_t^2g)g - (\partial_t g)^2 = 0\right\}.
$$
For the former equation, a calculation using {\tt dsolve} from {\sc Maple} shows that there is no $B \in \Mn_2(\U(x))$ such that $\partial_x(B)-\partial_t(A)=[A,B]$, where
$$
A := \begin{pmatrix}
0&1\\
\frac{t(t+1)}{x^2}&0
\end{pmatrix},
$$
which implies that this equation is not completely integrable. Therefore, $G_1 = \SL_2$. Thus, the PPV-Galois group of~\eqref{eq:almostgl2} is
$$
\left\{g \in \GL_2\:\big|\: \left(\partial_t^2\det(g)\right)\det(g) - (\partial_t \det(g))^2 = 0\right\}.\qedhere
$$ 
\end{example}

\section*{Funding}
A.M. was supported 
by the ISF grant 756/12.
A.O. was supported by the NSF grant CCF-0952591. M.F.S. was supported by the NSF grant CCF-1017217

\setlength{\bibsep}{0.0pt}
\bibliographystyle{abbrvnat}
\small
\bibliography{compreductive}

\end{document}